\documentclass{amsart}
\usepackage[utf8]{inputenc}
\usepackage{amscd,amsmath,amsxtra,amsthm,amssymb,stmaryrd,xr,mathrsfs,mathtools,multirow,hyperref}
\usepackage[all]{xy}

\newtheorem{theorem}{Theorem}[section]
\newtheorem{lemma}[theorem]{Lemma}

\newtheorem{proposition}[theorem]{Proposition}
\newtheorem{corollary}[theorem]{Corollary}

\newtheorem{remark}[theorem]{Remark}

\numberwithin{equation}{section}

\theoremstyle{remark}
\newtheorem*{example}{Example}

\newcommand{\Gal}{\operatorname{Gal}}
\newcommand{\Fil}{\operatorname{Fil}}

\newcommand{\NN}{\mathbb{N}}
\newcommand{\QQ}{\mathbb{Q}}
\newcommand{\RR}{\mathbb{R}}
\newcommand{\Qp}{\mathbb{Q}_p}
\newcommand{\Zp}{\mathbb{Z}_p}
\newcommand{\ZZ}{\mathbb{Z}}

\renewcommand{\AA}{\mathbb{A}}
\newcommand{\FF}{\mathbb{F}}

\newcommand{\Frac}{\mathrm{Frac}}
\newcommand{\ord}{\mathrm{ord}}
\newcommand{\cH}{\mathcal{H}}
\newcommand{\cris}{\mathrm{cris}}
\newcommand{\Bcris}{\mathbb{B}_\cris}
\newcommand{\lb}{[\![}
\newcommand{\rb}{]\!]}

\newcommand{\vp}{\varphi}
\newcommand{\cL}{\mathcal{L}}
\newcommand{\cC}{\mathcal{C}}

\newcommand{\cO}{\mathcal{O}}
\newcommand{\cS}{\mathcal{S}}

\newcommand{\HIw}{H^1_{\mathrm{Iw}}}

\newcommand{\T}{\mathcal{T}}

\newcommand{\AQp}{\AA_{\Qp}^+}
\newcommand{\image}{\mathrm{Im}}
\newcommand{\Dcris}{\mathbb{D}_{\rm cris}}

\newcommand{\Qpn}{\QQ_{p,n}}

\newcommand{\Sel}{\mathrm{Sel}}

\newcommand{\coker}{\mathrm{coker}}

\newcommand{\Tw}{\mathrm{Tw}}
\renewcommand{\Col}{\mathrm{Col}}

\newcommand{\corank}{\mathrm{corank}}
\newcommand{\ur}{\mathrm{ur}}

\newcommand{\Qinf}{\QQ_\infty}
\newcommand{\Qpinf}{\QQ_{\infty,p}}
\newcommand{\Qvinf}{\QQ_{\infty,v}}
\newcommand{\Kinf}{K_\infty}
\newcommand{\Kvinf}{K_{\infty,v}}
\newcommand{\Kpinf}{K_{\infty,p}}
\newcommand{\vpi}{\varpi}
\newcommand{\Afs}{A_{f,s}}
\newcommand{\Ags}{A_{g,s}}
\newcommand{\Tfs}{\T_{f,s}}

\newcommand{\As}{\mathcal{A}_{f,s}}

\begin{document}

\title[Signed Selmer Groups at Nonordinary Primes]{Arithmetic properties of signed Selmer groups at non-ordinary primes}

\author[J.~Hatley]{Jeffrey Hatley}
\address[Hatley]{
Department of Mathematics\\
Union College\\
Bailey Hall 202\\
Schenectady, NY 12308\\
USA}
\email{hatleyj@union.edu}

\author[A.~Lei]{Antonio Lei}
\address[Lei]{D\'epartement de math\'ematiques et de statistique\\
Pavillon Alexandre-Vachon\\
Universit\'e Laval\\
Qu\'ebec, QC, Canada G1V 0A6}
\email{antonio.lei@mat.ulaval.ca}

\begin{abstract}
We extend many results on Selmer groups for elliptic curves and modular forms to the non-ordinary setting. More precisely, we study the signed Selmer groups defined using the machinery of Wach modules over $\Zp$-cyclotomic extensions. First, we provide a definition of residual and non-primitive Selmer groups at non-ordinary primes. This allows us to extend techniques developed by Greenberg (for $p$-ordinary elliptic curves) and Kim ($p$-supersingular elliptic curves) to show that if two $p$-non-ordinary modular forms are congruent to each other, then the Iwasawa invariants of their signed Selmer groups are related in an explicit manner. Our results have several applications. First of all, this allows us to relate the parity of the analytic ranks of such modular forms generalizing a recent result of the first-named author for $p$-supersingular elliptic curves. Second, we can prove a Kida-type formula for the signed Selmer groups generalizing results of Pollack and Weston.
\end{abstract}

\subjclass[2010]{11R18, 11F11, 11R23 (primary); 11F85  (secondary).}
\keywords{Cyclotomic extensions, Selmer groups, modular forms, non-ordinary primes.}

\maketitle

\section{Introduction}\label{section:intro}

\subsection{Overview}\label{section:overview}

Suppose that $\bar{\rho}:G_\QQ \rightarrow \mathrm{GL}_2(\FF)$ is an absolutely irreducible Galois representation, where $\FF$ is a finite field of characteristic $p$. If $\bar{\rho}$ is modular, then we can find many modular forms $f$ such that the associated Galois representation $\rho_f: G_\QQ \rightarrow \mathrm{GL}_2(\bar{\QQ}_p)$ gives rise to $\bar{\rho}$, in the sense that its semisimple reduction mod $p$ is isomorphic to $\bar{\rho}$. For the purposes of this introduction, let us write $\cC(\bar{\rho})$ for the set of all such modular forms. Two modular forms $f,g \in \cC(\bar{\rho})$ are said to be congruent mod $p$.

Suppose that $f,g \in \cC(\bar{\rho})$ are of weight $k=2$ with rational Fourier coefficients, so that they correspond to elliptic curves, and suppose further that they are $p$-ordinary. Let $\Kinf$ be the $\Zp$-cyclotomic extension of $\QQ$. In \cite{greenbergvatsal}, Greenberg and Vatsal study the relation between the Iwasawa invariants of the Selmer groups $\Sel(f/\Kinf)$ and $\Sel(g/\Kinf)$. By defining auxiliary Selmer groups (which they call non-primitive), they are able to show that knowledge of the Iwasawa invariants for $\Sel(f/\Kinf)$ is sufficient to compute those  for $\Sel(g/\Kinf)$. The ideas and techniques in \textit{loc. cit.} were extended to more general modular forms (still $p$-ordinary) in \cite{epw}, in which Emerton, Pollack, and Weston show that Iwasawa invariants are well-behaved in Hida families.

In all of this work, the $p$-ordinary assumption is crucial, as the classical Selmer group  for $f$ at a non-ordinary prime $p$ is not $\Lambda$-cotorsion,  where $\Lambda$ is the Iwasawa algebra $\Zp\lb X\rb$. So, initially it does not even make sense to talk about Iwasawa invariants at such primes. However, Kobayashi \cite{kobayashi03} defined the so-called plus/minus Selmer groups when $f$ corresponds to an elliptic curve with $a_p=0$ and showed that they are $\Lambda$-cotorsion. Subsequently, the results of \cite{greenbergvatsal} were generalized to these plus/minus Selmer groups by Kim in \cite{kim09, kim13}.

Kobayashi's definition of  plus/minus Selmer groups  has been generalized to supersingular elliptic curves with $a_p\ne0$ by Sprung \cite{sprung09} and to general non-ordinary modular forms by Loeffler and Zerbes together with the second-named author in \cite{leiloefflerzerbes10} using the machinery of Wach modules. Indeed,   it  is possible to define two \textit{signed} Selmer groups over $\Kinf$ attached to $f$, which we denote  by $\Sel_i(f/\Kinf)$ in the introduction.  These Selmer groups have been proved  to be $\Lambda$-cotorsion in a large number of cases. Accordingly, at a non-ordinary prime, $f$ has \textit{pairs} of Iwasawa invariants. We shall review the definition of $\Sel_i(f/\Kinf)$ in \S\ref{section:signedselmer}. Along the way, we shall also define the counterparts  over $\QQ$ and the non-primitive version of these groups, which are denoted by $\Sel_i(f/\QQ)$ and $\Sel_i^{\Sigma_0}(f/\Kinf)$ respectively.

The goal of the present paper is to extend the results of Greenberg-Vatsal, Emerton-Pollack-Weston and Kim to modular forms of even weight $k \geq 2$ at non-ordinary primes. Below is a summary of our main results.
\begin{itemize}
\item The signed Selmer groups have no proper $\Lambda$-submodules of finite index (Theorem~\ref{thm:finiteindex}).
\item If $f$ and $g$ are two forms of the same weight inside $\cC(\bar\rho)$,  the $\mu$-invariant of $\Sel_i(f/\Kinf)$ vanishes if and only if that of $\Sel_i(g/\Kinf)$ does. In this case, the $\lambda$-invariants of $\Sel_i^{\Sigma_0}(f/\Kinf)$ and $\Sel_i^{\Sigma_0}(g/\Kinf)$ coincide (Theorem~\ref{thm:sigma-invariants}).
\item Let $f$ and $g$ be as above. There exist some explicitly computable finite sets $\cS_f$ and $\cS_g$ such that
\[
\corank\ \Sel_i(f/\QQ)+|\cS_f|\equiv
\corank\ \Sel_i(g/\QQ)+|\cS_g|\mod2
\]
(Theorem~\ref{thm:main}). Furthermore, if $r_f$ and $r_g$ denote the analytic ranks of $f$ and $g$ respectively, and if $f$ and $g$ have trivial nebentypus, then
\[
r_{\mathrm{an}}(f) + \vert \cS_{f} \vert \equiv r_{\mathrm{an}}(g) + \vert \cS_{g} \vert \mod 2
\]
(Corollary~\ref{cor:parity}).
\item If $F/\QQ$ is a finite extension where $p$ is unramified, we may extend the definition of signed Selmer groups to $\Sel_i(f/F_\infty)$, where $F_\infty$ is the cyclotomic $\Zp$-extension of $F$ (this is the content of \S\ref{sec:urselmer}).
\item A Kida-type formula as proved by Pollack-Weston in \cite{pollackweston06} holds, relating the $\lambda$-invariants of $\Sel_i(f/F'_\infty)$ and $\Sel_i(f/F_\infty)$, where $F'/F$ is a finite Galois $p$-extension (Theorem~\ref{thm:kida}).
\item In the final section, we discuss the various hypotheses imposed in our paper and mention some computational verification of Corollary~\ref{cor:parity}.
\end{itemize}

\subsection{Notation}\label{section:notation}
Throughout this article, $p$ denotes a fixed odd prime.

For each integer $n\ge0$, we write $\Qpn=\Qp(\mu_{p^n})$. Let $\Qinf=\QQ(\mu_{p^\infty})$ and $\Gamma=\Gal(\Qinf/\QQ)=\Gal(\Qp(\mu_{p^\infty})/\Qp)$, which we decompose as $\Gamma_0\times\Delta$, where $\Gamma_0\cong\Zp$ and $\Delta\cong \ZZ/(p-1)\ZZ$. Let $\chi$ be the cyclotomic character on $\Gamma$ and let $\kappa$ and $\omega$ be the restriction of $\chi$ to $\Gamma_0$ and $\Delta$ respectively. Note that $p$ is totally ramified in $\Qinf$. If $K$ is a subfield of $\Qinf$, we shall abuse notation and write $p$ for the unique prime of $K$ lying above $p$.

Let $K_\infty$ be the $\Zp$-cyclotomic extension of $\QQ$ (so $K_\infty=\Qinf^{\Delta}$, with $\Gal(K_\infty/\QQ)\cong\Gamma_0$). For $n\ge0$, we write $K_n$ for the subextension of $K_\infty/\QQ$ of degree $p^n$.
Let $E$ be a fixed finite extension of $\Qp$ with ring of integers $\cO_E$. We write $\Lambda$ for the Iwasawa algebra $\cO_E\lb \Gamma_0\rb$, which may be identified with the power series ring $\cO_E\lb X\rb$. We also write $\Lambda(\Gamma)$ for the Iwasawa algebra of $\Gamma$ over $\cO_E\lb \Gamma\rb$, which we may identify with the power series ring $\cO_E[\Delta]\lb X\rb$.

Given a character $\theta$ on $\Delta$, we write $e_\theta=\frac{1}{p-1}\sum_{\sigma\in\Delta}\theta(\sigma)^{-1}\cdot\sigma$ for the corresponding idempotent element in $\cO[\Delta]$. If $M$ is an $\cO_E\lb \Gamma\rb$-module $M$, we shall write $M^\theta=e_\theta\cdot M$ for its $\theta$-isotypic component. When $\theta$ is the trivial character on $\Delta$, we write $M^\Delta=M^\theta$.

\subsection{Sketch of proofs}

For the reader's convenience, we now briefly outline the main ideas of the arguments which culminate in the proof of our parity result (Corollary~\ref{cor:parity}). It will be clear to the reader that we were greatly influenced by the work of Greenberg-Vatsal \cite{greenbergvatsal}.

We begin in Section \ref{S:signed} by recalling the theory of Wach modules related to $p$-adic representations, especially some of the structure theorems due to Berger. Upon choosing a basis for the Wach module associated to a modular form $f$, we construct Coleman maps
\[
\Col_{f,i}:\HIw(\Qp,\T_f)\rightarrow \Lambda(\Gamma), \quad i=1,2
\]
from the Iwasawa cohomology of $\T_f$ to the Iwasawa algebra, where $\T_f$ is a certain lattice inside Deligne's $p$-adic representation attached to $f$. For any subfield $K \subset \Qinf$, we then define our signed Selmer groups $\Sel_i(f / K)$ in terms of the kernels of these Coleman maps.

Upon relaxing the local conditions away from $p$, we also define \textit{non-primitive} signed Selmer groups $\Sel_i^{\Sigma_0}(f /K)$, and in Section \ref{section:relation-non-primitive} we study the relationship between $\Sel_i(f /K)$ and $\Sel_i^{\Sigma_0}(f /K)$. In particular, we show that their quotient is a finite product of local Galois cohomology groups which are computable in practice (see Corollary \ref{cor:selmerquotient}).

In Section \ref{section:signed-selmer-over-QQ} we study the relation between our signed Selmer groups and the standard Bloch-Kato Selmer groups, showing that our $\Sel_2(f / \QQ)$ is equal to $\Sel_{\mathrm{BK}}(f / \QQ)$ (see Proposition \ref{prop:signed-eq-bk}).

We begin proving our main results in Section \ref{section:submodulesfinite}, where we first establish the non-existence of submodules of finite index (Theorem \ref{thm:finiteindex}). This result is a crucial ingredient in the proofs of Section \ref{section:invariants}, where we prove that the non-primitive signed Selmer groups $\Sel_i^{\Sigma_0}(f / \Qinf)$ are completely determined by the residual Galois representation associated to $f$; therefore, if $f,g \in \mathcal{C}(\bar{\rho})$ then the Iwasawa invariants of their associated signed Selmer groups $\Sel_i(f / \QQ)$ and $\Sel_i(g / \QQ)$ are related by an explicit formula in terms of local Galois cohomology groups by Corollary \ref{cor:selmerquotient}. In Section \ref{section:parity}, we study the dimensions of these local Galois cohomology groups modulo $2$, and in light of Proposition \ref{prop:signed-eq-bk} (which relates our signed Selmer groups to the Bloch-Kato Selmer groups), we invoke proven cases of the Parity Conjecture to obtain our own parity result.

\section{Signed Selmer groups}\label{section:signedselmer}
\subsection{Definition of Coleman maps and signed Selmer groups}\label{S:signed}

Let  $f=\sum a_n(f)q^n$ be a normalised new cuspidal modular eigenform of even weight $k \ge 2$, level $N$ and nebentypus $\varepsilon$.  We assume that $a_n(f)$ is defined over a totally real field for all $n$ and that $f$ is non-ordinary at $p$ (that is, $a_p(f)$ is not a $p$-adic unit).

% and that the Fontaine--Laffaille condition, that is $k\le p$, holds.

We fix a finite extension $E$ of $\Qp$ such that $a_n(f)\in E$ for all $n$ and $\vpi$ a uniformizer of $E$. We let $V_f$ be the $E$-linear Galois representation attached to $f$ constructed by Deligne \cite{deligne69}. Then $V_f$ has Hodge--Tate weights $\{0, 1-k\}$, where our convention  is that the Hodge--Tate weight of the cyclotomic character is $1$.  Let $T_f$ be the canonical $G_{\QQ}$-stable $\cO_E$-lattice in $V_f$ defined by Kato \cite[\S8.3]{kato04}. Let $A_f=V_f/T_f(1)$ and $\T_f$  the Tate twist $T_f(k-1)$, which has Hodge--Tate weights $\{0,k-1\}$. There is a perfect pairing
\[
A_f\times \T_{f^*}\rightarrow\mu_{p^\infty},
\]
where $f^*$ is the modular form whose Fourier coefficients are given by the complex conjugation of those of $f$ (see for example \cite[\S14.10]{kato04}). Since we assume that $a_n(f)\in\RR$ for all $n$, we have in fact a perfect pairing
\[
A_f\times \T_{f}\rightarrow\mu_{p^\infty}.
\]

Let $\AQp=\Zp\lb\pi\rb$, which is equipped with the Frobenius map $\vp(\pi)=(1+\pi)^p-1$ and an action by $\Gamma$ given by $\gamma\cdot \pi=(1+\pi)^{\chi(\gamma)}-1$ for $\gamma\in\Gamma$.
For a free rank-$d$ $\cO_E$-module $T$ equipped with a continuous action by $G_{\Qp}$, we write $\NN(T)$ and $\Dcris(T)$ for the Wach module and the Dieudonn\'e module of $T$ (c.f. \cite{berger03,berger04}).
We write
\[
\HIw(\Qp,T)=\varprojlim H^1(\Qpn,T)
\]
where the connecting maps are corestrictions.

 Recall the canonical $\Lambda(\Gamma)$-isomorphism
\begin{equation}\label{eq:Herr}
\HIw(\Qp,\T_f)\cong \NN(\T_f)^{\psi=1}
\end{equation}
proved by Berger \cite{berger03}, where $\psi$ is a left-inverse of $\vp$ as described in \cite[\S I.2]{berger03}. By an abuse of notation, we shall identify the two modules with each other. The Wach module $\NN(\T_f)$ is free of rank $2$ over $\cO_E\otimes\AQp$. We fix a basis $n_1$, $n_2$.
Via the $\Lambda(\Gamma)$-homomorphism
\[
1-\vp:\NN(\T_f)^{\psi=1}\rightarrow \vp^*\NN(\T_f)^{\psi=0}
\]
and the fact that $\vp^*\NN(\T_f)^{\psi=0}$ is a free $\Lambda(\Gamma)$-module of rank $2$ generated by the elements $(1+\pi)\vp(n_1)$ and $(1+\pi)\vp(n_2)$, we define the Coleman maps
\[
\Col_{f,i}:\HIw(\Qp,\T_f)\rightarrow \Lambda(\Gamma)
\]
for $i=1,2$, given by the relation
\[
(1-\vp)(z)=\Col_{f,1}(z)\cdot (1+\pi)\vp(n_1)+\Col_{f,2}(z)\cdot (1+\pi)\vp(n_2).
\]

If $T$ is an $\cO_E$-linear representation of $G_{\Qp}$, we write
$$\cL_T:\HIw(\Qp,T)\rightarrow \Frac(\cH_E(\Gamma))\otimes \Dcris(T)$$
for Perrin-Riou's big logarithm map, where $\cH_E(\Gamma)$ is the algebra of $E$-valued distributions on $\Gamma$. When the Hodge-Tate weights of $T$ are non-negative (e.g. $\T_f$), this map lands inside $\cH_E(\Gamma)\otimes \Dcris(T)$.

For every integer $m$, we have the Tate twist $T(m)=T\cdot e_m$, where  $e_m$ is a basis on which $\Gamma$ acts via $\chi^m$. Recall that there is an element $t$ in Fontaine's ring $\textbf{B}_{\mathrm{dR}}$ such that $\varphi(t)=pt$ and $\gamma \cdot t = \chi(\gamma)t$ for all $\gamma \in \Gamma$. We have the natural maps
\[
\HIw(\Qp,T(-m))\stackrel{e_{m}}{\longrightarrow} \HIw(\Qp,T),\quad \Dcris(T)\xrightarrow{e_{-m}t^{m}} \Dcris(T(-m))
\]
and
\begin{equation}\label{eq:twist}
\cL_{T(-m)}(z)=(\ell_{-1}\cdots \ell_{-m})^{-1}\cdot\Tw_{-m}\circ \cL_T(z\cdot e_m)\otimes e_{-m}t^{m}
\end{equation}
for all $z\in \HIw(\Qp,T(-m))$. Here, $\ell_i=\log(\gamma)/\log(\chi(\gamma))-i$ and $\Tw_{-m}$ denotes the $E$-linear map that sends $\sigma\in\Gamma$ to $\chi^m(\sigma)\sigma$.

Let $\nu_1,\nu_2$ be the basis of $\Dcris(\T_f)$ obtained from $n_1,n_2$ modulo $\pi$. The Coleman maps defined above give rise to the decomposition
\begin{equation}\label{eq:decomp}
\cL_{\T_f}=\begin{pmatrix}
\nu_1&\nu_2
\end{pmatrix}
\cdot M\cdot
\begin{pmatrix}
\Col_{f,1}\\ \Col_{f,2}
\end{pmatrix}
\end{equation}
for some $2\times 2$ logarithmic matrix $M$ that is defined over $\cH_E(\Gamma)$ (see \cite[\textsection 3A]{leiloefflerzerbes11}). We shall choose $n_1,n_2$ so that $\nu_1$ is an $\cO_E$-basis of $\Fil^0\Dcris(\T_f)$ and $\vp(\nu_1)=\nu_2$ \cite[Lemma 3.1]{leiloefflerzerbes15}.
Let $v_i=\nu_i\otimes e_{2-k}t^{k-2}\in\Dcris(T_f(1))$. Then $v_1, v_2$ is a basis of $\Dcris(T_f(1))$. Under the crystalline pairing
\[
\langle\sim,\sim\rangle_{\cris}:\Dcris(\T_f)\times \Dcris(T_f(1))\rightarrow\Dcris(\cO_E(1))\cong \cO_E,
\]
we have
\begin{equation}\label{eq:pairbasis}
\langle\nu_i,v_i\rangle_\cris=0,\ i=1,2,\quad \langle\nu_1,v_2\rangle_\cris=-\langle\nu_2,v_1\rangle_\cris
\end{equation}
(see \cite[p.494]{leiloefflerzerbes10}).

The following is a generalization of \cite[Lemma~3.2]{leiponsinet}.

\begin{lemma}\label{lem:orth}
Let
\begin{align*}
\langle\sim,\sim\rangle:\HIw(\Qp,\T_f)\times\HIw(\Qp,T_f(1))&\rightarrow \Lambda(\Gamma)\\
((x_n),(y_n))&\mapsto\varprojlim \sum_{\sigma\in\Gamma/\Gamma^{p^n}}\langle x_n,y_n^\sigma\rangle_n,
\end{align*}
where $\langle\sim,\sim\rangle_n$ is the Tate pairing
\[
H^1(\Qpn,\T_f)\times H^1(\Qpn,T_f(1))\rightarrow \cO_E.
\]
Under this pairing, $\ker\Col_{f,i}\subset \HIw(\Qp,\T_f)$ is the orthogonal complement of its natural image in $\HIw(\Qp,T_f(1))$ under the natural map
$$
\HIw(\Qp,\T_f)\stackrel{e_{2-k}}\rightarrow\HIw(\Qp,T_f(1)).
$$
\end{lemma}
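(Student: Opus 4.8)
The plan is to split the statement into an inclusion and a rank-theoretic argument. Concretely, I would first prove
\[
\ker\Col_{f,i}\ \subseteq\ \bigl(e_{2-k}(\ker\Col_{f,i})\bigr)^{\perp}
\]
and then upgrade this to an equality. For the upgrade: under the standing hypotheses, $\HIw(\Qp,\T_f)$ and $\HIw(\Qp,T_f(1))$ are $\Lambda(\Gamma)$-torsion-free of rank $2$ (for instance $H^0(\Qpn,\T_f)=0$ as $f$ is non-ordinary at $p$), and the Iwasawa-theoretic local Tate pairing $\langle\sim,\sim\rangle$ is perfect. Since $\Col_{f,i}$ is a nonzero $\Lambda(\Gamma)$-linear map into $\Lambda(\Gamma)$, its kernel is saturated of rank $1$ (the quotient embeds in $\Lambda(\Gamma)$); as $e_{2-k}$ is an isomorphism, $e_{2-k}(\ker\Col_{f,i})$ has rank $1$, so perfectness forces $\bigl(e_{2-k}(\ker\Col_{f,i})\bigr)^{\perp}$ to have rank $1$ as well. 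Granting the displayed inclusion, the quotient $\bigl(e_{2-k}(\ker\Col_{f,i})\bigr)^{\perp}/\ker\Col_{f,i}$ is then $\Lambda(\Gamma)$-torsion and injects into $\HIw(\Qp,\T_f)/\ker\Col_{f,i}$, which is torsion-free; hence it vanishes and equality holds.

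The real content is the inclusion, which I would obtain from an explicit formula for $\langle z, e_{2-k}(y)\rangle$, $z,y\in\HIw(\Qp,\T_f)$, in terms of the Coleman maps. Using Perrin-Riou's explicit reciprocity law, the pairing $\langle z,w\rangle$ for $z\in\HIw(\Qp,\T_f)$, $w\in\HIw(\Qp,T_f(1))$ is computed from the big logarithms $\cL_{\T_f}(z)$, $\cL_{T_f(1)}(w)$ paired through the crystalline pairing $\langle\sim,\sim\rangle_\cris$. Specializing to $w=e_{2-k}(y)$, the twisting formula \eqref{eq:twist} with $m=k-2$ rewrites $\cL_{T_f(1)}(e_{2-k}(y))$ in terms of $\cL_{\T_f}(y)$, the twist operator $\Tw_{-(k-2)}$, the identities $v_i=\nu_i\otimes e_{2-k}t^{k-2}$, and a factor $(\ell_{-1}\cdots\ell_{-(k-2)})^{-1}$ tailored to cancel the analogous factor in the reciprocity law. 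Substituting the decomposition \eqref{eq:decomp} (through the Coleman maps and the logarithmic matrix $M$) and invoking the anti-diagonal shape of the crystalline pairing recorded in \eqref{eq:pairbasis}, namely $\langle\nu_i,v_i\rangle_\cris=0$ and $\langle\nu_1,v_2\rangle_\cris=-\langle\nu_2,v_1\rangle_\cris$, one reaches an expression of the form
\[
\langle z, e_{2-k}(y)\rangle=u\cdot\bigl(\Col_{f,1}(z)\cdot\Tw(\Col_{f,2}(y))^{\iota}-\Col_{f,2}(z)\cdot\Tw(\Col_{f,1}(y))^{\iota}\bigr)
\]
for some $u\in\Lambda(\Gamma)$, with $\iota$ the involution of $\Lambda(\Gamma)$ and $\Tw$ an automorphism; since neither $\iota$ nor $\Tw$ affects the vanishing locus of a Coleman map, the right-hand side is zero whenever $z,y\in\ker\Col_{f,i}$, which is the claimed inclusion. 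A cleaner variant avoids $\cL$ altogether: by Berger's comparison, $\langle\sim,\sim\rangle$ corresponds to the natural pairing on $\NN(\T_f)^{\psi=1}\times\NN(T_f(1))^{\psi=1}$; one then writes $(1-\vp)z$ and $(1-\vp)(e_{2-k}(y))$ in the bases $(1+\pi)\vp(n_j)$ and $(1+\pi)\vp(n_j^{*})$ (with $n_j^{*}$ the image of $n_j$ under the twisting isomorphism of Wach modules, so that $n_j^{*}\equiv v_j\bmod\pi$) and uses that this pairing is anti-diagonal on the pairs $\bigl((1+\pi)\vp(n_j),(1+\pi)\vp(n_j^{*})\bigr)$ by \eqref{eq:pairbasis}.

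The step I expect to be the main obstacle is the normalization bookkeeping in that formula: one must keep careful track of the unit $u$, the involution $\iota$, and especially the product of $\ell_j$-factors that has to cancel between \eqref{eq:twist} and the explicit reciprocity law, and verify that the anti-diagonal shape of \eqref{eq:pairbasis} persists after applying $1-\vp$ and conjugating by the logarithmic matrix $M$ (equivalently, that $M^{t}JM^{\iota}$, with $J$ the anti-diagonal matrix from \eqref{eq:pairbasis}, is again anti-diagonal up to the expected $\ell$-factor). Computations of exactly this type appear in \cite{leiloefflerzerbes11} and, for abelian varieties, in \cite{leiponsinet}; I would follow those, adapting them to the Tate twist $e_{2-k}$ and the Hodge--Tate weight $k-1$ of $\T_f$.
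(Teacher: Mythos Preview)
Your approach is essentially that of the paper: both reduce the pairing $\langle x,y\rangle$ to the crystalline pairing of big logarithms via Perrin-Riou's explicit reciprocity law, then use the decomposition \eqref{eq:decomp} and the anti-diagonal relations \eqref{eq:pairbasis} to express the result as (a twist of) $\det(M)$ times the $2\times 2$ determinant $\Col_{f,1}(x)\Col_{f,2}(y\cdot e_{k-2})-\Col_{f,2}(x)\Col_{f,1}(y\cdot e_{k-2})$.

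The only real difference is in the endgame. The paper twists both arguments symmetrically to $T_f(k/2)$ and reads off the full ``if and only if'' directly from the determinant formula: $\langle x,y\rangle=0$ for \emph{all} $x\in\ker\Col_{f,i}$ forces $\Col_{f,i}(y\cdot e_{k-2})=0$, since $\Col_{f,j}$ does not vanish identically on $\ker\Col_{f,i}$ for $j\ne i$. You instead prove only the inclusion from the formula and then upgrade via a rank/saturation argument. Both are valid; the paper's route is a line shorter and avoids invoking perfectness of the $\Lambda(\Gamma)$-adic pairing, while yours is more robust to normalization issues (you never need your constant $u$ to be nonzero). One small correction: your $u$ is essentially $\det(M)$ up to twist, which lives in $\cH_E(\Gamma)$ rather than $\Lambda(\Gamma)$, but as you only use the inclusion this is harmless.
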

\begin{proof}
Let $x\in \HIw(\Qp,\T_f)$ and $y\in\HIw(\Qp,T_f(1))$.
By \cite[Lemme~3.6.1(ii)]{perrinriou94}, $\langle x,y\rangle=0$ if and only if $\langle x\cdot e_{(2-k)/2},y\cdot e_{(k-2)/2}\rangle =0$.
The explicit reciprocity law of Perrin-Riou from \cite[\S3.6.4]{perrinriou94} (as proved in \cite[Theorem~B.6]{loefflerzerbes14}) tells us that this is equivalent to
\[
\langle \cL_{T_f(k/2)}(x\cdot e_{(2-k)/2}),\cL_{T_f(k/2)}(y\cdot e_{(k-2)/2})\rangle_\cris=0,
\]
where the pairing $\langle\sim,\sim\rangle_\cris$ is extended $\Frac(\cH_E(\Gamma))$-linearly.
From \eqref{eq:twist}, \eqref{eq:decomp}, and \eqref{eq:pairbasis}, we deduce that this is the same as saying
\[
\Tw_{(2-k)/2}\det\left(M\cdot\begin{pmatrix}
\Col_{f,1}(x)&\Col_{f,1}(y\cdot e_{k-2})\\
\Col_{f,2}(x)&\Col_{f,2}(y\cdot e_{k-2})
\end{pmatrix}\right)=0.
\]
This shows that $\langle x,y\rangle =0$ for all $x\in \ker\Col_{f,i}$ if and only if $y\cdot e_{k-2}\in\ker\Col_{f,i}$. Hence the result.
\end{proof}

By Tate duality, we have the pairing
\begin{equation}\label{eq:finitepairing}
H^1(K,\T_f)\times H^1(K,A_f)\rightarrow \Qp/\Zp
\end{equation}
for all finite extensions $K/\Qp$ contained inside $\Qpinf$. %From now on, we shall assume the following hypothesis holds.

%\begin{itemize}
%\item[\textbf{(NT)}] $A_f^{G_{\Qpinf}}=0$
%\end{itemize}

For the rest of the paper, we make the following two hypotheses.

\begin{itemize}
\item[\textbf{(irred)}] The $G_\QQ$-representation $T_f/\vpi T_f$ is irreducible.

\item[\textbf{(inv)}] For all $m \geq 0$,  $A_f(m)^{G_{\Qpinf}}=0$.
\end{itemize}
\begin{remark}
The first hypothesis is very mild, since all but finitely many of the residual $G_\QQ$-representations associated to $f$ are irreducible (c.f. Lemma~\ref{lem:irred} below). The second hypothesis also holds in many cases. For instance, if $k \leq p$ then this follows from the proof of \cite[Lemma 4.4]{lei09}.
\end{remark}

\begin{lemma}\label{lem:injsurj}
If $K/\Qp$ is a finite  Galois extension contained inside  $\Qpinf$, then
the natural projection map $\HIw(\Qp,\T_f)\rightarrow H^1(K,\T_f)$ is surjective, whereas the restriction map $H^1(K,A_f)\rightarrow H^1(\Qpinf,A_f)$ is injective.
\end{lemma}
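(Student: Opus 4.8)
The plan is to deduce both assertions from hypothesis \textbf{(inv)} with $m=0$, the surjectivity statement being essentially the Tate dual of the injectivity statement applied at finite levels. As a preliminary remark, since $\Qpinf=\Qp(\mu_{p^\infty})$ is abelian over $\Qp$ with $\Gal(\Qpinf/\Qp)\cong\Zp^\times$, every finite $K$ with $\Qp\subseteq K\subseteq\Qpinf$ is automatically Galois over $\Qp$, and $\Gal(\Qpinf/K)$ is an open subgroup of $\Zp^\times$, so $K\subseteq\Qpn$ for all sufficiently large $n$.

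For the injectivity of the restriction map $H^1(K,A_f)\to H^1(\Qpinf,A_f)$, the inflation--restriction exact sequence attached to $1\to G_{\Qpinf}\to G_K\to\Gal(\Qpinf/K)\to1$ and the $G_K$-module $A_f$ identifies its kernel with $H^1\!\left(\Gal(\Qpinf/K),\,A_f^{G_{\Qpinf}}\right)$. By \textbf{(inv)} we have $A_f^{G_{\Qpinf}}=0$, so this group vanishes. The same argument, with $\Qpinf$ replaced by any intermediate field $F$ that is finite over $\Qp$ and contains $K$, shows that the restriction map $H^1(K,A_f)\to H^1(F,A_f)$ is also injective, since $A_f^{G_F}\subseteq A_f^{G_{\Qpinf}}=0$.

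For the surjectivity of $\HIw(\Qp,\T_f)\to H^1(K,\T_f)$, recall from \eqref{eq:finitepairing} that for every finite $K\subseteq F\subseteq\Qpinf$ there is a perfect local Tate pairing between $H^1(F,\T_f)$ and $H^1(F,A_f)$ (and likewise for $K$), under which corestriction $\cor\colon H^1(F,\T_f)\to H^1(K,\T_f)$ and restriction $H^1(K,A_f)\to H^1(F,A_f)$ are adjoint. Since the latter is injective by the previous paragraph, $\cor$ is surjective; in particular the transition maps in the inverse system $\{H^1(\Qpn,\T_f)\}_n$ defining $\HIw(\Qp,\T_f)=\varprojlim_n H^1(\Qpn,\T_f)$ are surjective. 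As each $H^1(\Qpn,\T_f)$ is a finitely generated, hence compact, $\cO_E$-module, the inverse limit surjects onto each term. Finally, fixing $n$ with $K\subseteq\Qpn$, the natural projection $\HIw(\Qp,\T_f)\to H^1(K,\T_f)$ factors as $\HIw(\Qp,\T_f)\twoheadrightarrow H^1(\Qpn,\T_f)\xrightarrow{\cor}H^1(K,\T_f)$, where the second map is surjective by the adjunction argument with $F=\Qpn$; hence the composite is surjective.

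Essentially all the weight is carried by hypothesis \textbf{(inv)}; the remaining ingredients --- inflation--restriction, the adjointness of corestriction and restriction under local Tate duality, and the fact that an inverse limit of surjections of compact modules surjects onto each term --- are entirely formal, so I do not expect a genuine obstacle. The one point that deserves a word of care is the reduction ``$K\subseteq\Qpn$ for $n\gg0$'', which relies on finite-index subgroups of $\Zp^\times$ being open.
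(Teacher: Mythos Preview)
Your proof is correct and follows essentially the same route as the paper: both arguments use \textbf{(inv)} together with inflation--restriction to obtain injectivity of the restriction maps on $A_f$-cohomology, and then deduce surjectivity of the corestriction maps on $\T_f$-cohomology via local Tate duality. The only minor difference is in the passage to the limit: you invoke compactness of the terms (Mittag--Leffler) to conclude that the inverse limit surjects onto each $H^1(\Qpn,\T_f)$, whereas the paper dualises once more at the limit level, identifying the Pontryagin dual of the projection $\HIw(\Qp,\T_f)\to H^1(K,\T_f)$ with the already-established injection $H^1(K,A_f)\hookrightarrow H^1(\Qpinf,A_f)$.
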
\begin{proof}
If $K\subset L\subset \Qpinf$ are Galois extensions of $\Qp$, then \textbf{(inv)} together with the inflation-restriction exact sequence tells us that the restriction map $H^1(K,A_f)\rightarrow H^1(L,A_f)$ is injective. Hence, the corestriction map $H^1(L,\T_f)\rightarrow H^1(K,\T_f)$ is surjective. Since the inverse limit is left-exact, we see that $H^1(K,A_f)\rightarrow H^1(\Qpinf,A_f)$ is injective, and by duality we have that $\HIw(\Qp,\T_f)\rightarrow H^1(K,\T_f)$ is surjective.
\end{proof}

For each finite Galois extension $K/\Qp$ that is contained inside $\Qpinf$, we define
\[
H^1_i(K,\T_f)\subset H^1(K,\T_f)
\]
to be the image of $\ker\Col_{f,i}$ under the natural map $\HIw(\Qp,\T_f)\rightarrow H^1(K,\T_f)$. Similarly, we define $H^1_i(K,T_f(1))$ to be the image of $\ker\Col_{f,i}\cdot e_{2-k}$ under the natural projection map $\HIw(\Qp,T_f(1))\rightarrow H^1(K,T_f(1))$.

We have a natural exact sequence
\begin{equation}\label{eq:K-short-exact-sequence}
0 \rightarrow H^1(K,T_f(1)) \xrightarrow{\iota_K} H^1(K,V_f(1)) \xrightarrow{\pi_K} H^1(K,A_f) \rightarrow 0,
\end{equation}
where the injectivity of $\iota_K$ follows from \textbf{(inv)}, and the surjectivity of $\pi_K$ follows by the same lemma combined with Tate duality. We define $H^1_i(K,V_f(1))$ to be the $E$-vector space generated by the image of $H^1_i(K,T_f(1))$ under $\iota_K$, and we define $H^1_i(K,A_f)$ to be the image of $H^1_i(K,V_f(1))$ under $\pi_K$. In particular, we have the isomorphism
\[
H^1_i(K,A_f)\cong H^1_i(K,V_f(1))/H^1_i(K,T_f(1))\cong H^1_i(K,A_f)\otimes E/\cO_E.
\]

\begin{lemma}\label{lem:orthfinite}
Under \eqref{eq:finitepairing}, $H^1_i(K,\T_f)$ and $H^1_i(K,A_f)$ are orthogonal complements of each other.
\end{lemma}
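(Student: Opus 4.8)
The plan is to deduce the finite-level orthogonality from the Iwasawa-theoretic orthogonality established in Lemma~\ref{lem:orth}, transported across Tate duality via the surjectivity/injectivity statement of Lemma~\ref{lem:injsurj}. First I would recall the compatibility between the Iwasawa-theoretic pairing
\[
\langle\sim,\sim\rangle:\HIw(\Qp,\T_f)\times\HIw(\Qp,T_f(1))\rightarrow \Lambda(\Gamma)
\]
and the finite Tate pairings $\langle\sim,\sim\rangle_K$ at each level: evaluating $\langle x,y\rangle$ at the trivial character (equivalently, applying the augmentation map $\Lambda(\Gamma)\to\cO_E$ after projecting to the appropriate layer) recovers $\sum_{\sigma\in\Gal(K/\Qp)}\langle x_K,y_K^\sigma\rangle_K$, where $x_K,y_K$ are the images in $H^1(K,\T_f)$ and $H^1(K,T_f(1))$. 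Since $\ker\Col_{f,i}$ is a $\Lambda(\Gamma)$-submodule, Lemma~\ref{lem:orth} gives that $\langle x,y\rangle=0$ as an element of $\Lambda(\Gamma)$ whenever $x\in\ker\Col_{f,i}$ and $y\in\ker\Col_{f,i}\cdot e_{2-k}$; in particular all its specializations vanish, so $H^1_i(K,\T_f)$ and the image of $\ker\Col_{f,i}\cdot e_{2-k}$ in $H^1(K,T_f(1))$ pair to zero. This is the "one inclusion" half, and by Lemma~\ref{lem:injsurj} the relevant images generate $H^1_i(K,\T_f)$ and $H^1_i(K,T_f(1))$ respectively.

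Next I would convert the statement about $T_f(1)$ into one about $A_f$. Using the short exact sequence \eqref{eq:K-short-exact-sequence} and the definition of $H^1_i(K,A_f)$ as $\pi_K(H^1_i(K,V_f(1)))$, together with the fact that the Tate pairing \eqref{eq:finitepairing} on $H^1(K,\T_f)\times H^1(K,A_f)$ is the one induced from the pairing on $H^1(K,\T_f)\times H^1(K,T_f(1))$ with values in $\cO_E$ followed by $\cO_E\to\Qp/\Zp$ (compatibly, after $\otimes E$, with the pairing into $E$), the vanishing established above shows $H^1_i(K,\T_f)\perp H^1_i(K,A_f)$. So $H^1_i(K,A_f)$ is contained in the orthogonal complement $H^1_i(K,\T_f)^{\perp}$.

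For the reverse inclusion — that is, that $H^1_i(K,\T_f)^{\perp}$ is no larger than $H^1_i(K,A_f)$ — I would run a dimension (or, more precisely, length/corank) count. The Tate pairing \eqref{eq:finitepairing} is perfect, so $\dim_E H^1_i(K,\T_f)^{\perp}\otimes E + \dim_E H^1_i(K,\T_f)\otimes E = \dim_E H^1(K,V_f(1))$, using that $H^1(K,\T_f)\otimes E\cong H^1(K,V_f(1))$ (by \textbf{(inv)}). On the other hand, the isomorphism $H^1_i(K,A_f)\cong H^1_i(K,V_f(1))/H^1_i(K,T_f(1))$ recorded just before the lemma, plus the exactness of \eqref{eq:K-short-exact-sequence}, gives $\mathrm{corank}\, H^1_i(K,A_f) = \dim_E H^1(K,V_f(1)) - \dim_E H^1_i(K,V_f(1))$. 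It therefore suffices to check $\dim_E H^1_i(K,V_f(1)) = \dim_E H^1_i(K,\T_f)\otimes E$, i.e. that $e_{2-k}$ induces an isomorphism $H^1_i(K,\T_f)\otimes E \xrightarrow{\sim} H^1_i(K,V_f(1))$; but $e_{2-k}$ is an isomorphism $H^1(K,V_f)\to H^1(K,V_f(1))$ of $E$-vector spaces, so this is immediate. Combining, both sides of the claimed orthogonality have complementary dimensions inside the perfect pairing, and the inclusion from the previous paragraph forces equality.

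The main obstacle I anticipate is not any single deep input but the bookkeeping in the second and third paragraphs: making sure the various pairings (the $\Lambda(\Gamma)$-valued Iwasawa pairing, the level-$K$ Tate pairings with values in $\cO_E$, and the $\Qp/\Zp$-valued pairing \eqref{eq:finitepairing}) are normalized compatibly so that "$\ker\Col_{f,i}$ is orthogonal to its image under $e_{2-k}$" descends cleanly to each finite layer, and that the twist by $e_{2-k}$ is tracked consistently on both sides. Once those compatibilities are pinned down, the argument is a formal consequence of Lemma~\ref{lem:orth}, Lemma~\ref{lem:injsurj}, the exact sequence \eqref{eq:K-short-exact-sequence}, and the perfectness of Tate duality.
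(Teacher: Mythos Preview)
Your first inclusion $H^1_i(K,A_f)\subset H^1_i(K,\T_f)^{\perp}$ is correct and matches the paper. The reverse inclusion, however, does not go through as written: the formula
\[
\corank\, H^1_i(K,A_f) = \dim_E H^1(K,V_f(1)) - \dim_E H^1_i(K,V_f(1))
\]
is wrong. From the isomorphism $H^1_i(K,A_f)\cong H^1_i(K,V_f(1))/H^1_i(K,T_f(1))$ one reads off $\corank\, H^1_i(K,A_f) = \dim_E H^1_i(K,V_f(1))$; what you wrote is the corank of the \emph{quotient} $H^1_{/i}(K,A_f)$. With the corrected formula, your dimension count collapses to the requirement $2\dim_E H^1_i(K,V_f(1)) = \dim_E H^1(K,V_f(1))$, i.e.\ that $H^1_i$ is half-dimensional. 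But that is precisely (the rational version of) the statement to be proved, so the argument is circular: nothing you have used so far rules out $H^1_i(K,\T_f)^{\perp}$ being strictly larger than $H^1_i(K,A_f)$.

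The paper closes this gap not by a dimension count but by invoking Lemma~\ref{lem:orth} a second time, now using its full strength as an \emph{equality} of orthogonal complements at the Iwasawa level rather than just the inclusion $\ker\Col_{f,i}\perp \ker\Col_{f,i}\cdot e_{2-k}$. Concretely: if $y\in H^1_i(K,\T_f)^{\perp}$, one lifts (via Lemma~\ref{lem:injsurj} and the exact sequence \eqref{eq:K-short-exact-sequence}) to the Iwasawa tower, where Lemma~\ref{lem:orth} forces the lift to lie in $\ker\Col_{f,i}\cdot e_{2-k}$, and then one pushes back down. Your ``one inclusion'' use of Lemma~\ref{lem:orth} discards exactly the information needed to finish.
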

\begin{proof}
Let $\mathcal{Y} \subset H^1(K,A_f)$ be the orthogonal complement of $H^1_i(K,\T_f)$ under \eqref{eq:finitepairing}. Then $H^1_i(K,A_f) \subset \mathcal{Y}$ by Lemma \ref{lem:orth} and the bilinearity of the Tate pairing. The reverse containment follows from Lemma \ref{lem:orth} and the exactness of \eqref{eq:K-short-exact-sequence}.
\end{proof}

\begin{remark}\label{rem:restrictedHi}
Lemma~\ref{lem:injsurj} tells us that $\HIw(\Qp,\T_f)_{\Gal(\Qpinf/K)}=H^1(K,\T_f)$ and $H^1(\Qpinf,A_f)^{\Gal(\Qpinf/K)}=H^1(K,A_f)$. Hence, $(\ker\Col_{f,i})_{\Gal(\Qpinf/K)}=H^1_i(K,\T_f)$ and $H^1_i(\Qpinf,A_f)^{\Gal(\Qpinf/K)}=H^1_i(K,A_f)$.
\end{remark}

On taking inverse and direct limits in \eqref{eq:finitepairing}, we have the pairing
\begin{equation}\label{eq:pairing}
[\bullet,\bullet]:\HIw(\Qp,\T_f)\times H^1(\Qpinf,A_f)\rightarrow\Qp/\Zp.
\end{equation}

We define $H^1_i(\Qpinf,A_f)$ to be the direct limit $\varinjlim H^1_i(K,A_f)$. Then  $\ker\Col_{f,i}$ and $H^1_i(\Qpinf,A_f)$ are orthogonal complements of each other under \eqref{eq:pairing} by Lemma~\ref{lem:orthfinite}.

 If $v$ is a place of $\Qinf$ that does not divide $p$, we define
$$H^1_i(\Qvinf,A_f)=H^1_\ur(\Qvinf,A_f),$$ that is, the unramified subgroup of
$H^1(\Qvinf,A_f)$. We shall use the notation $H^1_{/i}(\cdots)$ to denote that quotient $H^1(\cdots)/H^1_i(\cdots)$. We  define the signed Selmer group over $\Qinf$ by
\[
\Sel_i(A_f/\Qinf)=\ker\left(H^1(\Qinf,A_f)\rightarrow\prod_v H^1_{/i}(\Qvinf,A_f)\right),
\]
where $v$ runs through all places of $\Qinf$.

If $K$ is any subfield of $\Qinf$ (e.g. $\Kinf$, $K_n$, etc.), we may define $H^1_i(K_v,A_f)\subset H^1(K_v,A_f)$ for any place $v\nmid p$ of $K$ in the same way and define the signed Selmer groups
\[
\Sel_i(A_f/K)=\ker\left(H^1(K,A_f)\rightarrow\prod_v H^1_{/i}(K_v,A_f)\right).
\]
In particular, we have the equality
\[
\Sel_i(A_f/K_\infty)=\Sel_i(A_f/\Qinf)^\Delta.
\]

Let $\Sigma$ be a set of places of $\QQ$ that contains $p$, all the primes that divide $N$ and $\infty$. We write $\QQ_\Sigma$ for the  maximal extension of $\QQ$ that is unramified outside $\Sigma$. If $K$ is a field contained inside $\QQ_\infty$, we shall write $H^i_\Sigma(K,*)$ for the Galois cohomology $H^i(\QQ_\Sigma/K,*)$. Note that we have the equality
\[
\Sel_i(A_f/K)\cong\ker\left(H^1_\Sigma(K,A_f)\rightarrow\prod_{v|\ell,\ell\in\Sigma}H^1_{/i}(K_v,A_f)\right).
\]

If $\Sigma_0$ is a subset of $\Sigma$ that does not contain $p$ and $\infty$, we define the \textit{non-primitive} signed Selmer groups
\[
\Sel_i^{\Sigma_0}(A_f/K)=\ker\left(H^1_\Sigma(K,A_f)\rightarrow\prod_{v|\ell,\ell\in\Sigma\setminus\Sigma_0}H^1_{/i}(K_v,A_f)\right).
\]

Given an integer $s$, we write $\Afs=A_f\otimes \chi^s$ and $\Tfs=\T_f\otimes \chi^{-s}$. We have the natural isomorphism
\[
H^1(\Qinf,\Afs)\cong H^1(\Qinf,A_f)\otimes\chi^s,
\]
which is compatible with
\[
H^1(\Kvinf,\Afs)\cong H^1(\Kvinf,A_f)\otimes\chi^s
\]
for all $v$. On defining $H^1_i(\Qvinf,\Afs)$
to be the image of $H^1_i(\Qvinf,A_f)$, we may equally define the signed Selmer groups for $\Afs$, namely,
\[
\Sel_i(\Afs/\Qinf)=\ker\left(H^1(\Qinf,\Afs)\rightarrow\prod_v H^1_{/i}(\Qvinf,\Afs)\right).
\]

As before, given a subfield $K$ of $\Qinf$, we may define $H^1_i(K_p,\Tfs)\subset H^1(K_p,\Tfs)$ to be the image of the projection of $\ker\Col_{f,i}\otimes\chi^{-s}$, which allows us to define $H^1_i(K_p,\Afs)$ as before. On defining $H^1_i(K_v,\Afs)$ for $v\nmid p$ similarly, we may define $\Sel_i(\Afs/K)$ as before. Given a subset $\Sigma_0$ of $\Sigma$ that does not contain $p$ and $\infty$, we may equally define the non-primitive Selmer groups $\Sel_i^{\Sigma_0}(\Afs/K)$ in the same way.

\begin{remark}\label{selmer-twists} We note that $\Sel_i(\Afs/\Qinf)$ and $\Sel_i^{\Sigma_0}(\Afs/\Qinf)$ are isomorphic to
$\Sel_i(A_f/\Qinf)\otimes\chi^s$ and $\Sel_i^{\Sigma_0}(A_f/\Qinf)\otimes\chi^s$ respectively as $\cO_E\lb \Gamma\rb$-modules.  Since $\chi^s=\kappa^s\times\omega^s$, which corresponds to the decomposition $\Gamma=\Gamma_0\times\Delta$, we have the $\Lambda$-isomorphism
\[
\Sel_i(\Afs/\Kinf)\cong\Sel_i(\Afs/\Qinf)^\Delta\cong\Sel_i(A_f/\Qinf)^{\omega^{-s}}\otimes\kappa^s,
\]
where $\Sel_i(A_f/\Qinf)^{\omega^{-s}}$ is the $\omega^{-s}$-isotypic component of $\Sel_i(A_f/\Qinf)$. More generally, if $\theta$ is any character on $\Delta$, the following isomorphism of $\Lambda$-modules holds:
\[
\Sel_i(\Afs/\Qinf)^\theta\cong\Sel_i(A_f/\Qinf)^{\theta\omega^{-s}}\otimes\kappa^s.
\]
In fact, via the isomorphism
\[
H^1(\Qinf,\Afs)^\theta\cong H^1(\Kinf,\Afs(\theta^{-1})),
\]
we may define $H^1_i(\Kvinf,\Afs(\theta^{-1}))$ for any place $v$ of $\Kinf$ and  hence $\Sel_i(\Afs(\theta^{-1})/\Kinf)$ in a natural way. This would coincide with the isotypic component $\Sel_i(\Afs/\Qinf)^\theta$.
 \end{remark}

Finally, for a subfield $K$ of $\Qinf$, we define the corresponding signed Selmer groups for $\Tfs$ over $K$ by
\[
\Sel_i(K,\Tfs):=\ker\left(H^1(K_n,\Tfs)\rightarrow \prod_{v}H^1_{/i}(K_{v},\Tfs)\right),
\]
where the product runs through all places of $K$.

\subsection{Relation between signed Selmer groups and their non-primitive counterparts}\label{section:relation-non-primitive}

Let $s\in\ZZ$ and $n\ge 0$ be integers. We recall the following Poitou-Tate exact sequences from \cite[Proposition~A.3.2]{perrinriou95}:
\begin{equation}\label{eq:PT1finite}\begin{split}
0\rightarrow H^1_i(\QQ(\mu_{p^n}),\Tfs)\rightarrow H^1_\Sigma(\QQ(\mu_{p^n}),\Tfs)\rightarrow \bigoplus H^1_{/i}(\QQ(\mu_{p^n})_{v},\Tfs)\rightarrow\\
\Sel_i(\Afs/\QQ(\mu_{p^n}))^\vee\rightarrow  H^2_\Sigma(\QQ(\mu_{p^n}),\Tfs)
,\end{split}
\end{equation}
\begin{equation}\label{eq:PT2finite}\begin{split}
0\rightarrow \Sel_i(\Afs/\QQ(\mu_{p^n}))\rightarrow H^1_\Sigma(\QQ(\mu_{p^n}),\Afs)\rightarrow \bigoplus H^1_{/i}(\QQ(\mu_{p^n}),\Afs)\rightarrow\\
 H^1_i(\QQ(\mu_{p^n}),\Tfs)^\vee
\rightarrow H^2_\Sigma(\QQ(\mu_{p^n}),\Afs)\rightarrow \bigoplus  H^2(\QQ(\mu_{p^n})_v,\Afs).\end{split}
\end{equation}
On taking inverse (resp. direct) limits, we have:
\begin{equation}\label{eq:PT1}\begin{split}
0\rightarrow \varprojlim H^1_i(\QQ(\mu_{p^n}),\Tfs)\rightarrow \varprojlim H^1_\Sigma(\QQ(\mu_{p^n}),\Tfs)\rightarrow \varprojlim\bigoplus  H^1_{/i}(\QQ(\mu_{p^n})_v,\Tfs)\rightarrow\\
\Sel_i(\Afs/\Qinf)^\vee\rightarrow  \varprojlim H^2_\Sigma(\QQ(\mu_{p^n}),\Tfs)
,\end{split}
\end{equation}
\begin{equation}\label{eq:PT2}\begin{split}
0\rightarrow \Sel_i(\Afs/\Qinf)\rightarrow H^1_\Sigma(\Qinf,\Afs)\rightarrow \bigoplus H^1_{/i}(\Qvinf,\Afs)\rightarrow\\
 \varinjlim H^1_i(\QQ(\mu_{p^n}),\T_f)^\vee
\rightarrow H^2_\Sigma(\Qinf,\Afs)\rightarrow\varinjlim \bigoplus  H^2(\QQ(\mu_{p^n})_{v},\Afs).\end{split}
\end{equation}

From now on we shall assume the following hypothesis holds.

\begin{itemize}
\item[\textbf{(tor)}] Let $\theta$ be a character on $\Delta$ and $i\in\{1,2\}$. The Selmer group $\Sel_i(A_{f,s}/\Qinf)^\theta$ is cotorsion over $\Lambda$.
\end{itemize}

This is known to hold in many cases; see for instance \cite[Theorem~7.3]{kobayashi03}, \cite[Proposition 6.4]{lei09}, \cite[Theorem~6.5]{leiloefflerzerbes10} and \cite[Theorem~7.14]{sprung09}.

\begin{lemma}\label{lem:vanishH1i}
The inverse limit $\varprojlim H^1_i(\QQ(\mu_{p^n}),\Tfs)$ is $0$.
\end{lemma}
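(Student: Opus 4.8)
The plan is to show that $\varprojlim H^1_i(\QQ(\mu_{p^n}),\Tfs)$ is a $\Lambda(\Gamma)$-submodule of $\HIw(\Qp,\Tfs)$ that is both torsion and torsion-free, hence zero. First I would recall that by definition $H^1_i(\QQ(\mu_{p^n}),\Tfs)$ is the image of $\ker\Col_{f,i}\otimes\chi^{-s}$ under the projection $\HIw(\Qp,\Tfs)\to H^1(\QQ(\mu_{p^n})_p,\Tfs)$; since these projections are compatible with corestriction, the inverse limit sits naturally inside $\varprojlim H^1(\QQ(\mu_{p^n})_p,\Tfs)=\HIw(\Qp,\Tfs)$, and in fact, by Lemma~\ref{lem:injsurj} (surjectivity of the projection maps and the identification in Remark~\ref{rem:restrictedHi}), the inverse limit equals $\ker\Col_{f,i}\otimes\chi^{-s}$ itself, or is at least contained in it. By the Herr-type isomorphism \eqref{eq:Herr} and the freeness of the Wach module, $\HIw(\Qp,\Tfs)\cong\NN(\Tfs)^{\psi=1}$ is a torsion-free $\Lambda(\Gamma)$-module, so any submodule — in particular $\varprojlim H^1_i$ — is torsion-free.

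Next I would produce torsion from the Poitou--Tate sequences. Twisting \eqref{eq:PT1} by $\chi^{-s}$, the first arrow identifies $\varprojlim H^1_i(\QQ(\mu_{p^n}),\Tfs)$ with the kernel of $\varprojlim H^1_\Sigma(\QQ(\mu_{p^n}),\Tfs)\to\varprojlim\bigoplus H^1_{/i}(\QQ(\mu_{p^n})_v,\Tfs)$, and the cokernel of that map injects into $\Sel_i(\Afs/\Qinf)^\vee$. Dualizing the hypothesis \textbf{(tor)}: for every character $\theta$ on $\Delta$, $\Sel_i(\Afs/\Qinf)^\theta$ is $\Lambda$-cotorsion, so $\Sel_i(\Afs/\Qinf)^\vee$ is a torsion $\Lambda(\Gamma)$-module. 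Feeding this back, together with the well-known fact that $H^2_\Sigma(\Qinf,\Tfs)$ (the target of the last map in \eqref{eq:PT1}) is $\Lambda(\Gamma)$-torsion — which follows from the weak Leopoldt conjecture / the fact that $\Tfs$ has no trivial $G_{\Qinf}$-subquotients, a consequence of \textbf{(inv)} via \textbf{(irred)} — one sees that $\varprojlim H^1_\Sigma(\QQ(\mu_{p^n}),\Tfs)$ has $\Lambda(\Gamma)$-rank equal to that of $\varprojlim\bigoplus H^1_{/i}(\QQ(\mu_{p^n})_v,\Tfs)$; but the local term at $p$ has rank one less than that of $\HIw(\Qp,\Tfs)$ (the Coleman map $\Col_{f,i}$ has corank one, its image in $\Lambda(\Gamma)$ being of rank one), and a global Euler characteristic computation shows $\varprojlim H^1_\Sigma$ has $\Lambda(\Gamma)$-rank one. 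Comparing, the kernel $\varprojlim H^1_i(\QQ(\mu_{p^n}),\Tfs)$ must have rank zero, i.e. it is $\Lambda(\Gamma)$-torsion. Combined with torsion-freeness from the previous paragraph, it vanishes.

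The main obstacle is bookkeeping the $\Lambda(\Gamma)$-ranks carefully: one must pin down that the local cohomology $\varprojlim\HIw(\Qp,\Tfs)$ has rank $2$, that $\ker\Col_{f,i}$ has rank $1$ (equivalently $\Col_{f,i}$ is surjective onto a rank-one submodule, which is where the structure of the logarithmic matrix $M$ from \eqref{eq:decomp} and the properties of $\Col_{f,i}$ enter), and that the global term $\varprojlim H^1_\Sigma(\QQ(\mu_{p^n}),\Tfs)$ has rank exactly $1$; this last point rests on Tate's global Euler characteristic formula together with the triviality of the relevant $H^0$ and $H^2$ terms over $\Qinf$, which \textbf{(inv)} and \textbf{(irred)} guarantee. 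An alternative, perhaps cleaner, route to the same conclusion is to work directly with the isotypic components: for each $\theta$, the $\theta$-component of $\varprojlim H^1_i$ is a torsion-free $\Lambda$-module which, by the twisted sequence \eqref{eq:PT1}, is a submodule of something of $\Lambda$-rank zero (using \textbf{(tor)} for that $\theta$), hence zero; summing over $\theta$ gives the result. I would likely present the argument in this isotypic form to keep the rank computations one variable at a time.
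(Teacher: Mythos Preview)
Your proof has a genuine confusion in the first paragraph that undermines the torsion-freeness step. In the Poitou--Tate sequence \eqref{eq:PT1finite}, the object $H^1_i(\QQ(\mu_{p^n}),\Tfs)$ is the \emph{global} signed Selmer group for $\Tfs$ over the number field $\QQ(\mu_{p^n})$ --- that is, the kernel of the map from $H^1_\Sigma(\QQ(\mu_{p^n}),\Tfs)$ to the product of local quotients. It is \emph{not} the local condition $H^1_i(\Qpn,\Tfs)$ at $p$ (note $\Qpn=\Qp(\mu_{p^n})$, a local field). Hence your inverse limit does not sit inside $\HIw(\Qp,\Tfs)$, and your appeal to the torsion-freeness of the local Iwasawa cohomology via the Wach module does not apply. (Indeed, you implicitly recognise this in your second paragraph, where you correctly identify the inverse limit with the kernel of the global-to-local map; this is inconsistent with the first paragraph.)

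The fix, and the comparison with the paper's proof, is as follows. The inverse limit lives inside the \emph{global} Iwasawa cohomology $\varprojlim H^1_\Sigma(\QQ(\mu_{p^n}),\Tfs)$. The paper invokes Kato's result \cite[Theorem~12.4(3)]{kato04}, which under \textbf{(irred)} says that each $\theta$-isotypic component of this module is \emph{free of rank one} over $\Lambda$. This single input replaces both halves of your argument: torsion-freeness is immediate, and instead of your Euler-characteristic rank count, the paper argues that if the map $g$ to $\image(\Col_{f,i})^\theta$ had nonzero kernel, then (the domain being free of rank one) $g$ would vanish identically, forcing the nonzero $\Lambda$-submodule $\image(\Col_{f,i})^\theta\subset\Lambda$ to inject into the torsion module $\Sel_i(\Afs/\Qinf)^{\vee,\theta}$ --- a contradiction with \textbf{(tor)}. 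Your rank-comparison route in the second paragraph is essentially correct in spirit (and you do not actually need $H^2_\Sigma$ to be torsion, since the cokernel already injects into $\Sel_i^\vee$), but you also omitted checking that the local terms away from $p$ contribute nothing to the inverse limit; the paper handles this by citing \cite[Lemma~6.2]{lei09}.
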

\begin{proof}
By twisting, Remark~\ref{selmer-twists} allows us to assume that $s=0$. Furthermore, it is enough to show that the claim holds at each $\Delta$-isotypic component. Let $\theta$ be a character on $\Delta$. We shall show that $\varprojlim H^1_i(\QQ(\mu_{p^n}),\T_f)^\theta=0$

We recall from \cite[Lemma~6.2]{lei09} that if $v\nmid p$, we have
\[
\varprojlim H^1_{/i}(\QQ(\mu_{p^n})_{v},\T_f)=0.
\]
Consequently, the third term of the exact sequence \eqref{eq:PT1} simplifies to be
\[
H^1_{/i}(\Qpinf,\T_f)\cong \image\Col_{f,i}\subset \cO_E\lb\Gamma\rb.
\]
Our claim would therefore follow from the injectivity of the morphism
\[
 g:\varprojlim H^1_\Sigma(\QQ(\mu_{p^n}),\Tfs)^\theta\rightarrow \image\Col_{f,i}^\theta\subset\Lambda.
\]

By \cite[Theorem~12.4(3)]{kato04}, the hypothesis \textbf{(irred)} tells us that the inverse limit $\varprojlim H^1(\QQ(\mu_{p^n}),\T_f)^\theta$ is free of rank $1$ over $\Lambda$. Let $z$ be any generator. Then, $g$ is injective if and only if $g(z)\ne0$. But if $g(z)=0$, then the exact sequence \eqref{eq:PT1} would then give
\[
\image\Col_{f,i}^\theta\hookrightarrow \Sel_i(\Afs/\Qinf)^{\vee,\theta}.
\]
This is impossible since the former is a non-zero sub-module of $\Lambda$, whereas the latter is $\Lambda$-torsion due to \textbf{(tor)}. Hence we are done.
\end{proof}

\begin{corollary}\label{cor:vanishH2}
We have $H^2_\Sigma(\Qinf,\Afs)=0$.
\end{corollary}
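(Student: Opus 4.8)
The plan is to read off the vanishing of $H^2_\Sigma(\Qinf,\Afs)$ directly from the Poitou-Tate sequence \eqref{eq:PT1}, using Lemma~\ref{lem:vanishH1i} to collapse its left-hand terms. First I would recall that by Lemma~\ref{lem:vanishH1i} the first term $\varprojlim H^1_i(\QQ(\mu_{p^n}),\Tfs)$ of \eqref{eq:PT1} vanishes, and that (as observed in the proof of that lemma) the third term simplifies to $\image\Col_{f,i}\subset\cO_E\lb\Gamma\rb$, since the $v\nmid p$ contributions to $\varprojlim\bigoplus H^1_{/i}(\QQ(\mu_{p^n})_v,\Tfs)$ are zero by \cite[Lemma~6.2]{lei09}. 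Thus \eqref{eq:PT1} becomes the exact sequence
\[
0\rightarrow \varprojlim H^1_\Sigma(\QQ(\mu_{p^n}),\Tfs)\xrightarrow{g} \image\Col_{f,i}\rightarrow \Sel_i(\Afs/\Qinf)^\vee\rightarrow \varprojlim H^2_\Sigma(\QQ(\mu_{p^n}),\Tfs)\rightarrow\cdots.
\]

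Next I would pin down $\varprojlim H^2_\Sigma(\QQ(\mu_{p^n}),\Tfs)$ and relate it to $H^2_\Sigma(\Qinf,\Afs)$. By \cite[Theorem~12.4]{kato04} together with \textbf{(irred)} (as used in the proof of Lemma~\ref{lem:vanishH1i}), $\varprojlim H^2_\Sigma(\QQ(\mu_{p^n}),\Tfs)=0$ on each $\Delta$-isotypic component — indeed Kato shows the second Iwasawa cohomology group vanishes when the residual representation is irreducible — so $\varprojlim H^2_\Sigma(\QQ(\mu_{p^n}),\Tfs)=0$. Feeding this into the displayed sequence shows the cokernel of $g$ is exactly $\Sel_i(\Afs/\Qinf)^\vee$; this is consistent with, and refines, the argument already given in Lemma~\ref{lem:vanishH1i}, where $g(z)\ne 0$ is forced because its cokernel must be $\Lambda$-torsion. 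What I actually need is that the Pontryagin dual of $H^2_\Sigma(\Qinf,\Afs)$ injects into $\varprojlim H^2_\Sigma(\QQ(\mu_{p^n}),\Tfs)$: this follows from the fact that $\Afs=\varinjlim \Tfs$ is the direct limit of its finite submodules dual to $\Tfs$, so Poitou-Tate duality identifies $H^2_\Sigma(\Qinf,\Afs)^\vee$ with a submodule of $\varprojlim H^2_\Sigma(\QQ(\mu_{p^n}),\Tfs)$ (using that $\Qinf=\bigcup\QQ(\mu_{p^n})$ and that $H^2$ commutes with the inverse/direct limits in the appropriate variance). Since the target is $0$, we conclude $H^2_\Sigma(\Qinf,\Afs)=0$.

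Alternatively, and perhaps more cleanly, I would argue directly from the second Poitou-Tate sequence \eqref{eq:PT2}: its last term $\varinjlim\bigoplus H^2(\QQ(\mu_{p^n})_v,\Afs)$ and its penultimate arrow land $H^2_\Sigma(\Qinf,\Afs)$ between $\varinjlim H^1_i(\QQ(\mu_{p^n}),\T_f)^\vee$ and $\varinjlim\bigoplus H^2(\QQ(\mu_{p^n})_v,\Afs)$. Dualizing $\varinjlim H^1_i(\QQ(\mu_{p^n}),\T_f)^\vee$ gives $\varprojlim H^1_i(\QQ(\mu_{p^n}),\T_f)=0$ by Lemma~\ref{lem:vanishH1i}; and the local $H^2(\QQ(\mu_{p^n})_v,\Afs)$ vanish in the limit because $\Afs$ is divisible and $H^2$ of a local field with divisible coefficients is a quotient of $H^2$ of the full module, which by local duality is dual to $(\Tfs)^{G}$, and this is zero either by \textbf{(inv)} at $p$ or by a weight argument at $v\nmid p$. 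Hence $H^2_\Sigma(\Qinf,\Afs)$ is squeezed between two vanishing terms and is therefore $0$.

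The main obstacle is the bookkeeping around the local $H^2$ terms and the precise form of Poitou-Tate duality over the infinite extension $\Qinf$: one must be careful that taking direct limits of the degree-two global and local cohomology commutes with the formation of the Selmer and Poitou-Tate sequences, and that the vanishing of the finite-level second Iwasawa cohomology $\varprojlim H^2_\Sigma(\QQ(\mu_{p^n}),\Tfs)$ supplied by \cite[Theorem~12.4]{kato04} genuinely transfers to $H^2_\Sigma(\Qinf,\Afs)$ after dualizing. Once the duality dictionary $H^2_\Sigma(\Qinf,\Afs)^\vee\cong\varprojlim H^2_\Sigma(\QQ(\mu_{p^n}),\Tfs)$ (up to the controlled local contributions) is in place, the corollary is immediate.
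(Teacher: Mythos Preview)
Your second approach via \eqref{eq:PT2} is the paper's route: Lemma~\ref{lem:vanishH1i} forces $\varinjlim H^1_i(\QQ(\mu_{p^n}),\Tfs)^\vee=0$, so $H^2_\Sigma(\Qinf,\Afs)$ injects into $\varinjlim\bigoplus_v H^2(\QQ(\mu_{p^n})_v,\Afs)$, whose Pontryagin dual is $\varprojlim\bigoplus_v H^0(\QQ(\mu_{p^n})_v,\Tfs)$. The only divergence is in how you kill these local terms. You claim each finite-level $H^0$ already vanishes: at $p$ this is correct via \textbf{(inv)} (since $\chi$ is trivial on $G_{\Qpinf}$ one has $(\Tfs)^{G_{\Qpn}}\subset(\Tfs)^{G_{\Qpinf}}=0$), while at $v\nmid p$ your ``weight argument'' is the Ramanujan--Petersson bound on Frobenius eigenvalues. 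The paper instead allows each $H^0(\QQ(\mu_{p^n})_v,\Tfs)$ to be nonzero and observes that for large $n$ it stabilises to a free $\cO_E$-module of rank at most $2$ on which corestriction is multiplication by $p$, whence $\varprojlim_{\times p}\cO_E^r=0$. The paper's argument is more elementary and uniform in $v$; yours is shorter once Ramanujan is granted.

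Your first approach, however, does not close. Kato's Theorem~12.4 gives that $\varprojlim H^2_\Sigma(\QQ(\mu_{p^n}),\Tfs)$ is $\Lambda$-\emph{torsion}, not that it vanishes; the vanishing you invoke is strictly stronger and is not supplied by that reference. Moreover, there is no Poitou--Tate identification of $H^2_\Sigma(\Qinf,\Afs)^\vee$ with a submodule of $\varprojlim H^2_\Sigma(\QQ(\mu_{p^n}),\Tfs)$: global duality pairs degree $i$ with degree $2-i$, not degree $i$ with degree $i$, so the ``duality dictionary'' you posit simply does not exist in the form stated. You should drop this route and keep the second.
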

\begin{proof}
By Lemma~\ref{lem:vanishH1i} and \eqref{eq:PT2}, $H^2_\Sigma(\Qinf,\Afs)$ injects into $\varinjlim \bigoplus H^2(\QQ(\mu_{p^n})_v,\Afs)$. By local Tate duality, this is isomorphic to $ \varprojlim \bigoplus H^0(\QQ(\mu_{p^n})_v,\Tfs)$. We now show that each of these inverse limits is zero.

Suppose that $\varprojlim H^0(\QQ(\mu_{p^n})_v,\T_{f,s})\ne 0$ for some $v$. Then, $H^0(\QQ(\mu_{p^n})_v,\T_{f,s})\ne 0$ for some $n$. It is a free $\cO_E$-module of either rank $1$ or rank $2$. In either case, for $n$ sufficiently large, $H^0(\QQ(\mu_{p^{n+m}})_v,\T_{f,s})=H^0(\QQ(\mu_{p^n})_{v},\T_{f,s})$ for all $m\ge0$. The corestriction map from $H^0(\QQ(\mu_{p^{n+m+1}})_{v},\T_{f,s})$ to $H^0(\QQ(\mu_{p^{n+m}})_v,\T_{f,s})$ is simply multiplication by $p$. But $\displaystyle\varprojlim_{\times p}\cO_E=0$, hence the claim.
\end{proof}

\begin{corollary}\label{cor:surjloc}
The natural map
\[
H^1_\Sigma(\Qinf,\Afs)\rightarrow \prod_{v|\ell,\ell\in\Sigma}H^1_{/i}(\Qvinf,\Afs)
\]
is surjective for both $i=1,2$.
\end{corollary}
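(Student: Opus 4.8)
The plan is to read the surjectivity directly off the Poitou--Tate exact sequence \eqref{eq:PT2}. That sequence is exact at the term $\bigoplus_v H^1_{/i}(\Qvinf,\Afs)$, and the arrow leaving that term lands in $\varinjlim_n H^1_i(\QQ(\mu_{p^n}),\Tfs)^\vee$. Since Pontryagin duality interchanges direct and inverse limits, this target is $\bigl(\varprojlim_n H^1_i(\QQ(\mu_{p^n}),\Tfs)\bigr)^\vee$, which vanishes by Lemma~\ref{lem:vanishH1i}. Hence the arrow out of $\bigoplus_v H^1_{/i}(\Qvinf,\Afs)$ is identically zero, and exactness of \eqref{eq:PT2} at that spot forces the preceding map $H^1_\Sigma(\Qinf,\Afs)\to\bigoplus_v H^1_{/i}(\Qvinf,\Afs)$ to be surjective. (It is worth noting that this, together with Corollary~\ref{cor:vanishH2}, also yields short exactness of the defining sequence of $\Sel_i(\Afs/\Qinf)$, which will be convenient in Section~\ref{section:relation-non-primitive}.)

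It then only remains to match the direct sum appearing in \eqref{eq:PT2} with the product in the statement. Here one observes that $\Sigma$ is finite; the archimedean term is zero because $\Qinf$ is totally complex and $p$ is odd, so $H^1$ of the trivial local Galois group vanishes; there is a unique place of $\Qinf$ above $p$ since $p$ is totally ramified; and for each finite prime $\ell\in\Sigma$ with $\ell\neq p$ the element $\ell$ has infinite order in $\Gal(\Qinf/\QQ)\cong\Zp^\times$ (its image in $1+p\Zp$ is nontrivial, as $\ell^{p-1}\neq 1$ for a rational prime $\ell\geq 2$), so its decomposition group is open and there are only finitely many places of $\Qinf$ above $\ell$. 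Consequently the indexing set $\{v\mid\ell,\ \ell\in\Sigma\}$ contributing nonzero terms is finite, so $\bigoplus$ and $\prod$ coincide and the corollary follows for both $i=1,2$.

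I do not anticipate any real obstacle in this argument: the substantive input --- the freeness of $\varprojlim_n H^1(\QQ(\mu_{p^n}),\T_f)^\theta$ coming from Kato's Euler system, the explicit reciprocity law packaged in Lemma~\ref{lem:orth}, and the cotorsion hypothesis \textbf{(tor)} --- has already been spent in proving Lemma~\ref{lem:vanishH1i}. The only points needing a moment's care are the routine identification of the direct sum with the product described above, and keeping track of the Tate twist $s$ throughout (which, by Remark~\ref{selmer-twists}, could equally be reduced to the case $s=0$).
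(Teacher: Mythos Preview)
Your proposal is correct and follows exactly the paper's own approach: the paper's proof consists of the single sentence ``This follows on combining \eqref{eq:PT2} and Lemma~\ref{lem:vanishH1i},'' and you have simply spelled out the details of that combination, including the routine identification $\varinjlim_n H^1_i(\QQ(\mu_{p^n}),\Tfs)^\vee \cong \bigl(\varprojlim_n H^1_i(\QQ(\mu_{p^n}),\Tfs)\bigr)^\vee$ and the finiteness of the set of places above $\Sigma$. There is nothing to add.
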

\begin{proof}
This follows on combining \eqref{eq:PT2} and Lemma~\ref{lem:vanishH1i}.
\end{proof}

\begin{corollary}\label{cor:selmerquotient}
We have the isomorphism
\[
\Sel^{\Sigma_0}_i(\Afs/\Qinf)/\Sel_i(\Afs/\Qinf)\cong
\prod_{v|l,l\in\Sigma_0}H^1_{/i}(\Qvinf,\Afs).
\]
\end{corollary}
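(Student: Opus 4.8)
The plan is to exhibit both $\Sel_i(\Afs/\Qinf)$ and $\Sel^{\Sigma_0}_i(\Afs/\Qinf)$ as kernels of localization maps out of the \emph{same} group $H^1_\Sigma(\Qinf,\Afs)$, and then to compare them by a one-line diagram chase. Using the reformulation $\Sel_i(\Afs/K)\cong\ker\bigl(H^1_\Sigma(K,\Afs)\to\prod_{v\mid\ell,\ \ell\in\Sigma}H^1_{/i}(K_v,\Afs)\bigr)$ recorded after the definition of the signed Selmer groups, write
\[
\Phi\colon H^1_\Sigma(\Qinf,\Afs)\longrightarrow \prod_{v\mid\ell,\ \ell\in\Sigma}H^1_{/i}(\Qvinf,\Afs)
\]
for the localization map, so that $\Sel_i(\Afs/\Qinf)=\ker\Phi$, while $\Sel^{\Sigma_0}_i(\Afs/\Qinf)=\ker(\pi\circ\Phi)$, where $\pi$ is the coordinate projection onto the factors indexed by the places $v\mid\ell$ with $\ell\in\Sigma\setminus\Sigma_0$. (This uses that $\Sigma_0$ contains neither $p$ nor $\infty$, so all places above $\Sigma_0$ are among those where a local condition is imposed in the primitive Selmer group.)

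First I would observe that $\ker\pi=\prod_{v\mid\ell,\ \ell\in\Sigma_0}H^1_{/i}(\Qvinf,\Afs)$, simply because $\pi$ forgets precisely the coordinates indexed by the places above $\Sigma_0$. Since $\Sel^{\Sigma_0}_i(\Afs/\Qinf)=\Phi^{-1}(\ker\pi)$ contains $\ker\Phi=\Sel_i(\Afs/\Qinf)$, the map $\Phi$ restricts to a map $\Phi^{-1}(\ker\pi)\to\ker\pi$ with kernel $\Sel_i(\Afs/\Qinf)$; this restricted map is surjective because $\Phi$ itself is onto the full product over $\Sigma$ by Corollary~\ref{cor:surjloc}, hence hits every element of $\ker\pi$. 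The first isomorphism theorem then gives
\[
\Sel^{\Sigma_0}_i(\Afs/\Qinf)/\Sel_i(\Afs/\Qinf)\;\cong\;\ker\pi\;=\;\prod_{v\mid\ell,\ \ell\in\Sigma_0}H^1_{/i}(\Qvinf,\Afs),
\]
which is the assertion. Equivalently, one may apply the snake lemma to the evident morphism between the short exact sequences $0\to\Sel_i(\Afs/\Qinf)\to H^1_\Sigma(\Qinf,\Afs)\to\image\Phi\to0$ and $0\to\Sel^{\Sigma_0}_i(\Afs/\Qinf)\to H^1_\Sigma(\Qinf,\Afs)\to\image(\pi\circ\Phi)\to0$.

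I do not expect a genuine obstacle here: the argument is formal once the surjectivity in Corollary~\ref{cor:surjloc} is available. The only point requiring a little care is bookkeeping — tracking exactly which local conditions are imposed in each of $\Sel_i$ and $\Sel^{\Sigma_0}_i$, and making sure one invokes surjectivity of $\Phi$ onto the \emph{entire} product $\prod_{v\mid\ell,\ \ell\in\Sigma}H^1_{/i}(\Qvinf,\Afs)$ (not merely onto its image), which is precisely what Corollary~\ref{cor:surjloc}, via Lemma~\ref{lem:vanishH1i} and the Poitou--Tate sequence \eqref{eq:PT2}, supplies.
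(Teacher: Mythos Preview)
Your proof is correct and follows essentially the same route as the paper: both arguments use that $\Sel_i$ and $\Sel_i^{\Sigma_0}$ are kernels of localization maps out of $H^1_\Sigma(\Qinf,\Afs)$, and then invoke the surjectivity of Corollary~\ref{cor:surjloc} (which the paper packages together with \eqref{eq:PT2}) to conclude. Your write-up just makes the diagram chase more explicit than the paper's one-line reference.
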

\begin{proof}
This is a consequence of \eqref{eq:PT2} and Corollary~\ref{cor:surjloc}.
\end{proof}

\begin{remark}\label{rk:mu}
 By \cite[Proposition~2.4]{greenbergvatsal}, $H^1_{/i}(\Kvinf,\Afs)$ is $\Lambda$-cotorsion with zero $\mu$-invariant for all $v\nmid p$. Let $\theta\in\hat\Delta$. Since $H^1_{/i}(\Qvinf,\Afs)^\theta\cong H^1_{/i}(\Kvinf,\Afs(\theta^{-1}))$, the same can be said about the latter.  Therefore, under the hypothesis \textbf{(tor)},  the isomorphism of Corollary~\ref{cor:selmerquotient} tells us that $\Sel^{\Sigma_0}_i(\Afs/\Qinf)^\theta$ is also $\Lambda$-cotorsion. Furthermore, its $\mu$-invariant coincides of that of $\Sel_i(\Afs/\Qinf)^\theta$. (See \S \ref{section:invariants} for the definition of $\mu$-invariants.)
\end{remark}

\subsection{Signed Selmer groups over $\QQ$}\label{section:signed-selmer-over-QQ}
In this section, we study the signed Selmer groups $\Sel_i(\Afs/\QQ)$ and compare them to the Bloch-Kato Selmer group $\Sel_{\mathrm{BK}}(\Afs/\QQ)$. We begin by comparing our local condition at $p$, $H^1_i(\Qp,\Afs)$, with the Bloch-Kato local condition $H^1_f(\Qp,\Afs)$. By definition, this is equivalent to  comparing $H^1_i(\Qp,\Tfs)$ and $H^1_f(\Qp,\Tfs)$.

Recall that $H^1_i(\Qp,\Tfs)$ is defined to be the image of $\ker\Col_{f,i}\otimes\chi^{-s}$ under the natural map $\HIw(\Qp,\Tfs)\rightarrow H^1(\Qp,\Tfs)$. We recall the definition of the Bloch-Kato local condition $H^1_f(\Qp,\Tfs)$ later, but in the next proof we will use the fact that it is the kernel of the Bloch-Kato dual-exponential map $\exp^*$.

\begin{proposition}\label{prop:local-at-p-comparison}
Let $s\in\{0,1,\ldots, k-2\}$, $z\in\HIw(\Qp,\T_f)$ and write  $z_s$ to be the image of $z$ inside $H^1(\Qp,\Tfs)$. Then, $\chi^s(\Col_{f,2}(z))=0$ if and only if $z_s\in H^1_f(\Qp,\Tfs)$.
\end{proposition}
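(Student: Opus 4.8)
The plan is to translate the condition $\chi^s(\Col_{f,2}(z)) = 0$ into the vanishing of $\exp^*$ on $z_s$ by running the value $\chi^s$ through Perrin-Riou's big logarithm map and comparing the result with the known interpolation formula for $\cL_{\T_f}$ at the relevant Hodge--Tate weight. First I would recall that, under the hypothesis $s \in \{0, 1, \dots, k-2\}$, the twist $\Tfs = \T_f \otimes \chi^{-s}$ has Hodge--Tate weights $\{-s, k-1-s\}$, so that $-s \le 0$ and the characters $\chi^s$ (viewed as $\kappa^s\omega^s$ on $\Gamma = \Gamma_0 \times \Delta$, or more precisely the point of the weight space corresponding to evaluating at the cyclotomic twist by $s$) lie in the range where Perrin-Riou's map $\cL_{\T_f}$ satisfies an explicit reciprocity / interpolation formula relating $\chi^s\bigl(\cL_{\T_f}(z)\bigr)$ (suitably normalised by the $\ell_i$ factors appearing in \eqref{eq:twist}) to $\exp^*(z_s)$ — this is exactly the content of \cite[\S3.6.4]{perrinriou94} as reproved in \cite[Theorem~B.6]{loefflerzerbes14}, already invoked in the proof of Lemma~\ref{lem:orth}. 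The Bloch--Kato subspace $H^1_f(\Qp, \Tfs)$ is, by the characterization recalled just before the statement, precisely $\ker\exp^*$ (after tensoring with $E$; one then uses that $H^1_f(\Qp,\Tfs)$ is saturated so intersecting back with $H^1(\Qp,\Tfs)$ recovers the integral statement), so it suffices to show $\chi^s(\Col_{f,2}(z)) = 0 \iff \exp^*(z_s) = 0$.

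Next I would unwind the decomposition \eqref{eq:decomp}, namely $\cL_{\T_f} = (\nu_1\ \ \nu_2)\cdot M \cdot (\Col_{f,1}, \Col_{f,2})^{t}$, together with the twisting formula \eqref{eq:twist}, to express $\chi^s(\cL_{\T_f}(z))$ as a $\Dcris$-valued quantity built from $\chi^s(\Col_{f,1}(z))$ and $\chi^s(\Col_{f,2}(z))$ and the matrix $M$ evaluated at $\chi^s$. The key structural input is the normalization chosen in the excerpt: $n_1, n_2$ are picked so that $\nu_1$ is an $\cO_E$-basis of $\Fil^0\Dcris(\T_f)$ and $\vp(\nu_1) = \nu_2$. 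Since $\exp^*$ lands in $\Fil^0\Dcris$ of the appropriate twist, and since the interpolation formula pairs $\cL_{\T_f}(z)$ evaluated at $\chi^s$ against the filtration, I expect that the component of $\chi^s(\cL_{\T_f}(z))$ that survives to compute $\exp^*(z_s)$ is precisely the one governed by $\Col_{f,2}$ — this is why it is $\Col_{f,2}$ rather than $\Col_{f,1}$ that appears, matching the later assertion (Proposition~\ref{prop:signed-eq-bk}) that $\Sel_2(f/\QQ) = \Sel_{\mathrm{BK}}(f/\QQ)$. Concretely, I would show that after projecting to $\Fil^0$ the $\Col_{f,1}$-contribution either vanishes or is absorbed, so that $\exp^*(z_s)$ is a nonzero scalar multiple of $\chi^s(\Col_{f,2}(z))\cdot(\text{basis vector of }\Fil^0)$, with the scalar being a product of nonzero terms: the relevant entry of $M$ at $\chi^s$, the $\ell_i$-factors $(\ell_{-1}\cdots\ell_{-m})$ from \eqref{eq:twist}, and the period $t^{k-2}$-type factors — all of which are nonzero for $s$ in the stated range (the $\ell_i$ factors vanish only at negative integers outside $\{0,\dots,k-2\}$ after the twist, so one must check the bookkeeping of indices carefully).

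The main obstacle, and the step I would spend the most care on, is precisely this bookkeeping: verifying that for every $s \in \{0, 1, \dots, k-2\}$ the scalar relating $\exp^*(z_s)$ to $\chi^s(\Col_{f,2}(z))$ is genuinely a \emph{unit-up-to-nonzero-scalar}, i.e. that none of the $\ell_i$ correction factors, nor the relevant entry of the logarithmic matrix $M$ (whose explicit shape comes from \cite[\S3A]{leiloefflerzerbes11} and \cite[Lemma~3.1]{leiloefflerzerbes15}), vanishes at the point $\chi^s$. This requires knowing the precise form of $M$ — in particular that its $(2, \bullet)$-entries, or whichever entries pair nontrivially with $\Fil^0$ after applying $\vp^{-1}$-type operators and using $\vp(\nu_1) = \nu_2$, are non-vanishing at these finitely many points — and correctly tracking the shift in Hodge--Tate weights caused by the Tate twist by $\chi^{-s}$ so that the Perrin-Riou interpolation formula applies in its "$\exp^*$" (rather than "$\exp$") regime. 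Once the non-vanishing of this scalar is established, the equivalence $\chi^s(\Col_{f,2}(z)) = 0 \iff \exp^*(z_s) = 0 \iff z_s \in H^1_f(\Qp,\Tfs)$ follows immediately, and I would close by noting that the case $s = 0$ (or wherever $\exp^*$ degenerates into a plain dual-exponential with no $\ell_i$ correction) is handled by the same formula read at $m = 0$.
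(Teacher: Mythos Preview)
Your overall strategy matches the paper's: interpolate $\cL_{\T_f}$ at $\chi^s$, relate the value to $\exp^*(z_s)$ via the decomposition \eqref{eq:decomp}, and use the $\Fil^0$ constraint to isolate $\Col_{f,2}$. However, two concrete points in your account are off, and the first is a genuine missing ingredient.

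First, you never invoke the key simplification that $\chi^s(M)$ is exactly the matrix of $\vp$ on $\Dcris(\T_f)$ in the basis $\nu_1,\nu_2$ (this is \cite[Lemma~3.6]{leiloefflerzerbes15}). Without it, evaluating the logarithmic matrix $M$ at $\chi^s$ is opaque and your worry about ``the relevant entry of $M$ at $\chi^s$'' being nonzero cannot be resolved. With it, the interpolation formula (which is \cite[Theorem~B.5]{loefflerzerbes14}, not B.6) reduces to applying the explicit operator $(1-p^s\vp)^{-1}(\vp-p^{-1-s})$ to $\nu_1,\nu_2$, and using $\vp(\nu_1)=\nu_2$ together with the quadratic relation $\vp^2-a_pp^{1-k}\vp+p^{1-k}\varepsilon(p)=0$ one writes down its matrix $B_s=\left(\begin{smallmatrix}a&b\\c&d\end{smallmatrix}\right)$ explicitly.

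Second, the $\ell_i$-factors from \eqref{eq:twist} play no role here, so your stated ``main obstacle'' is the wrong one. The actual mechanism is: since $\exp^*(z_s)\otimes t^{-s}e_s$ lands in $\Fil^0=\cO_E\cdot\nu_1$, the $\nu_2$-coefficient of $(\nu_1\ \nu_2)\cdot B_s\cdot(\chi^s(\Col_{f,1}(z)),\chi^s(\Col_{f,2}(z)))^t$ must vanish, giving $c\,\chi^s(\Col_{f,1}(z))+d\,\chi^s(\Col_{f,2}(z))=0$. One checks directly that $c$ is a nonzero multiple of $1-p^{-1}$, solves for $\chi^s(\Col_{f,1}(z))$, and substitutes into the $\nu_1$-coefficient to obtain $s!\,\exp^*(z_s)\otimes t^{-s}e_s=\frac{bc-ad}{c}\,\chi^s(\Col_{f,2}(z))\cdot\nu_1$. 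The nonzero scalar you need is $(bc-ad)/c=-\det(B_s)/c$, not an entry of $M$ or any $\ell_i$.
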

\begin{proof}
By \cite[Theorem~B.5]{loefflerzerbes14}, we have the interpolation formula
\[
\chi^s\left(\cL_{\T_f}(z)\right)=s!(1-p^s\vp)(1-p^{-1-s}\vp^{-1})^{-1}\left(\exp^*(z_s)\otimes t^{-s}e_s\right),
\]
which is equal to
\[
\begin{pmatrix}
\nu_1&\nu_2
\end{pmatrix}
\cdot \chi^s(M)\cdot
\begin{pmatrix}
\chi^s\left(\Col_{f,1}(z)\right)\\ \chi^s( \Col_{f,2}(z))
\end{pmatrix}
\]
thanks to \eqref{eq:decomp}. We recall from \cite[Lemma~3.6]{leiloefflerzerbes15} that $\chi^s(M)$ is equal to the matrix of $\vp$ with respect to the basis $\nu_1$, $\nu_2$. Hence, we deduce that
\begin{equation}
s!\left(\exp^*(z_s)\otimes t^{-s}e_s\right)=
(1-p^s\vp)^{-1}(\vp-p^{-1-s})\begin{pmatrix}
\nu_1&\nu_2
\end{pmatrix}
\cdot
\begin{pmatrix}
\chi^s\left(\Col_{f,1}(z)\right)\\ \chi^s( \Col_{f,2}(z))
\end{pmatrix}.\label{eq:interpolexp}
\end{equation}
Since $\vp(\nu_1)=\nu_2$ and $\vp^2-a_pp^{1-k}\vp+p^{1-k}\epsilon(p)=0$ on $\Dcris(\T_f)$, the right-hand side can be rewritten as
\[
\begin{pmatrix}
\nu_1&\nu_2
\end{pmatrix}
\cdot B_s\cdot
\begin{pmatrix}
\chi^s\left(\Col_{f,1}(z)\right)\\ \chi^s( \Col_{f,2}(z))
\end{pmatrix},
\]
where $B_s$ is the matrix given by
\[
\frac{1}{1-a_pp^{s-k+1}+p^{2s-k+1}\epsilon(p)}
\begin{pmatrix}
-p^{-1-s}-p^{1-k+s}\epsilon(p)+a_pp^{-k}& p^{-k}(1-p)\epsilon(p)\\
1-p^{-1}&-p^{-1-s}-p^{s-k+1}\epsilon(p)+a_pp^{-k+1}
\end{pmatrix}.
\]
For simplicity, we write $B_s=\begin{pmatrix}
a&b\\ c&d
\end{pmatrix}$.

Since $\exp^*(z_s)\otimes t^{-s}e_s$ lies inside $\Fil^0\Dcris(\T_f)$, the coefficient of $\nu_2$ is forced to be zero. That is,
\[
c\times \chi^s\left(\Col_{f,1}(z)\right)+d\times\chi^s\left(\Col_{f,2}(z)\right)=0.
\]
Note that $c\ne0$.
Using this relation, we deduce that the right-hand side of \eqref{eq:interpolexp} is equal to
\[
\left(a\times \chi^s\left(\Col_{f,1}(z)\right)+b\times\chi^s\left(\Col_{f,2}(z)\right)\right)\cdot \nu_1=\frac{bc-ad}{c}\times \chi^s\left(\Col_{f,2}(z)\right)\cdot \nu_1.
\]
Consequently, $\chi^s\left(\Col_{f,2}(z)\right)=0$ if and only if
 $\exp^*(z_s)\otimes t^{-s}e_s=0$. But the kernel of $\exp^*$ is $H^1_f(\QQ,\Tfs)$, hence we are done.
\end{proof}

\begin{remark}\label{rk:swap1and2}
In fact, if the entry $d$ in the matrix $B_s$ is non-zero, then we may replace  $\Col_{f,2}$ by $\Col_{f,1}$ in the statement of Proposition~\ref{prop:local-at-p-comparison}.
\end{remark}

We may now compare our Selmer groups to those of Bloch-Kato. Let us first recall the relevant definitions. If $V$ is a $\Qp$-vector space equipped with a $G_\QQ$-action, then
\[
H^1_f(\QQ_\ell,V) =  \begin{cases}
      H^1_{\mathrm{ur}}(\QQ_\ell,V) & \ell \neq p,\\
      \mathrm{ker}\left( H^1(\QQ_\ell,V) \to H^1(\QQ_\ell,V\otimes\Bcris) \right) & \ell = p,
   \end{cases}
\]
where $\Bcris$ is Fontaine's ring of periods. If $T$ is a $\Zp$-lattice of $V$ stable under $G_\QQ$, then $H^1_f(\QQ_\ell,T)$ (resp. $H^1_f(\QQ_\ell,V/T)$) is defined as the pre-image (resp. image) of $H^1_f(\QQ_\ell,V)$ under the natural inclusion map (resp. projection map). Letting $H^1_{/f}(\cdots)$ denote the quotient $H^1(\cdots)/H^1_{f}(\cdots)$, the Bloch-Kato Selmer group is defined to be
\[
\Sel_{\mathrm{BK}}(A / \QQ) = \mathrm{ker}\left( H^1_\Sigma(\QQ,A) \rightarrow \prod_{\ell \in \Sigma} H^1_{/f}(\QQ_\ell,A) \right).
\]

\begin{proposition}\label{prop:signed-eq-bk}
For $s\in\{0,\ldots, k-2\}$, we have an equality of Selmer groups
\[
\Sel_2(\Afs / \QQ) = \Sel_{\mathrm{BK}} (\Afs / \QQ).
\]
\end{proposition}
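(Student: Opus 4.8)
The plan is to compare the two Selmer groups place by place, using the fact that both are defined as kernels of localization maps out of $H^1_\Sigma(\QQ, \Afs)$ and that the global cohomology group is the same in both cases. So it suffices to show that the local conditions agree at every prime $\ell \in \Sigma$. Away from $p$, both Selmer groups impose the unramified local condition $H^1_\ur(\QQ_\ell, \Afs)$ by definition, so there is nothing to prove there. At the infinite place there is no condition. Thus the entire content is the claim that $H^1_2(\Qp, \Afs) = H^1_f(\Qp, \Afs)$, and by definition of both groups as images of the corresponding conditions on $\Tfs$ under the projection $H^1(\Qp, \Tfs) \to H^1(\Qp, \Afs)$, it is enough to establish $H^1_2(\Qp, \Tfs) = H^1_f(\Qp, \Tfs)$.

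To prove this equality of subgroups of $H^1(\Qp, \Tfs)$, I would use Proposition~\ref{prop:local-at-p-comparison}. Recall that $H^1_2(\Qp, \Tfs)$ is by definition the image of $\ker\Col_{f,2} \otimes \chi^{-s}$ under the natural map $\HIw(\Qp, \Tfs) \to H^1(\Qp, \Tfs)$. By Lemma~\ref{lem:injsurj} (applied with $K = \Qp$, using hypothesis \textbf{(inv)}), this natural projection $\HIw(\Qp, \Tfs) \to H^1(\Qp, \Tfs)$ is surjective, so every class $z_s \in H^1(\Qp, \Tfs)$ lifts to some $z \in \HIw(\Qp, \T_f)$ with $z_s$ its image (after untwisting, one can always reduce to $s$ in the stated range and work with $\T_f$ itself, twisting by $\chi^{-s}$). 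Proposition~\ref{prop:local-at-p-comparison} then says precisely that, for such a lift $z$, one has $\chi^s(\Col_{f,2}(z)) = 0$ if and only if $z_s \in H^1_f(\Qp, \Tfs)$. Since $z \in \ker\Col_{f,2}$ implies $\chi^s(\Col_{f,2}(z)) = 0$, we immediately get $H^1_2(\Qp, \Tfs) \subseteq H^1_f(\Qp, \Tfs)$. For the reverse inclusion, given $z_s \in H^1_f(\Qp, \Tfs)$, choose a lift $z$; then $\chi^s(\Col_{f,2}(z)) = 0$, but this alone does not say $z \in \ker\Col_{f,2}$, so a little more care is needed: one must either modify $z$ within its fiber over $z_s$ to land in $\ker\Col_{f,2}$, or argue via dimension/corank counting that the two subgroups, already known to be nested, must coincide.

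The cleanest route for the reverse inclusion is to pass to $E$-vector spaces (or to $A_f$) and count coranks. By Lemma~\ref{lem:orthfinite} and Lemma~\ref{lem:orth}, $H^1_2(\Qp, \Tfs)$ and $H^1_2(\Qp, \Afs)$ are exact orthogonal complements under Tate local duality, which pins down the $\cO_E$-corank of $H^1_2(\Qp, \Afs)$; on the other hand, the Bloch--Kato local condition $H^1_f(\Qp, \Afs)$ has corank equal to $\dim_E \Fil^0 \Dcris(\Vfs) = \dim_E H^1_f(\Qp, \Vfs)$ by the standard local Euler characteristic and Bloch--Kato computations, and for $s$ in the range $\{0, \dots, k-2\}$ this dimension is $1$, matching the rank of $\Fil^0\Dcris(\T_f)$ that governs $H^1_2$ via the appearance of the single basis vector $\nu_1$ in the proof of Proposition~\ref{prop:local-at-p-comparison}. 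Since we already have one containment and the two groups have the same corank (and $H^1_f(\Qp, \Afs) \cong H^1_f(\Qp, \Afs) \otimes E/\cO_E$ is divisible, as is $H^1_2$), the containment is forced to be an equality. Feeding $H^1_2(\Qp, \Afs) = H^1_f(\Qp, \Afs)$ back into the definitions of the two Selmer groups as kernels of the same global-to-local map completes the proof.

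I expect the main obstacle to be the reverse inclusion $H^1_f \subseteq H^1_2$ at $p$: Proposition~\ref{prop:local-at-p-comparison} is phrased as a statement about \emph{some} lift $z$, so one needs to make sure that the vanishing of $\chi^s(\Col_{f,2}(z))$ can be upgraded to the stronger statement needed to conclude membership in the image of $\ker\Col_{f,2}$ — and the corank comparison, while standard, requires correctly invoking the Bloch--Kato formula for $\dim H^1_f(\Qp, \Vfs)$ in the given weight range and checking that the relevant Hodge filtration jump lies in the interval determined by $s \in \{0,\ldots,k-2\}$. Everything else — the agreement of local conditions away from $p$, the reduction to the local statement, and the untwisting — is routine.
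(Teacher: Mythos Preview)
Your approach is essentially the paper's: both reduce to comparing local conditions at $p$ and invoke Proposition~\ref{prop:local-at-p-comparison}, with the conditions away from $p$ agreeing by definition. The paper's proof is two sentences and simply asserts that the local conditions at $p$ are ``equivalent'' by that proposition without isolating the reverse inclusion; your added corank comparison is a reasonable way to justify that step, but it is extra care rather than a different argument.
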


\begin{proof}
For $\ell \neq p$ the local conditions $H^1_i(\Afs/\QQ)$ and $H^1_f(\Afs/\QQ)$ are identical, and at $p$ the local conditions are equivalent by Proposition \ref{prop:local-at-p-comparison}, giving the desired result.
\end{proof}

\begin{remark}
By Remark~\ref{rk:swap1and2}, the same is true for $\Sel_1(\Afs / \QQ) $ if $a_p\ne\epsilon(p) p^{s}+p^{k-s-2}$.
\end{remark}

\section{Submodules of finite index}\label{section:submodulesfinite}
The goal of this section is to prove the following theorem, which is a generalization of  \cite[Proposition 4.8]{Gr} (in the ordinary case) and \cite[Theorem~3.14]{kim13} (in the supersingular case).
\begin{theorem}\label{thm:finiteindex}
For $i\in\{1,2\}$ and $\theta\in\hat{\Delta}$, $\Sel_i(A_f/\Qinf)^\theta$  contains no proper $\Lambda$-submodule of finite index.
\end{theorem}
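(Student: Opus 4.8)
The plan is to follow the strategy of Greenberg \cite[Proposition 4.8]{Gr} and Kim \cite[Theorem 3.14]{kim13}, adapted to the signed setting. The general principle is this: if a cofinitely generated cotorsion $\Lambda$-module $\cX$ (here $\cX = \Sel_i(A_f/\Qinf)^{\theta,\vee}$, the Pontryagin dual) has no nonzero finite submodule as a $\Lambda$-module, equivalently if $\Sel_i(A_f/\Qinf)^\theta$ is a divisible $\Lambda$-module, then it has no proper $\Lambda$-submodule of finite index. The standard route to this is to show that the natural map $H^1_\Sigma(\Qinf,A_f)^\theta \to \bigoplus_{v} H^1_{/i}(\Qvinf,A_f)^\theta$ appearing in the definition of the Selmer group is surjective, and then to combine this with a vanishing statement for $H^2$ together with the fact that $H^1_\Sigma(\Qinf, A_f)$ itself has no proper submodule of finite index (this last being a consequence of \textbf{(inv)} and the structure of Galois cohomology over $\Qinf$, cf. Greenberg's work on the cohomology of $p$-adic Lie extensions). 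Concretely, one examines the defining exact sequence
\[
0 \to \Sel_i(A_f/\Qinf)^\theta \to H^1_\Sigma(\Qinf,A_f)^\theta \to \bigoplus_{v} H^1_{/i}(\Qvinf,A_f)^\theta
\]
and argues that the cokernel of the last map, together with the finiteness properties of the outer terms, forces the Selmer group to contain no proper finite-index submodule.

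First I would assemble the input pieces already available in the excerpt: Corollary~\ref{cor:vanishH2} gives $H^2_\Sigma(\Qinf,A_f) = 0$ (hence the same on any $\theta$-component), and Corollary~\ref{cor:surjloc} gives the surjectivity of $H^1_\Sigma(\Qinf,\Afs) \to \prod_v H^1_{/i}(\Qvinf,\Afs)$. Twisting via Remark~\ref{selmer-twists}, it suffices to treat $s=0$, and we may work componentwise over $\hat\Delta$. The key structural fact I would then invoke is that $H^1_\Sigma(\Qinf,A_f)^\theta$, being the dual of $\varprojlim H^1_\Sigma(\QQ(\mu_{p^n}),\T_f)^\theta$, has no proper $\Lambda$-submodule of finite index: this follows because $\varprojlim H^1_\Sigma(\QQ(\mu_{p^n}),\T_f)^\theta$ has no nonzero $\Lambda$-torsion submodule (a consequence of \textbf{(inv)} together with the vanishing of $H^0$ and a weak Leopoldt-type argument, as in Greenberg's work), so its dual is $\Lambda$-divisible. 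Next, because the local quotients $H^1_{/i}(\Qvinf,A_f)^\theta$ for $v \nmid p$ are $\Lambda$-cotorsion with zero $\mu$-invariant (Remark~\ref{rk:mu}), and the one at $p$ is the dual of $\image\Col_{f,i}^\theta \subset \Lambda$ which is likewise torsion-free-dual hence divisible, the whole target $\bigoplus_v H^1_{/i}(\Qvinf,A_f)^\theta$ is a $\Lambda$-divisible module. A short diagram chase in the exact sequence above, using that both the ambient $H^1_\Sigma$ and the local target are divisible while the map between them is surjective, then shows the kernel $\Sel_i(A_f/\Qinf)^\theta$ is divisible, i.e. has no proper $\Lambda$-submodule of finite index.

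I expect the main obstacle to be the verification that $H^1_\Sigma(\Qinf, A_f)^\theta$ has no proper $\Lambda$-submodule of finite index — i.e., that the corresponding Iwasawa cohomology module $\varprojlim H^1_\Sigma(\QQ(\mu_{p^n}), \T_f)^\theta$ is torsion-free as a $\Lambda$-module. This is exactly where hypothesis \textbf{(inv)} (giving $A_f(m)^{G_{\Qpinf}} = 0$, hence the vanishing of relevant $H^0$'s globally, which propagates to the statement that the global $H^1$ has no finite submodule via Greenberg's general lemma on cohomology of $\Zp$-extensions) does the real work, and one must check that the non-ordinary setting introduces no new difficulty here, since this part of the argument is purely about the global Galois cohomology of $A_f$ and does not see the local condition at $p$. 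The secondary technical point is handling the local term at $p$: one needs that $H^1_{/i}(\Qpinf, A_f)^\theta$, which by Lemma~\ref{lem:vanishH1i} and the discussion around \eqref{eq:PT1} is (dual to) $\image \Col_{f,i}^\theta \subseteq \Lambda$, is $\Lambda$-divisible — but a nonzero submodule of $\Lambda$ has no finite-index submodule only after checking it is $\Lambda$-torsion-free, which is automatic, so its Pontryagin dual is divisible. Once these two finiteness/divisibility inputs are in place, the diagram chase is routine and the theorem follows.
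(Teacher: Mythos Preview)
Your overall strategy is in the right spirit, but the argument has two genuine gaps that make it fail as written.

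First, the diagram chase at the end is false. From a short exact sequence $0 \to A \to B \to C \to 0$ of cofinitely generated $\Lambda$-modules in which $B$ and $C$ have no proper $\Lambda$-submodule of finite index, it does \emph{not} follow that $A$ has the same property. Dually: if $B^\vee$ and $C^\vee$ have no nonzero finite $\Lambda$-submodule, $A^\vee$ may still have one. Take $B^\vee = \Lambda$ and $C^\vee = (\vpi, X)$; both are torsion-free, yet $A^\vee = \Lambda/(\vpi,X)$ is finite and nonzero. (At the level of abelian groups this is the familiar fact that the kernel of a surjection of divisible groups need not be divisible, e.g.\ $\Qp/\Zp \xrightarrow{\times p} \Qp/\Zp$.) Relatedly, your claim that the local quotients at $v\nmid p$ are ``$\Lambda$-divisible'' because they are cotorsion with $\mu=0$ is unjustified: cotorsion with $\mu=0$ only says the Pontryagin dual is finitely generated over $\Zp$, and such a module can certainly be finite and nonzero.

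Second, your justification for the global input is wrong: $H^1_\Sigma(\Qinf,A_f)^\theta$ is \emph{not} the Pontryagin dual of $\varprojlim H^1_\Sigma(\QQ(\mu_{p^n}),\T_f)^\theta$ --- global Poitou--Tate duality does not give such an identification. The conclusion you want (that $H^1_\Sigma$ has no proper finite-index $\Lambda$-submodule) is nonetheless true, but it is proved differently in the paper (Proposition~\ref{prop:H1sub}), via Hochschild--Serre and the divisibility of $H^2_\Sigma(\QQ,A)$.

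The paper avoids the diagram-chase pitfall by a genuinely different manoeuvre: it first twists by $\kappa^s$ for a carefully chosen $s$, and then works at the level of $\Gamma_0$-(co)invariants rather than with the full $\Lambda$-modules. The crucial extra step, absent from your sketch, is Propositions~\ref{prop:surjectivefinite} and~\ref{prop:surjinv}: one finds $s$ so that the dual Selmer group over $\QQ$ is finite, which forces the localisation map $H^1_\Sigma(\Kinf,A)^{\Gamma_0}\to \prod_v H^1_{/i}(\Kvinf,A)^{\Gamma_0}$ to be surjective. Feeding this into the long exact sequence of $\Gamma_0$-cohomology attached to the short exact sequence of Corollary~\ref{cor:surjloc} shows that $\Sel_i(A/\Kinf)_{\Gamma_0}$ injects into $H^1_\Sigma(\Kinf,A)_{\Gamma_0}$, and the latter is divisible by Proposition~\ref{prop:H1sub}; one then concludes. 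The freedom to twist by $\kappa^s$ is exactly what lets one arrange the finiteness needed to close the argument.
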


Let us first prove the following generalization of  \cite[Proposition~18]{kim09}. Our proof is based on that of \cite[Proposition~5]{Gr89}.

\begin{proposition}\label{prop:H1sub}
Let $\theta\in\hat\Delta$ be any character and $s\in\ZZ$. The $\Lambda$-module $H^1_\Sigma(\Kinf,A_f(\kappa^s\theta^{-1}))$ has no non-trivial submodule of finite index.
\end{proposition}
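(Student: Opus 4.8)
The statement to prove is that $H^1_\Sigma(\Kinf, A_f(\kappa^s\theta^{-1}))$ has no non-trivial finite-index $\Lambda$-submodule. The plan is to follow the classical strategy of Greenberg \cite{Gr89} and Kim \cite{kim09}: a cofinitely generated $\Lambda$-module $X$ has no proper submodule of finite index precisely when its Pontryagin dual has no nonzero finite $\Lambda$-quotient, equivalently when $X^\vee$ has no nonzero finite submodule after passing to the maximal ideal — more usefully, one shows directly that $H^1_\Sigma$ is a \emph{cofree} (i.e. divisible, cofinitely generated) $\Lambda$-module, or at least that its dual has projective dimension behavior forcing the absence of finite submodules. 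Concretely, I would argue that $H^2_\Sigma(\Kinf, A_f(\kappa^s\theta^{-1})) = 0$ — which is essentially Corollary~\ref{cor:vanishH2} after accounting for the twist and the $\theta$-isotypic decomposition — and then feed this into the global Euler characteristic / global duality machinery to conclude that $H^1_\Sigma$ is $\Lambda$-cofree, hence divisible, hence has no proper finite-index submodule.

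\textbf{Key steps, in order.} First I would reduce to $s = 0$ by twisting (as in the proof of Lemma~\ref{lem:vanishH1i}, using Remark~\ref{selmer-twists}), so that it suffices to treat $H^1_\Sigma(\Kinf, A_f(\theta^{-1}))$, or equivalently the $\theta$-isotypic component $H^1_\Sigma(\Qinf, A_f)^\theta$. Second, I would invoke hypothesis \textbf{(irred)} to ensure $H^0_\Sigma(\Kinf, A_f(\theta^{-1})) = 0$ (the residual representation is irreducible, hence has no Galois invariants, and this persists up the cyclotomic tower and for the twist). Third, the crucial input: $H^2_\Sigma(\Kinf, A_f(\theta^{-1})) = 0$; for $s$ in the relevant range this is Corollary~\ref{cor:vanishH2}, and for general $s$ and general $\theta$ the same Poitou--Tate argument via \eqref{eq:PT2} together with Lemma~\ref{lem:vanishH1i} and the vanishing of $\varprojlim H^0(\QQ(\mu_{p^n})_v, \Tfs)$ applies verbatim. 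Fourth, with $H^0 = H^2 = 0$, the module $\mathcal{X} := H^1_\Sigma(\Kinf, A_f(\theta^{-1}))^\vee$ has projective dimension at most $1$ over $\Lambda$ and, by the global Euler--Poincaré formula, is in fact a free $\Lambda$-module (or one checks it has no nonzero pseudo-null submodule and no finite submodule by a local cohomological computation as in \cite[Proposition~5]{Gr89}). Fifth, a free (or more weakly, torsion-free with the right local properties) dual means $H^1_\Sigma$ is divisible, and a divisible cofinitely generated $\Lambda$-module has no proper submodule of finite index — this is the final line.

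\textbf{Main obstacle.} The delicate point is Step 4: passing from the vanishing of $H^0$ and $H^2$ to the structural conclusion about $H^1_\Sigma$. In Greenberg's ordinary setting and in Kim's supersingular setting one has a clean statement that $H^1_\Sigma$ is $\Lambda$-cofree under exactly these hypotheses, but making this precise here requires care with: (i) the weight-$k$ coefficients (so $A_f$ is not simply $E[p^\infty]$ for an elliptic curve) and whether the local terms at $v \mid p$ and $v \nmid p$ behave as needed; (ii) the interaction between the $\Delta$-isotypic projection and the integral structure, since $e_\theta$ need not be defined over $\cO_E$ unless $\cO_E$ contains the relevant roots of unity — one may need to enlarge $E$ and then descend, or work with $\cO_E[\Delta]$-modules throughout; and (iii) confirming that hypothesis \textbf{(inv)} (which gives the injectivity used in \eqref{eq:K-short-exact-sequence} and Lemma~\ref{lem:injsurj}) supplies the control over $H^0$ of the relevant local and global modules in the twisted setting. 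I expect the bulk of the work to be bookkeeping around these three points, with the cohomological skeleton — $H^2 = 0 \Rightarrow$ projective dimension $\le 1 \Rightarrow$ cofree $\Rightarrow$ divisible $\Rightarrow$ no finite-index submodule — being essentially formal once the inputs are assembled.
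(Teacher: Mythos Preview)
Your overall strategy is correct and will work, but the paper takes a shorter and slightly different route, so let me compare.

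You aim to prove that $H^1_\Sigma(\Kinf,A)$ is $\Lambda$-\emph{cofree} by assembling the vanishing of both $H^0_\Sigma(\Kinf,A)$ and $H^2_\Sigma(\Kinf,A)$ and then invoking a global Euler--Poincar\'e/projective-dimension argument. This is valid, and you are right that the vanishing of $H^2_\Sigma(\Kinf,A)$ follows from Corollary~\ref{cor:vanishH2} after passing to isotypic components. However, the extra input $H^0_\Sigma(\Kinf,A)=0$ that you plan to extract from \textbf{(irred)} is not entirely automatic (irreducibility over $G_\QQ$ does not immediately preclude $G_{\Kinf}$-invariants in a twist), and in any case it is unnecessary here.

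The paper instead argues only at the level of the $\Gamma_0$-coinvariants. Using the Hochschild--Serre spectral sequence together with Corollary~\ref{cor:vanishH2} (which kills $H^2_\Sigma(\Kinf,A)$), one obtains an isomorphism
\[
H^1_\Sigma(\Kinf,A)_{\Gamma_0}\;\cong\;H^1\bigl(\Gamma_0,\,H^1_\Sigma(\Kinf,A)\bigr)\;\cong\;H^2_\Sigma(\QQ,A).
\]
By \cite[Lemma~4.5]{Gr} the group $H^2_\Sigma(\QQ,A)$ is divisible, so the coinvariants $H^1_\Sigma(\Kinf,A)_{\Gamma_0}$ are divisible. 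Greenberg's argument in \cite[proof of Proposition~5]{Gr89} then gives the conclusion directly: divisibility of the $\Gamma_0$-coinvariants already forces the absence of proper finite-index $\Lambda$-submodules.

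So the trade-off is this: your approach yields the stronger structural statement (cofreeness) at the cost of an additional hypothesis check on $H^0$; the paper's approach uses only the $H^2$-vanishing and the divisibility of $H^2_\Sigma$ over the base, which is a weaker input and a shorter argument. Both reach the goal; the paper's route avoids the bookkeeping you flagged in your ``main obstacle'' concerning $H^0$ and the $\Delta$-isotypic decomposition.
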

\begin{proof}
By the Hochschild-Serre spectral sequence and Corollary~\ref{cor:vanishH2}, we have the isomorphism
\[
H^1(\Gamma_0,H^1_\Sigma(\Kinf,A_f(\kappa^s\theta^{-1})))\cong H^2_\Sigma(\QQ,A_f(\kappa^s\theta^{-1})).
\]
By \cite[Lemma 4.5]{Gr}, $H^2_\Sigma(\QQ,A_f(\kappa^s\theta^{-1}))$ is a divisible group, so this tells us that
\[
H^1_\Sigma(\Qinf,A_f(\kappa^s\theta^{-1}))_{\Gamma_0}
\]
is divisible. Hence we may conclude as in \cite[proof of Proposition~5]{Gr89}.
\end{proof}

The following is a generalization of \cite[Proposition~3.8]{kim13}.

\begin{proposition}\label{prop:surjectivefinite}
Let $\theta\in\hat\Delta$ and $i\in\{1,2\}$. There exists an integer $s$, such that the natural map
\[
H^1_\Sigma(\QQ,A_f(\kappa^s\theta^{-1}))\rightarrow \prod_{v|\ell,\ell\in\Sigma} H^1_{/i}(\QQ_v,A_f(\kappa^s\theta^{-1}))
\]
is surjective.
\end{proposition}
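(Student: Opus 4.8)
The plan is to use the Poitou--Tate machinery of \eqref{eq:PT2} (specialized to the finite level $n=0$, i.e. the field $\QQ$), combined with a judicious twist. The cokernel of the global-to-local map we want to be surjective is controlled, via \eqref{eq:PT2finite} with $\Tfs$ replaced by the appropriate twist $A_f(\kappa^s\theta^{-1})$, by a dual Selmer group for the Tate twist of the adjoint lattice, namely a group of the form $H^1_i(\QQ,T_f(1)(\kappa^{-s}\theta))^\vee$ (or its $H^2$ companion). So the statement will follow once I can show this dual term vanishes for a suitable choice of $s$.

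First I would write down the Poitou--Tate exact sequence at the level of $\QQ$ (rather than $\QQ(\mu_{p^n})$) for the module $A_f(\kappa^s\theta^{-1})$; this is the $n=0$, $\Delta$-isotypic-component version of \eqref{eq:PT2finite}. The relevant piece reads
\[
H^1_\Sigma(\QQ,A_f(\kappa^s\theta^{-1}))\rightarrow \bigoplus_{v|\ell,\ell\in\Sigma} H^1_{/i}(\QQ_v,A_f(\kappa^s\theta^{-1}))\rightarrow H^1_i(\QQ,T_f(1)(\kappa^{-s}\theta))^\vee\rightarrow H^2_\Sigma(\QQ,A_f(\kappa^s\theta^{-1})),
\]
so it suffices to kill the third term for some $s$. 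Now $H^1_i(\QQ,T_f(1)(\kappa^{-s}\theta))$ is a subgroup of $H^1_\Sigma(\QQ,T_f(1)(\kappa^{-s}\theta))$, and for a twist by a sufficiently negative (or otherwise generic) power of $\kappa$ the latter group is finite, indeed zero: this is the standard fact that the Galois cohomology $H^1_\Sigma(\QQ,V_f(1)(j))$ vanishes for all but finitely many integers $j$ (equivalently, one uses Tate's global Euler characteristic formula together with the vanishing of $H^0$ and $H^2$ for generic twists, the latter following from Corollary~\ref{cor:vanishH2} specialized appropriately, or from \cite[Theorem~12.4]{kato04} and \textbf{(irred)}). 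More precisely, I would argue: for all but finitely many $s$, $H^0(\QQ_v,A_f(\kappa^s\theta^{-1}))$ vanishes at every $v\in\Sigma$ and $H^2_\Sigma(\QQ,A_f(\kappa^s\theta^{-1}))=0$ (the latter by the same divisibility/corank arguments used for Corollary~\ref{cor:vanishH2} and Lemma~4.5 of \cite{Gr}, combined with the fact that the $\Gamma_0$-coinvariants compute $H^2_\Sigma$ over $\QQ$); then the global Euler characteristic formula forces $H^1_\Sigma(\QQ,A_f(\kappa^s\theta^{-1}))$ to have the expected corank, and more to the point the dual term $H^1_i(\QQ,T_f(1)(\kappa^{-s}\theta))^\vee$ becomes zero because $T_f(1)(\kappa^{-s}\theta)$ has no $\QQ$-rational cohomology for generic $s$.

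I expect the main obstacle to be pinning down precisely which finiteness/vanishing statement to invoke for the dual term $H^1_i(\QQ,T_f(1)(\kappa^{-s}\theta))$: one must be careful that $H^1_i$ is a local condition defined via the Coleman map, so one cannot simply quote a black-box statement about full $H^1_\Sigma$; rather one shows $H^1_i\subseteq H^1_\Sigma$ and that the ambient group vanishes for generic $s$. A clean way around this is to observe that $H^1_i(\QQ,T_f(1)(\kappa^{-s}\theta))$ injects into the Iwasawa-theoretic module $\varprojlim H^1_i(\QQ(\mu_{p^n}),T_f(1))$-type object, which Lemma~\ref{lem:vanishH1i} shows is zero; alternatively, and probably most efficiently, one uses that $H^1_i(\QQ,T_f(1)(\kappa^{-s}\theta))$ sits inside $H^1_\Sigma(\QQ,T_f(1)(\kappa^{-s}\theta))$ and that for $s\ll 0$ the whole of $H^1_\Sigma(\QQ,T_f(1)(\kappa^{-s}\theta))$ vanishes by a weight argument (the Hodge--Tate weights of the twist are then both negative, so $H^1_f$ coincides with $H^1$ locally at $p$ and a standard argument with the Euler characteristic gives vanishing). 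Once the dual term is killed, surjectivity is immediate from the exact sequence, completing the proof.
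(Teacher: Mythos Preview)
Your reduction via Poitou--Tate to the term $H^1_i(\QQ,\T_f(\kappa^{-s}\theta))^\vee$ is correct, but the method you propose for killing this term does not work. The central claim---that $H^1_\Sigma(\QQ,V_f(1)(j))$ (or the lattice version) vanishes for all but finitely many $j$---is false. Tate's global Euler characteristic formula over $\QQ$ gives
\[
\dim_E H^0_\Sigma - \dim_E H^1_\Sigma + \dim_E H^2_\Sigma = -\dim_E V^{c=-1},
\]
and since $V_f$ is odd, complex conjugation has eigenvalues $\pm 1$ on every twist $V_f(j)$; hence the right-hand side is $-1$, not $0$. When $H^0=H^2=0$ (which does hold for generic $j$), one concludes $\dim_E H^1_\Sigma(\QQ,V_f(j))=1$, not $0$. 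So the ambient $H^1_\Sigma$ never vanishes, and your containment $H^1_i\subset H^1_\Sigma$ gives no information. The appeal to Lemma~\ref{lem:vanishH1i} is also insufficient: that lemma gives $\varprojlim_n H^1_i(\QQ(\mu_{p^n}),\T_f)=0$, but vanishing of an inverse limit says nothing about the individual finite-level terms.

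The paper avoids this problem by not trying to force the \emph{compact} Selmer group $H^1_i(\QQ,\T_f(\kappa^{-s}\theta))$ to vanish. Instead, it invokes \cite[Proposition~4.13]{Gr}, which (using the orthogonality of Lemma~\ref{lem:orthfinite}) identifies the cokernel of the global-to-local map with $\As(\theta\omega^s)^{G_\QQ}$ \emph{provided} the discrete dual Selmer group $\Sel_i(\As(\theta\omega^s)/\QQ)$ is merely \emph{finite}, where $\As=\Tfs\otimes E/\cO_E=A_{f,k-s-2}$. The invariants vanish by \textbf{(inv)}. Finiteness of $\Sel_i(\As(\theta\omega^s)/\QQ)$ is then obtained by a control-theorem argument: one relates it to $\Sel_i(\As(\theta\omega^s)/\Kinf)^{\Gamma_0}$ via a commutative diagram with finite kernel/cokernel in the vertical maps, and observes that $\Sel_i(\As(\theta\omega^s)/\Kinf)\cong \Sel_i(A_f/\Qinf)^{\eta}\otimes\kappa^{k-s-2}$ for a fixed $\eta\in\hat\Delta$; since the latter is $\Lambda$-cotorsion by \textbf{(tor)}, its $\Gamma_0$-invariants are finite for all but finitely many $s$. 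Thus the essential input you are missing is hypothesis \textbf{(tor)} combined with a descent from $\Kinf$ to $\QQ$, rather than any vanishing statement for $H^1_\Sigma$.
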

\begin{proof}
Let $s$ be any integer. The Pontryagin dual of $A_f(\kappa^s\theta^{-1})$ is $\T_f(\kappa^{-s}\theta)=\Tfs(\theta\omega^s)$.
Let $\As=\Tfs\otimes E/\cO_E=A_{f,k-s-2}$. Then, the orthogonality of the local conditions as proved in Lemma~\ref{lem:orthfinite} allows us to apply \cite[Proposition~4.13]{Gr}, which tells us that if $\Sel_i(\As(\theta\omega^s)/\QQ)$ is finite, then the cokernel of the map
 \[
 H^1_{\Sigma}(\QQ,A_f(\kappa^s\theta^{-1}))\rightarrow \prod H^1_{/i}(\Qp,A_f(\kappa^s\theta^{-1}))
 \]
 would be isomorphic to $\As(\theta\omega^s)^{G_\QQ}$. But the latter is zero thanks to \textbf{(inv)}. In other words, the surjectivity of the map above would follow from the finiteness of  $\Sel_i(\As(\theta\omega^s)/\QQ)$.

Consider the Selmer group
\[
\Sel_i(\As(\theta\omega^s)/\Kinf)\cong
\Sel_i(A_f(\theta\omega^{k-2})/K_\infty)\otimes\kappa^{k-s-2}.\]
Let $\eta=\theta^{-1}\omega^{2-k}$ (which is independent of $s$), then this can be rewritten as
\[
\Sel_i(A_f/\QQ_\infty)^\eta\otimes\kappa^{k-s-2}
\]
as explained in Remark~\ref{selmer-twists}. Since \textbf{(tor)} says that $\Sel_i(A_f/\QQ_\infty)^\eta$ is $\Lambda$-cotorsion, for all but finitely many $s\in\ZZ$,
\[
\left(\Sel_i(A_f/\QQ_\infty)^\eta\otimes\kappa^{k-s-2}\right)^{\Gamma_0}
\]
is finite. Therefore, in order to show that $\Sel_i(\As(\theta\omega^s)/\QQ)$ is finite, it suffices to find $s$ satisfying this and that
 the kernel and cokernel of the restriction map
 \[
 \Sel_i(\As(\theta\omega^s)/\QQ)\rightarrow \Sel_i(\As(\theta\omega^s)/\Kinf)^{\Gamma_0}
 \]
 are both finite.

 To show this, we consider the commutative diagram
 \[\begin{array}{ccccccc}
 0&\rightarrow&\Sel_i(A/\QQ)&\rightarrow &H^1_\Sigma(\QQ,A)&\xrightarrow{\psi}&\prod H^1_{/i}(\QQ_\ell,A)\\
 &&\downarrow&&\downarrow&&\downarrow\\
 0&\rightarrow&\Sel_i(A/\Kinf)^{\Gamma_0}&\rightarrow &H^1_\Sigma(\Kinf,A)^{\Gamma_0}&\rightarrow&\prod H^1_{/i}(\Kvinf,A)^{\Gamma_0},
 \end{array}
 \]
 where $A:=\As(\theta\omega^s)$.
 The middle vertical map is an isomorphism thanks to \textbf{(inv)} and the inflation-restriction exact sequence. We shall show that the kernel of the third vertical map is finite; then, upon replacing $\prod H^1_{/i}(\QQ_v,A)$ in the diagram with $\mathrm{Im}\ \psi$, the result would then follow from the snake lemma.

Let $\ell$ be a place of $\QQ$, and let $v$ be one of the finitely many places of $\Kinf$ lying over $\ell$. By the inflation-restriction exact sequence, the kernel of the map $H^1(\QQ_\ell,A) \rightarrow H^1(\Kvinf,A)$ is $H^1(\Kvinf/\QQ_\ell,A^{G_{\Kvinf}})$. If $\ell=p$, this group is trivial by \textbf{(inv)}, so we may suppose that $\ell \neq p$. The kernel is clearly trivial if $\ell$ splits completely over $\Kinf$, so assume otherwise. Let $\gamma_v$ be a topological generator for $\mathrm{Gal}(\Kvinf / \QQ_\ell)$ and write $B=A^{G_{\Kvinf}}$, so $H^1(\Kvinf/\QQ_\ell,A^{G_{\Kvinf}}) \simeq B / (\gamma_v -1)B$. We have an exact sequence
\[
0 \rightarrow A^{G_{\QQ_\ell}} \rightarrow B  \xrightarrow{\gamma_v -1} B \rightarrow B / (\gamma_v-1)B \rightarrow 0.
\]
Note that $A^{G_{\QQ_\ell}}$ is finite for all but finitely many $s$. In this case,   we have that $B_{div} \subset (\gamma_v - 1)B$, where $B_{div}$ is the maximal divisible subgroup of $B$. It follows that $B/(\gamma_v-1)B$ is bounded by $B/B_{div}$, which is finite. Hence, this finishes the proof.
\end{proof}

%\remark{In the proof above, the fact that $s$ can be chosen so that both $\Sel(\As/\Kinf)^{\Gamma_0}$ and $\As^{G_{\QQ_v}}$ are finite is widely known to experts, but until recently, it was difficult to find a proof in the literature. This gap has recently been filled by \cite[Appendix B]{kidwell16}.}

\begin{proposition}\label{prop:surjinv}
Let $\theta\in\hat\Delta$. There exists an integer $s$ such that the natural map
\[
H^1_\Sigma(\Kinf,A_f(\kappa^s\theta^{-1}))^{\Gamma_0}\rightarrow\prod_{v|\ell,\ell\in\Sigma} H^1_{/i}(\Kvinf,A_f(\kappa^s\theta^{-1}))^{\Gamma_0}
\]
is surjective.
\end{proposition}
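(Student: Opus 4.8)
The plan is to descend Proposition~\ref{prop:surjectivefinite} along the cyclotomic tower. Fix $s$ as provided by Proposition~\ref{prop:surjectivefinite} and put $A=A_f(\kappa^s\theta^{-1})$. I would reuse the commutative diagram from the proof of that proposition,
\[\begin{array}{ccccccc}
0&\to&\Sel_i(A/\QQ)&\to &H^1_\Sigma(\QQ,A)&\xrightarrow{\psi}&\prod_{\ell} H^1_{/i}(\QQ_\ell,A)\\
&&\downarrow&&\downarrow&&\downarrow{b}\\
0&\to&\Sel_i(A/\Kinf)^{\Gamma_0}&\to &H^1_\Sigma(\Kinf,A)^{\Gamma_0}&\xrightarrow{\phi}&\prod_{v} H^1_{/i}(\Kvinf,A)^{\Gamma_0}
\end{array}\]
where the middle vertical map is an isomorphism (via \textbf{(inv)} and inflation--restriction) and $\psi$ is surjective by the choice of $s$. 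Since the middle map and $\psi$ are surjective, $\image\phi=\image(b\circ\psi)=\image b$, so it is enough to show that $b=\prod_\ell b_\ell$ is surjective.

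For $\ell\neq p$ this is formal: since $\Kinf/\QQ$ is unramified away from $p$, $\Kvinf/\QQ_\ell$ is unramified with $\Gal(\Kvinf/\QQ_\ell)\cong\Zp$, so $\Gal(\QQ_\ell^{\ur}/\Kvinf)$ has pro-order prime to $p$; as $A$ is $p$-primary this gives $H^1_{\ur}(\Kvinf,A)=0$, hence $H^1_{/i}(\Kvinf,A)=H^1(\Kvinf,A)$. Since $\mathrm{cd}_p\Gal(\Kvinf/\QQ_\ell)\le 1$, the restriction map $H^1(\QQ_\ell,A)\to H^1(\Kvinf,A)^{\Gal(\Kvinf/\QQ_\ell)}$ is onto, and $H^1_{\ur}(\QQ_\ell,A)$ maps to $H^1_{\ur}(\Kvinf,A)=0$; identifying $\prod_{v\mid\ell}H^1_{/i}(\Kvinf,A)^{\Gamma_0}$ with $H^1(\Kvinf,A)^{\Gal(\Kvinf/\QQ_\ell)}$ via Shapiro's lemma, we obtain that $b_\ell$ is surjective. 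No extra hypothesis on $s$ is used here.

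The substance is at $\ell=p$, where $\Gal(\Kpinf/\Qp)=\Gamma_0$. Here \textbf{(inv)} gives $A^{G_{\Kpinf}}=0$, so inflation--restriction yields an isomorphism $H^1(\Qp,A)\xrightarrow{\sim}H^1(\Kpinf,A)^{\Gamma_0}$; and I would show that $H^1(\Gamma_0,H^1_i(\Kpinf,A))=0$. For the latter, local Tate duality at each layer $K_{n,p}$ together with the (twisted form of the) orthogonality of Lemma~\ref{lem:orthfinite} identifies $H^1_i(\Kpinf,A)^\vee$ with $\varprojlim_n H^1_{/i}(K_{n,p},T)$ for $T$ the Kummer-dual representation, and by the computation recalled in the proof of Lemma~\ref{lem:vanishH1i} this inverse limit is, up to a twist, a submodule of $\image\Col_{f,i}\subset\Lambda(\Gamma)$; since $\gamma_0-1$ is a non-zero-divisor there, $H^1_i(\Kpinf,A)^\vee$ has no $(\gamma_0-1)$-torsion, so $(H^1_i(\Kpinf,A)^\vee)^{\Gamma_0}=0$ and $H^1(\Gamma_0,H^1_i(\Kpinf,A))=0$. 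Plugging this into the $\Gamma_0$-cohomology sequence of $0\to H^1_i(\Kpinf,A)\to H^1(\Kpinf,A)\to H^1_{/i}(\Kpinf,A)\to0$ gives $H^1(\Kpinf,A)^{\Gamma_0}\twoheadrightarrow H^1_{/i}(\Kpinf,A)^{\Gamma_0}$; composing with the isomorphism above shows that $b_p$ is surjective.

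The main obstacle is the place $p$: to know that $H^1_i(\Kpinf,A)^\vee$ really is (essentially) a submodule of $\Lambda$, one must transport the constructions of \S\ref{S:signed}--\S\ref{section:relation-non-primitive} (Wach modules, Coleman maps, the explicit reciprocity law, orthogonality of the signed local conditions) faithfully through the $\Delta$- and $\kappa^s\theta^{-1}$-twists and down to the finite layers $K_{n,p}$. Everything at $\ell\neq p$ is elementary. A final bookkeeping check is that the $s$ coming from Proposition~\ref{prop:surjectivefinite} is simultaneously compatible with the finiteness statements invoked in that proposition.
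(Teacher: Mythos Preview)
Your proof is correct and follows essentially the same route as the paper's: reduce to surjectivity of the local maps $b_\ell$ via the commutative diagram, handle $\ell\neq p$ by a cohomological-dimension-one argument, and at $p$ use duality to identify $H^1_i(\Kpinf,A)^\vee$ with (a twist of) $\image\Col_{f,i}^\theta\subset\Lambda$, whence $H^1_i(\Kpinf,A)_{\Gamma_0}=0$ and the long exact sequence finishes. The paper is terser at $\ell\neq p$ (citing \cite{Gr} rather than proving $H^1_{\ur}(\Kvinf,A)=0$ directly) and writes the duality at $p$ as $H^1_i(\Kpinf,A)\cong\bigl(\image(\Col_{f,i})^\theta\otimes\kappa^{-s}\bigr)^\vee$ outright, but the substance is identical.
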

\begin{proof}
 By Proposition \ref{prop:surjectivefinite} we can pick $s$ such that the map
 \[
H^1_\Sigma(\QQ,A)\rightarrow \prod_v H^1_{/i}(\QQ_v,A)
\]
 is surjective, where $A=A_f(\kappa^s\theta^{-1})$. As $\Gamma_0$ has cohomological dimension $1$, the Hochschild-Serre spectral sequence shows that the map
 \[
H^1_\Sigma(\QQ,A) \rightarrow H^1_\Sigma(\Kinf,A)^{\Gamma_0}
 \]
 is also surjective. Since we have the commutative diagram
\[
\begin{array}{ccc}
H^1_\Sigma(\QQ,A)&\rightarrow&\prod H^1_{/i}(\QQ_\ell,A)\\
\downarrow&&\downarrow\\
H^1_\Sigma(\Kinf,A)^{\Gamma_0}&\rightarrow &\prod H^1_{/i}(\Kvinf,A)^{\Gamma_0}
\end{array}
\]
it suffices to show that for every $\ell$, the natural map
\[
H^1_{/i}(\QQ_\ell,A) \rightarrow \prod_{v \mid \ell} H^1_{/i} (\Kvinf,A)^{\Gamma_0}
\]
is surjective. For $\ell \neq p$, this similarly follows from the fact that $\Gamma_0$ has cohomological dimension 1 (see also the proof of \cite[Lemma 4.7]{Gr}).

We now consider $\ell=p$. By duality, we have the isomorphism
\[H^1_i(\Kpinf,A)=H^1_i(\Kpinf,A_f(\kappa^s\theta^{-1}))\cong\left( \image(\Col_{f,i})^\theta\otimes\kappa^{-s}\right)^{\vee}.\]
Since $\image(\Col_{f,i})^\theta\subset\Lambda$, we have $\left(\image(\Col_{f,i})\otimes\kappa^{-s}\right)^{\Gamma_0}=0$. Consequently,
\begin{equation}\label{eq:coinvariant}
H^1_i(\Kpinf,A)_{\Gamma_0}=0.
\end{equation}
Consider the short exact sequence
\[
0\rightarrow H^1_i(\Kpinf,A)\rightarrow H^1(\Kpinf,A)\rightarrow H^1_{/i}(\Kpinf,A)\rightarrow 0.
\]
We deduce that
\[
H^1_{/i}(\Kpinf,A)^{\Gamma_0}\cong\frac{ H^1(\Kpinf,A)^{\Gamma_0}}{ H^1_i(\Kpinf,A)^{\Gamma_0}}\cong H^1_{/i}(\QQ_p,A),
\]
where the first isomorphism follows from \eqref{eq:coinvariant}, and the second follows from Hochschild-Serre and Remark \ref{rem:restrictedHi}.
\end{proof}

We now prove Theorem~\ref{thm:finiteindex}. We note that it is enough to show that it holds for $\Sel_i(A_f/\Qinf)^\theta\otimes\kappa^s=\Sel_i(A_f(\kappa^s\theta^{-1})/\Kinf)$ for some $s\in\ZZ$. Let $s$ be an integer satisfying the conclusion of Proposition~\ref{prop:surjectivefinite} and write $A=A_f(\kappa^s\theta^{-1})$. Recall from Corollary~\ref{cor:surjloc} that we have the short exact sequence
\[
0\rightarrow \Sel_i(A/\Kinf) \rightarrow H^1_\Sigma(\Kinf,A)\rightarrow \prod H^1_{/i}(\Kvinf,A)\rightarrow 0.
\]
This gives the exact sequence
\[
 H^1_\Sigma(\Kinf,A)^{\Gamma_0}\rightarrow \prod H^1_{/i}(\Kvinf,A)^{\Gamma_0}\rightarrow\Sel_i(A/\Kinf)_{\Gamma_0} \rightarrow H^1_\Sigma(\Kinf,A)_{\Gamma_0}.
\]
Therefore, on combining this with Propositions~\ref{prop:H1sub} and~\ref{prop:surjinv}, we deduce that $\Sel_i(A/\Kinf)_{\Gamma_0}=0$, which concludes our proof.

\section{Algebraic Iwasawa invariants}\label{section:invariants}
Given a finitely generated $\Lambda$-torsion module $M$, we have the pseudo-isomorphism
\[
M\sim \bigoplus_i \Lambda/\vpi^{n_i}\oplus\bigoplus_j\Lambda/F_j^{m_j}
\]
for some integers $n_i,m_j$ and distingushed polynomials $F_j\in\cO_E[X]$. The $\mu$-invariant of $M$ is defined to be $\sum n_i$ whereas its $\lambda$-invariant is defined to be $\sum m_j\times \deg(F_j)$. Let $\theta\in\hat{\Delta}$. Since $\Sel_i(A_f/\Qinf)^\theta$ is $\Lambda$-cotorsion by \textbf{(tor)}, we may define $\lambda$- and $\mu$-invariants for its Pontryagin dual, which we shall refer simply as the $\lambda$- and $\mu$-invariants of $\Sel_i(A_f/\Qinf)^\theta$.
The goal of this section is to generalize results of \cite{kim09}.

\subsection{Mod $\vpi$ Selmer groups}
Consider $A_f[\vpi]=\frac{1}{\vpi}T_f/T_f(1)\subset V_f/T_f(1)$. Recall from hypothesis \textbf{(inv)} that $A_f^{G_{\Qpinf}}=0$.
\begin{lemma}\label{lem:H1torsion}
Let $s$ be any integer. We have the isomorphism $H^1(K,\Afs[\vpi])\cong H^1(K,\Afs)[\vpi]$ if $K$ is a field that is contained in  $\Qpinf$.
\end{lemma}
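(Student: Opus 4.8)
The plan is to compare the two cohomology groups through the long exact sequence attached to multiplication by $\vpi$, and to control the resulting connecting terms using hypothesis \textbf{(inv)}.

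First I would observe that $\Afs = A_f\otimes\chi^s$ is $\vpi$-divisible, being (a twist of) a quotient of a $\Qp$-vector space by an $\cO_E$-lattice. Hence multiplication by $\vpi$ is surjective on $\Afs$ with kernel $\Afs[\vpi]$, giving a short exact sequence of $G_K$-modules
\[
0 \to \Afs[\vpi] \to \Afs \xrightarrow{\vpi} \Afs \to 0.
\]
Taking $G_K$-cohomology and extracting the relevant portion of the long exact sequence yields
\[
\Afs^{G_K} \xrightarrow{\vpi} \Afs^{G_K} \to H^1(K,\Afs[\vpi]) \to H^1(K,\Afs)[\vpi] \to 0,
\]
so the claimed isomorphism is equivalent to the vanishing of the cokernel of $\vpi$ acting on $\Afs^{G_K}$; in particular it suffices to prove that $\Afs^{G_K}$ itself is $0$.

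For this I would use that $K\subseteq\Qpinf$, so that $\Afs^{G_K}\hookrightarrow\Afs^{G_{\Qpinf}}$. Since the cyclotomic character is trivial on $G_{\Qpinf}$, the twist $\Afs$ is isomorphic to $A_f$ as a $G_{\Qpinf}$-module, whence $\Afs^{G_{\Qpinf}}\cong A_f^{G_{\Qpinf}}$, which is $0$ by the case $m=0$ of \textbf{(inv)}. This forces $\Afs^{G_K}=0$, the cokernel term disappears, and the exact sequence collapses to the desired isomorphism.

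There is essentially no obstacle here; the only point that needs a word is that $s$ may be negative, which is harmless because the Tate twist becomes trivial upon restriction to $G_{\Qpinf}$, so one never needs \textbf{(inv)} beyond the single instance $m=0$.
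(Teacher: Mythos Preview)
Your proposal is correct and follows exactly the paper's approach: the paper's proof consists of the single sentence ``This follows from the long exact sequence induced from $0\rightarrow \Afs[\vpi]\rightarrow \Afs\stackrel{\vpi}{\longrightarrow} \Afs\rightarrow0$,'' with the vanishing of $A_f^{G_{\Qpinf}}$ (via \textbf{(inv)}) recalled in the line immediately preceding the lemma. You have simply unpacked these details, and your remark that only the $m=0$ instance of \textbf{(inv)} is actually needed here is a valid observation.
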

\begin{proof}
This follows from the long exact sequence induced from
$$0\rightarrow \Afs[\vpi]\rightarrow \Afs\stackrel{\vpi}{\longrightarrow} \Afs\rightarrow0.$$
\end{proof}

Consequently, we may define
\[
H^1_i(\Qpinf,\Afs[\vpi])=H^1_i(\Qpinf,\Afs)[\vpi]\subset H^1(\Qpinf,\Afs[\vpi])
\]
for $i=1,2$. For $v$ a place of $\Qinf$ with $v\nmid p$, we define
\[
H^1_i(\Qvinf,\Afs[\vpi])=H^1_\ur(\Qvinf,A_f[\vpi])\otimes\kappa^s
\]
as before.
This allows us to define the mod $\vpi$ signed Selmer groups
\[
\Sel_i(\Afs[\vpi]/\Qinf)=\ker\left(H^1_\Sigma(\Qinf,\Afs[\vpi])\rightarrow\prod_{v|\ell,\ell\in\Sigma}H^1_{/i}(\Qvinf,\Afs[\vpi])\right)
\]
and
\[
\Sel_i^{\Sigma_0}(\Afs[\vpi]/\Qinf)=\ker\left(H^1_\Sigma(\Qinf,\Afs[\vpi])\rightarrow\prod_{v|\ell,\ell\in\Sigma\setminus\Sigma_0}H^1_{/i}(\Qvinf,\Afs[\vpi])\right)
\]
as before.

\begin{remark}\label{rk:dual}
Note that under the pairing \eqref{eq:pairing},  $H^1_i(\Qpinf,A_f[\vpi])$ is the Pontryagin dual of $\image\Col_{f,i}/\vpi\image\Col_{f,i}$.
\end{remark}

We now prove the following generalization of \cite[Proposition~2.8]{greenbergvatsal} (ordinary case) and \cite[Proposition~10]{kim09} (supersingular case).
\begin{proposition}\label{prop:torsionSel}
For any $\Sigma_0\subset\Sigma$ that contains all primes that divide $N$ but  not $p$ and $\infty$, we have the $\Lambda(\Gamma)$-isomorphism
\[
\Sel_i^{\Sigma_0}(\Afs/\Qinf)[\vpi]\cong\Sel_i^{\Sigma_0}(\Afs[\vpi]/\Qinf).
\]
for $i=1$ or $2$ and any $j\in\ZZ$.
\end{proposition}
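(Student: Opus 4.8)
The plan is to prove the isomorphism $\Sel_i^{\Sigma_0}(\Afs/\Qinf)[\vpi]\cong\Sel_i^{\Sigma_0}(\Afs[\vpi]/\Qinf)$ by a diagram chase comparing the defining exact sequences of the two Selmer groups, using the short exact sequence $0\to\Afs[\vpi]\to\Afs\xrightarrow{\vpi}\Afs\to0$ of $G_\QQ$-modules. First I would apply the associated long exact sequence in $\Sigma$-cohomology over $\Qinf$: since $H^0_\Sigma(\Qinf,\Afs)$ is divisible (indeed $\Afs^{G_{\Qpinf}}=0$ by \textbf{(inv)}, so in particular $H^0_\Sigma(\Qinf,\Afs)=0$), multiplication by $\vpi$ on $H^0_\Sigma(\Qinf,\Afs)$ is surjective, which yields a short exact sequence $0\to H^1_\Sigma(\Qinf,\Afs[\vpi])\to H^1_\Sigma(\Qinf,\Afs)\xrightarrow{\vpi} H^1_\Sigma(\Afs)$, i.e. $H^1_\Sigma(\Qinf,\Afs[\vpi])\cong H^1_\Sigma(\Qinf,\Afs)[\vpi]$. (This is the global analogue of Lemma~\ref{lem:H1torsion}.)

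Next I would do the same analysis locally at each place $v\mid\ell$ with $\ell\in\Sigma\setminus\Sigma_0$, to identify $H^1_{/i}(\Qvinf,\Afs[\vpi])$ with a submodule of $H^1_{/i}(\Qvinf,\Afs)[\vpi]$, or at least to produce a map $H^1_{/i}(\Qvinf,\Afs[\vpi])\to H^1_{/i}(\Qvinf,\Afs)$. For $v\mid p$ the definition $H^1_i(\Qpinf,\Afs[\vpi])=H^1_i(\Qpinf,\Afs)[\vpi]$ was made precisely so that the snake lemma applied to the three-term filtration $0\to H^1_i\to H^1\to H^1_{/i}\to0$ and multiplication by $\vpi$ gives an injection $H^1_{/i}(\Qpinf,\Afs[\vpi])\hookrightarrow H^1_{/i}(\Qpinf,\Afs)[\vpi]$; similarly for $v\nmid p$ using that $H^1_{\ur}$ is by construction the $\vpi$-torsion analogue, together with the fact that $H^1_{\ur}(\Qvinf,A_f[\vpi])\to H^1_{\ur}(\Qvinf,A_f)$ behaves well because $\ell$ is unramified in the relevant extension (this is where the hypothesis on $\Sigma_0$ containing the primes dividing $N$, and the standard facts about unramified cohomology being cotorsion with trivial $\mu$, enter — cf. Remark~\ref{rk:mu}). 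Assembling these local maps with the global isomorphism of the previous paragraph into a commutative diagram whose rows are the defining sequences of $\Sel_i^{\Sigma_0}(\Afs[\vpi]/\Qinf)$ and of $\Sel_i^{\Sigma_0}(\Afs/\Qinf)[\vpi]$, a snake-lemma chase gives the containment $\Sel_i^{\Sigma_0}(\Afs[\vpi]/\Qinf)\hookrightarrow\Sel_i^{\Sigma_0}(\Afs/\Qinf)[\vpi]$.

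For the reverse inclusion — which is really the crux — I would need surjectivity of $H^1_{/i}(\Qvinf,\Afs[\vpi])\to H^1_{/i}(\Qvinf,\Afs)[\vpi]$ at the relevant places, equivalently the vanishing of the connecting contribution coming from $H^1_i(\Qvinf,\Afs)/\vpi$. At $v\mid p$ this follows from Remark~\ref{rk:dual}: $H^1_i(\Qpinf,A_f[\vpi])$ is the Pontryagin dual of $\image\Col_{f,i}/\vpi\image\Col_{f,i}$, and a cardinality count (or exactness of the functor in question on the free-rank-one module $\image\Col_{f,i}$) shows the natural map is in fact an isomorphism onto $H^1_{/i}(\Qpinf,\Afs)[\vpi]$. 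At $v\nmid p$ the same holds because $H^1_{/i}(\Qvinf,\Afs)$ has trivial $\mu$-invariant and one checks the $\vpi$-divisibility statement directly from the structure of $H^1_{\ur}$. Thus both rows are short exact and the five lemma (or snake lemma) forces $\Sel_i^{\Sigma_0}(\Afs[\vpi]/\Qinf)\cong\Sel_i^{\Sigma_0}(\Afs/\Qinf)[\vpi]$; compatibility with the $\Lambda(\Gamma)$-action is automatic since every map in sight is $\Galois$-equivariant. I expect the main obstacle to be the careful bookkeeping at the bad places $v\mid\ell$ with $\ell\in\Sigma\setminus\Sigma_0$ — specifically verifying that passing to $\vpi$-torsion commutes with the formation of the unramified local condition, which is exactly why the hypothesis forces $\Sigma_0$ to contain all primes dividing $N$ (so that $A_f$ is unramified, hence $H^1_{\ur}$ is well-behaved, at every place outside $\Sigma_0$ other than $p$).
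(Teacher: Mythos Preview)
Your proposal is correct and follows the same overall strategy as the paper: identify $H^1_\Sigma(\Qinf,\Afs[\vpi])\cong H^1_\Sigma(\Qinf,\Afs)[\vpi]$ globally via \textbf{(inv)}, then verify that the local conditions at each $v\mid\ell$ with $\ell\in\Sigma\setminus\Sigma_0$ match. The paper's execution is considerably more direct than yours, however, because it compares the \emph{subgroups} $H^1_i$ rather than the quotients $H^1_{/i}$. At $v\mid p$ the equality $H^1_i(\Qpinf,\Afs[\vpi])=H^1_i(\Qpinf,\Afs)[\vpi]$ is literally the definition, so no snake-lemma argument or appeal to the structure of $\image\Col_{f,i}$ is needed (and your claim that this image is ``free of rank one'' is not quite right --- torsion-free is what holds and what suffices). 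At $v\nmid p$, since $\ell\nmid N$ one has $A_f^{I_v}=A_f$, and divisibility of $A_f$ immediately gives $H^1(I_v,A_f[\vpi])\cong H^1(I_v,A_f)[\vpi]$, so the unramified conditions match; your invocation of Remark~\ref{rk:mu} and $\mu$-invariants is a detour that does not actually supply the needed $\vpi$-divisibility. Once the subgroups $H^1_i$ coincide under the global identification, the Selmer groups agree automatically, with no separate ``reverse inclusion'' step required.
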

\begin{proof}
By Lemma~\ref{lem:H1torsion}, we have the isomorphism
\[
H^1_\Sigma(\Qinf,\Afs[\vpi])\cong H^1_\Sigma(\Qinf,\Afs)[\vpi].
\]
It is therefore enough to check that the local conditions which define the corresponding Selmer groups are compatible.

Let $\ell \in \Sigma\setminus\Sigma_0$ and $v$ a place of $\Qinf$ such that $v|\ell$. If $v=p$, the local conditions are compatible by definition, so suppose $l \neq p$, and let $I_v$ be the corresponding inertia group. Since $v\nmid Np$, we have $A_f^{I_v}=A_f$. Hence the short exact sequence $0\rightarrow A_f[\vpi]\rightarrow A_f\rightarrow A_f\rightarrow 0$ gives rise to
\[
A_f / \vpi A_f \rightarrow H^1(I_v,A_f[\vpi])\rightarrow H^1(I_v,A_f)[\vpi]\rightarrow0.
\]
 Since $A_f$ is divisible, the first term in this sequence is zero, hence we have an isomorphism
 \[
 H^1(I_v,A_f[\vpi])\cong H^1(I_v,A_f)[\vpi].
\]

By Lemma \ref{lem:H1torsion} and the definition of $H^1_i$,  we deduce that
\[
H^1_i(\Qvinf,\Afs)[\vpi]\cong H^1_i(\Qvinf,\Afs[\vpi])
\]
for all places $v$ that divide $\ell \neq p$ for some $\ell\in\Sigma\setminus \Sigma_0$, which concludes the proof.
\end{proof}

If $\theta\in\hat\Delta$ is any character, we may consider the $\theta$-isotypic component of $\Sel_i(\Afs[\varpi]/\Qinf)$ and $\Sel_i^{\Sigma_0}(\Afs[\varpi]/\Qinf)$ as before. These groups can be considered as Selmer groups of $\Afs(\theta^{-1})$ over $\Kinf$.
\subsection{Congruent modular forms}
 Let  $g=\sum a_n(g)q^n$ be a second modular form of weight $k$, level $N'$ with $p\nmid N'$. On enlarging $E$ if necessary, we assume that $a_n(g)\in E$ for all $n$. Similarly, on enlarging $\Sigma$ if necessary, we assume that $\Sigma$ is a set places of $\QQ$ that contains $p$, $\infty$ and the primes that divide $NN'$. Furthermore, we impose the following hypothesis that comes from \cite{bergerlizhu04}.
 \begin{itemize}
 \item[\textbf{(BLZ)}] $\ord_p(a_p(f)),\ord_p(a_p(g))>\lfloor(k-2)/(p-1)\rfloor$.
 \end{itemize}

 By \cite[Theorem~4.1.1]{bergerlizhu04}, this hypothesis implies that
$$\T_f/\vpi\T_f\cong\T_g/\vpi\T_g$$
as $G_{\Qp}$-representations. Indeed, as explained in the proof of \textit{loc. cit.}, we may choose an $\cO_E\otimes\AQp$ basis $\{n_{f,1},n_{f,2}\}$ (resp. $\{n_{g,1},n_{g,2}\}$) of the Wach module of $\NN(\T_f)$ (resp. $\NN(\T_g)$) so that the respective matrices of $\vp$ and $\gamma\in\Gamma$ are congruent modulo $\vpi$. In particular,
\begin{align}
\label{eq:isomod}\NN(\T_f)/\vpi\NN(\T_f)&\rightarrow
 \NN(\T_g)/\vpi\NN(\T_g)\\
\notag x_1 \cdot n_{f,1}+x_2\cdot n_{f,2}&\mapsto x_1\cdot n_{g,1}+x_2 \cdot n_{g,2}
\end{align}
is an isomorphism of $\cO_E\otimes\AQp$-modules. Furthermore, such an isomorphism is $\vp$- and $\Gamma$-equivariant.

As explained in \S\ref{S:signed}, these bases allow us to define Coleman maps $\Col_{f,i}$ (resp. $\Col_{g,i}$) and the signed Selmer groups $\Sel_i(\Afs/\Qinf)$, $\Sel_i(\Ags/\Qinf)$, $\Sel_i(\Afs[\vpi]/\Qinf)$, $\Sel_i(\Ags[\vpi]/\Qinf)$   for $i=1,2$.
\begin{lemma}\label{lem:congcondition}
We have the $\cO_E\lb \Gamma\rb$-isomorphism
\[
H_i^1(\Qpinf,\Afs[\vpi])\cong H_i^1(\Qpinf,\Ags[\vpi])
\]
for both $i=1,2$ and all $s\in\ZZ$.
\end{lemma}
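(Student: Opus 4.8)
The plan is to reduce the statement to a comparison of the images of the Coleman maps, then to extract that comparison from the congruence of Wach modules recorded in \eqref{eq:isomod}. First I would invoke Remark~\ref{selmer-twists} to twist away the dependence on $s$, so it suffices to treat $s=0$; the twist by $\chi^{-s}$ commutes with everything in sight. Next, by Remark~\ref{rk:dual}, $H^1_i(\Qpinf,A_f[\vpi])$ is the Pontryagin dual of $\image\Col_{f,i}/\vpi\image\Col_{f,i}$, and likewise for $g$. Thus the lemma is equivalent to an isomorphism of $\cO_E\lb\Gamma\rb$-modules
\[
\image\Col_{f,i}\big/\vpi\image\Col_{f,i}\cong \image\Col_{g,i}\big/\vpi\image\Col_{g,i}.
\]
Since Pontryagin duality is an anti-equivalence on finite modules and is $\Gamma$-equivariant up to the inversion built into the pairing \eqref{eq:pairing} (which is harmless as $\Gamma$ acts on both sides symmetrically), this reformulation is faithful.

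The key step is then to show that the two Coleman maps agree modulo $\vpi$ after identifying source and target. Recall $\Col_{f,i}$ is defined via $(1-\vp):\NN(\T_f)^{\psi=1}\to\vp^*\NN(\T_f)^{\psi=0}$, reading off coordinates against the $\Lambda(\Gamma)$-basis $(1+\pi)\vp(n_{f,1}),(1+\pi)\vp(n_{f,2})$; and similarly for $g$. By the discussion surrounding \eqref{eq:isomod}, the chosen bases $\{n_{f,1},n_{f,2}\}$ and $\{n_{g,1},n_{g,2}\}$ induce a $\vp$- and $\Gamma$-equivariant isomorphism $\NN(\T_f)/\vpi\NN(\T_f)\cong\NN(\T_g)/\vpi\NN(\T_g)$ sending $n_{f,j}\mapsto n_{g,j}$. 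This isomorphism is compatible with $\psi$ (since $\psi$ is determined by $\vp$ and the $\cO_E\otimes\AQp$-module structure), hence identifies $\left(\NN(\T_f)/\vpi\right)^{\psi=1}$ with $\left(\NN(\T_g)/\vpi\right)^{\psi=1}$, and it identifies $\vp^*\NN(\T_f)/\vpi$ with $\vp^*\NN(\T_g)/\vpi$ carrying the basis element $(1+\pi)\vp(n_{f,j})$ to $(1+\pi)\vp(n_{g,j})$. Since $1-\vp$ is equivariant for this identification, the coordinate functions match: $\Col_{f,i}\bmod\vpi$ and $\Col_{g,i}\bmod\vpi$ are intertwined by the induced map on $\left(\NN(\T)/\vpi\right)^{\psi=1}$. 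Via the Herr isomorphism \eqref{eq:Herr} this is the statement that $\Col_{f,i}$ and $\Col_{g,i}$ have the same image in $\Lambda(\Gamma)/\vpi\Lambda(\Gamma)$, so their images have isomorphic reductions mod $\vpi$.

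One technical wrinkle to address carefully is that $\HIw(\Qp,\T_f)/\vpi$ need not equal $H^1_{\Iw}$ of the reduction on the nose; but what we actually use is only the identification of the $\psi=1$ submodules of $\NN(\T_f)/\vpi$ and $\NN(\T_g)/\vpi$, together with the fact that $\image\Col_{f,i}\bmod\vpi$ is computed from that submodule via $1-\vp$ and the coordinate projection. Since all of these operations are manifestly compatible with the equivariant isomorphism \eqref{eq:isomod}, the argument goes through; this is precisely the supersingular analogue of \cite[Proposition~2.8]{greenbergvatsal} and \cite[Proposition~10]{kim09} at the local-at-$p$ level, and indeed of the corresponding step in \cite{kim13}. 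The main obstacle — really the only nontrivial point — is bookkeeping the compatibility of $\psi$ with the mod-$\vpi$ reduction and the basis identification; once that is granted, the rest is formal, and passing to $\theta$-isotypic components is immediate since the isomorphism is $\Gamma$-equivariant, hence $\Delta$-equivariant.
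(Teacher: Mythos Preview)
Your proposal is correct and follows essentially the same route as the paper: reduce via duality and Remark~\ref{rk:dual} (together with twisting) to the statement $\image\Col_{f,i}/\vpi\,\image\Col_{f,i}\cong\image\Col_{g,i}/\vpi\,\image\Col_{g,i}$, and then read this off from the $\vp$- and $\Gamma$-equivariant isomorphism \eqref{eq:isomod}. The paper dispatches the second step in a single sentence (``immediate from \eqref{eq:isomod}''), whereas you unpack the compatibility of $\psi$, $1-\vp$, and the basis vectors $(1+\pi)\vp(n_{\bullet,j})$ under that isomorphism; this is helpful elaboration rather than a different argument.
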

\begin{proof}
By duality and twisting, Remark~\ref{rk:dual} tells us that it suffices to show that
$$
\image \Col_{f,i}/\vpi\image \Col_{f,i}\cong\image \Col_{g,i}/\vpi\image \Col_{g,i}.
$$
But this is immediate from the isomorphism \eqref{eq:isomod}.
\end{proof}

From now on, we  assume the following hypothesis holds for $f$ and $g$.
\begin{itemize}
\item[\textbf{(cong)}]$T_f/\vpi T_f\cong T_g/\vpi T_g$ as $G_\QQ$-representations.
\end{itemize}

\begin{proposition}\label{prop:congSel}
Let $\Sigma_0 = \Sigma \setminus \{p,\infty \}$. The hypothesis \textbf{(Cong)} implies that
\[
\Sel_i^{\Sigma_0}(\Afs[\vpi]/\Qinf)\cong\Sel_i^{\Sigma_0}(\Ags[\vpi]/\Qinf)
\]
as $\cO_E\lb\Gamma\rb$-modules for both $i=1,2$ and any $s\in\ZZ$.
\end{proposition}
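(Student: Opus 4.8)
The plan is to reduce the statement for $\Sel_i^{\Sigma_0}$ to a comparison of global cohomology groups together with a matching of the remaining local conditions (at $p$ only, since $\Sigma_0$ absorbs all the bad primes away from $p$). First I would observe that by Proposition~\ref{prop:torsionSel} we have $\Sel_i^{\Sigma_0}(\Afs/\Qinf)[\vpi]\cong\Sel_i^{\Sigma_0}(\Afs[\vpi]/\Qinf)$, but the cleaner route is to work directly with the mod $\vpi$ Selmer groups: by definition,
\[
\Sel_i^{\Sigma_0}(\Afs[\vpi]/\Qinf)=\ker\left(H^1_\Sigma(\Qinf,\Afs[\vpi])\rightarrow H^1_{/i}(\Qpinf,\Afs[\vpi])\right),
\]
since $\Sigma\setminus\Sigma_0=\{p,\infty\}$ and the archimedean contribution is trivial as $p$ is odd. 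The hypothesis \textbf{(cong)} gives an isomorphism $\Afs[\vpi]\cong A_f[\vpi]\otimes\kappa^s\cong A_g[\vpi]\otimes\kappa^s\cong\Ags[\vpi]$ of $G_\QQ$-modules (the twist by $\kappa^s$ is harmless and $G_\QQ$-equivariant), and since $\Afs[\vpi]$ is unramified outside $\Sigma$ this induces an isomorphism $H^1_\Sigma(\Qinf,\Afs[\vpi])\cong H^1_\Sigma(\Qinf,\Ags[\vpi])$ as $\cO_E\lb\Gamma\rb$-modules.

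Next I would check that this global isomorphism carries the local condition at $p$ for $f$ to that for $g$. This is precisely the content of Lemma~\ref{lem:congcondition}: it identifies $H^1_i(\Qpinf,\Afs[\vpi])$ with $H^1_i(\Qpinf,\Ags[\vpi])$ inside $H^1(\Qpinf,\Afs[\vpi])\cong H^1(\Qpinf,\Ags[\vpi])$. The key compatibility to verify is that the restriction map $H^1_\Sigma(\Qinf,-[\vpi])\to H^1(\Qpinf,-[\vpi])$ and the identification of $H^1(\Qpinf,\Afs[\vpi])$ with $H^1(\Qpinf,\Ags[\vpi])$ are both induced by the same underlying isomorphism of $G_{\Qp}$-modules coming from \textbf{(cong)} restricted to $G_{\Qp}$; granting this, the isomorphism of global $H^1_\Sigma$'s restricts to an isomorphism of the local images, and hence induces an isomorphism on $H^1_{/i}(\Qpinf,-[\vpi])$. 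Taking kernels then yields $\Sel_i^{\Sigma_0}(\Afs[\vpi]/\Qinf)\cong\Sel_i^{\Sigma_0}(\Ags[\vpi]/\Qinf)$ as $\cO_E\lb\Gamma\rb$-modules.

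The main subtlety is the coherence of the two isomorphisms in the previous paragraph: the one coming from \textbf{(cong)} as $G_\QQ$-modules (used globally) and the one in Lemma~\ref{lem:congcondition}, which ultimately originates from the Wach module isomorphism \eqref{eq:isomod} as $G_{\Qp}$-modules. One must know that the $G_{\Qp}$-restriction of the $G_\QQ$-isomorphism $T_f/\vpi T_f\cong T_g/\vpi T_g$ is compatible with the local isomorphism of Wach modules used to build the Coleman maps --- or at least that \emph{some} choice of the $G_\QQ$-isomorphism is so compatible, which is exactly what the construction in \cite{bergerlizhu04} provides (the bases $n_{f,i}$, $n_{g,i}$ are chosen so that the residual Wach modules match, and this induces the residual $G_{\Qp}$-representation isomorphism). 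I would make this explicit: fix the $G_\QQ$-isomorphism $\phi:T_f/\vpi T_f\xrightarrow{\sim}T_g/\vpi T_g$ from \textbf{(cong)}, twist to get $\Afs[\vpi]\xrightarrow{\sim}\Ags[\vpi]$, and note that Dieudonn\'e/Wach module functoriality forces $\phi|_{G_{\Qp}}$ to match the isomorphism underlying \eqref{eq:isomod} up to an $\cO_E^\times$-scalar, which does not affect the submodule $H^1_i$. With that in hand the proof is a formal diagram chase identifying kernels of parallel localization maps.
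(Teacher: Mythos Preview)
Your approach is essentially the same as the paper's: identify the global $H^1_\Sigma$ via \textbf{(cong)}, match the local condition at $p$ via Lemma~\ref{lem:congcondition}, and conclude. You streamline the paper's argument slightly by observing that with $\Sigma_0=\Sigma\setminus\{p,\infty\}$ only the condition at $p$ survives, so there is no need to discuss $v\nmid p$ separately (the paper does so, redundantly).

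Your extended discussion of the compatibility between the $G_\QQ$-isomorphism from \textbf{(cong)} and the $G_{\Qp}$-isomorphism underlying \eqref{eq:isomod} is a point the paper passes over in silence; you are right that this needs to be said, since Lemma~\ref{lem:congcondition} produces an isomorphism of the $H^1_i$'s via the Wach-module identification, and one must know this agrees (at least up to something preserving $H^1_i$) with the restriction to $G_{\Qp}$ of the global isomorphism used on $H^1_\Sigma$. Your resolution via functoriality of Wach modules and a scalar ambiguity is reasonable; note however that the ``up to $\cO_E^\times$-scalar'' claim relies on the residual $G_{\Qp}$-representation having only scalar endomorphisms, which does not follow from \textbf{(irred)} alone. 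A cleaner way to close this is to invoke \textbf{(irred)} globally: any two $G_\QQ$-isomorphisms $T_f/\vpi\cong T_g/\vpi$ differ by a scalar in $k_E^\times$ by Schur, so one may simply \emph{choose} the global isomorphism in \textbf{(cong)} to be one whose $G_{\Qp}$-restriction is the isomorphism furnished by \cite{bergerlizhu04}, and then the diagram commutes on the nose.
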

\begin{proof}
Note that $T_f/\vpi T_f(1)\cong A_f[\vpi]$ as $G_\QQ$-modules and similarly for $g$. Hence, \textbf{(Cong)} tells us that
\begin{align*}
H^1_\Sigma(\Qinf, \Afs)&\cong H^1_\Sigma(\Qinf,\Ags);\\
H^1_i(\Qvinf, \Afs)&\cong H^1_i(\Qvinf, \Ags)
\end{align*}
for all $v\nmid p$. Hence, together with Lemma~\ref{lem:congcondition}, we deduce that
\[
H^1_i(\Qvinf, \Afs)\cong H^1_i(\Qvinf, \Ags)
\]
for all $v$ and $i=1,2$. The result then follows.
\end{proof}

\begin{theorem}\label{thm:sigma-invariants}
Let $\theta\in\hat{\Delta}$ be any character. The $\mu$-invariant of $\Sel_i(\Afs/\Qinf)^\theta$ vanishes if and only if that of $\Sel_i(\Ags/\Qinf)^\theta$ vanishes. In this case, the $\lambda$-invariants of $\Sel_i^{\Sigma_0}(\Afs/\Qinf)^\theta$ and $\Sel_i^{\Sigma_0}(\Ags/\Qinf)^\theta$ coincide.
\end{theorem}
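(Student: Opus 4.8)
The plan is to deduce the theorem from the mod-$\vpi$ comparison results together with the structure theory of $\Lambda$-modules, closely following Greenberg--Vatsal. First I would reduce to the non-primitive Selmer groups: by Remark~\ref{rk:mu}, the $\mu$-invariant of $\Sel_i(\Afs/\Qinf)^\theta$ agrees with that of $\Sel_i^{\Sigma_0}(\Afs/\Qinf)^\theta$ (and likewise for $g$), where $\Sigma_0 = \Sigma\setminus\{p,\infty\}$, so it suffices to prove the two assertions for the non-primitive groups. Next I would observe the standard fact that, for a cofinitely generated cotorsion $\Lambda$-module $\cX$, the $\mu$-invariant of $\cX$ vanishes if and only if $\cX[\vpi]$ is finite (equivalently cofinitely generated over $\cO_E$), and in that case $\cX[\vpi]$ has $\cO_E$-corank equal to the $\lambda$-invariant of $\cX$ --- provided $\cX$ has no proper submodule of finite index, which is exactly Theorem~\ref{thm:finiteindex} applied to $\Sel_i(\Afs/\Qinf)^\theta$ (and this property passes to the non-primitive group since it differs from the primitive one by the $\Lambda$-cotorsion, $\mu=0$ local factors of Corollary~\ref{cor:selmerquotient}, or one argues directly).

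The heart of the argument is then the chain of isomorphisms
\[
\Sel_i^{\Sigma_0}(\Afs/\Qinf)^\theta[\vpi]\cong \Sel_i^{\Sigma_0}(\Afs[\vpi]/\Qinf)^\theta\cong \Sel_i^{\Sigma_0}(\Ags[\vpi]/\Qinf)^\theta\cong \Sel_i^{\Sigma_0}(\Ags/\Qinf)^\theta[\vpi],
\]
where the outer isomorphisms are Proposition~\ref{prop:torsionSel} (restricted to the $\theta$-component, which is harmless since everything is $\cO_E[\Delta]$-linear) and the middle one is Proposition~\ref{prop:congSel}, which uses \textbf{(cong)}. From this and the dictionary above, finiteness of $\Sel_i^{\Sigma_0}(\Afs/\Qinf)^\theta[\vpi]$ is equivalent to finiteness of $\Sel_i^{\Sigma_0}(\Ags/\Qinf)^\theta[\vpi]$, giving the $\mu=0$ equivalence; and when both vanish, the common $\cO_E$-corank of the (now equal) $\vpi$-torsion groups computes both $\lambda$-invariants, so $\lambda\big(\Sel_i^{\Sigma_0}(\Afs/\Qinf)^\theta\big)=\lambda\big(\Sel_i^{\Sigma_0}(\Ags/\Qinf)^\theta\big)$.

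I expect the main obstacle to be the bookkeeping in the module-theoretic lemma: to go from $\mu=0$ to ``$\cX[\vpi]$ finite with corank $=\lambda$'' one needs that the Pontryagin dual $\cX^\vee$ has no nonzero finite $\Lambda$-submodule, since otherwise a copy of $\cO_E/\vpi^n$ could hide extra $\vpi$-torsion without contributing to $\lambda$; this is precisely where Theorem~\ref{thm:finiteindex} is indispensable, and one must check it transfers to the non-primitive Selmer group (either via Corollary~\ref{cor:selmerquotient}, noting the quotient's dual is torsion-free-ish after the $\mu=0$ reduction, or by re-running the finite-index argument of Section~\ref{section:submodulesfinite} with $\Sigma_0$ in place of $\Sigma$). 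A secondary point requiring care is the passage to $\theta$-isotypic components: one should note that taking $e_\theta$ is exact on $\cO_E[\Delta]$-modules (as $p\nmid |\Delta|$), so it commutes with forming $\vpi$-torsion and with all the Selmer-group exact sequences, making the isotypic refinement of Propositions~\ref{prop:torsionSel} and~\ref{prop:congSel} automatic. Once these two wrinkles are dispatched, the remainder is the formal $\Lambda$-module argument of Greenberg--Vatsal verbatim.
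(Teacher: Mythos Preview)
Your proposal is correct and matches the paper's proof essentially verbatim: the paper also reduces to the non-primitive groups via Remark~\ref{rk:mu}, invokes Propositions~\ref{prop:torsionSel} and~\ref{prop:congSel} for the chain of $\vpi$-torsion isomorphisms, and then appeals to Theorem~\ref{thm:finiteindex} together with the standard $\Lambda$-module lemma (cited there as \cite[Lemma~12]{kim09}) to read off the $\lambda$-invariant from $\dim\Sel_i^{\Sigma_0}[\vpi]$. Your flagged wrinkle---that Theorem~\ref{thm:finiteindex} is stated for the primitive group but applied to the non-primitive one---is a genuine bookkeeping point that the paper passes over silently; your suggested fix via Corollary~\ref{cor:selmerquotient} (or re-running the argument) is exactly what is needed.
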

\begin{proof}
By Remark~\ref{rk:mu}, we may replace the Selmer groups $\Sel_i$ by $\Sel_i^{\Sigma_0}$ in the first statement of the theorem. Now, the $\mu$-invariant of $\Sel_i^{\Sigma_0}(\Afs/\Qinf)^\theta$ vanishes if and only if $\Sel_i^{\Sigma_0}(\Afs/\Qinf)^\theta[\vpi]$  is cofinitely generated over $E$. Therefore, the first statement is a consequence of Propositions~\ref{prop:torsionSel} and~\ref{prop:congSel}.

We now prove the second statement. We assume that the $\mu$-invariant of $\Sel_i(\Afs/\Qinf)^\theta$ vanishes (hence so do those of $\Sel_i^{\Sigma_0}(\Afs/\Qinf)^\theta$ and $\Sel_i^{\Sigma_0}(\Ags/\Qinf)^\theta$). By Theorem~\ref{thm:finiteindex} and \cite[Lemma~12]{kim09}, the $\lambda$-invariant of $\Sel_i^{\Sigma_0}(A_f/\Qinf)^\theta$ (resp. $\Sel_i^{\Sigma_0}(\Ags/\Qinf)^\theta$) are given by the length of the $\cO_E$-modules $\Sel_i^{\Sigma_0}(\Afs/\Qinf)^\theta[\vpi]$ (resp. $\Sel_i^{\Sigma_0}(\Ags/\Kinf)^\theta[\vpi]$). Hence, we are done by Propositions~\ref{prop:torsionSel} and~\ref{prop:congSel}.
\end{proof}

In light of the previous theorem, we now make the following assumption for the rest of the paper.

\begin{itemize}
\item[\textbf{($\boldsymbol{\mu=0}$)}] The $\mu$-invariants of $\Sel_i(A_f/\Kinf)$ and $\Sel_i(A_g/\Kinf)$ vanish.
\end{itemize}

\begin{remark}We note that our earlier hypothesis \textbf{(irred)} is conjectured to imply \textbf{($\boldsymbol{\mu=0}$)} when $k=2$. (See e.g. \cite[Conjecture 7.1]{perrinriou03} in the elliptic curve case.) The situation is less clear for higher weights; see Section \ref{section:example}.\end{remark}

\section{Parity of ranks}\label{section:parity} In this section we compute the $\cO_E$-corank of some cohomology groups before using the results of \S \ref{section:invariants} to obtain our main result.

Throughout this section, we fix  $\Sigma_0 = \Sigma \setminus \{ p, \infty \}$. Recall that by Corollary \ref{cor:selmerquotient} we have an isomorphism
\[
\Sel^{\Sigma_0}_i(\Afs/\Qinf)/\Sel_i(\Afs/\Qinf)\cong
\prod_{v|l,l\in\Sigma_0}H^1_{/i}(\Qvinf,\Afs)
\]
for any $s\in\ZZ$.
Let $\theta\in\hat\Delta$. On taking $\theta$-isotypic components, this becomes
\[
\Sel^{\Sigma_0}_i(\Afs/\Qinf)^\theta/\Sel_i(\Afs/\Qinf)^\theta\cong
\prod_{v|l,l\in\Sigma_0}H^1_{/i}(\Kvinf,\Afs(\theta^{-1})).
\]
By Remark \ref{rk:mu}, $H^1_{/i}(\Kvinf,\Afs(\theta^{-1}))$ is $\Lambda$-cotorsion with zero $\mu$-invariant for all $v$. We write $\tau_{v,f}(s,\theta)$ for its $\lambda$-invariant.

  Let us write
\[
\cH_\ell(\Kinf,\Afs(\theta^{-1})) = \prod_{v\mid \ell} H^1_{/i}(\Kvinf,\Afs(\theta^{-1})).
\]
Then $\cH_\ell(\Kinf,\Afs(\theta^{-1}))$ is again $\Lambda$-cotorsion with zero $\mu$-invariant. Its $\lambda$-invariant is given by $ \delta_{\ell,f}(s,\theta) := \displaystyle\sum_{v \mid \ell} \tau_{v,f}(s,\theta)$.

For a prime $\ell \in \Sigma_0$, let $I_\ell \subset G_{\QQ_\ell}$ denote the inertia subgroup, and let $\mathrm{Frob}_\ell$ denote the corresponding arithmetic Frobenius element in $\mathrm{Gal}(\QQ_\ell^{\mathrm{unr}} / \QQ_\ell)$. Let $k_E$ be the residue field of $E$ and $x\mapsto\tilde{x}$ denote the reduction map modulo $(\varpi)$.
For $s\in\ZZ$, we write $V_{f,s}=V_f(s)$. The following proposition explains how to compute the values $\tau_{v,f}(s,\theta)$.

\begin{proposition}\label{prop:compute-dl}
Let $\ell\in\Sigma_0$ and write $P_{\ell,s,\theta}(X) = \mathrm{det}((1-\mathrm{Frob}_\ell X \vert_{(V_{f,s}(\theta^{-1}))_{I_\ell}})) \in \cO_E[X]$. Let $d_{\ell,f}(s,\theta)$ denote the multiplicity of $X=\tilde{\ell}^{-1}$ as a root of $\tilde{P}_{\ell,s,\theta} \in k_E[X]$. Then for each $v \mid \ell$, we have $\tau_{v,f}(s,\theta) = d_{\ell,f}(s,\theta)$.
\end{proposition}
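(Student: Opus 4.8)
The plan is to compute the $\lambda$-invariant of the $\Lambda$-cotorsion module $H^1_{/i}(\Kvinf, \Afs(\theta^{-1}))$ for $v \mid \ell$ with $\ell \in \Sigma_0$, and to identify it with the multiplicity $d_{\ell,f}(s,\theta)$. Since $\ell \neq p$, the local condition $H^1_i$ at $v$ is by definition the unramified subgroup $H^1_\ur(\Kvinf, \Afs(\theta^{-1}))$, so $H^1_{/i}(\Kvinf, \Afs(\theta^{-1}))$ is precisely the \emph{singular quotient} $H^1(\Kvinf, \Afs(\theta^{-1}))/H^1_\ur$. First I would recall that since $\ell \nmid p$, there is the usual identification of the singular quotient with $H^1(I_v, \Afs(\theta^{-1}))^{\mathrm{Frob}_v}$ (where $\mathrm{Frob}_v$ is the Frobenius of $\Kvinf$), and that $H^1(I_v, M) \cong M(-1)_{I_\ell}$ for a finite or cofinitely generated module $M$ with $p$-power torsion, via the tame quotient of inertia. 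This reduces the computation to understanding $\bigl(\Afs(\theta^{-1})(-1)\bigr)_{I_\ell}$ as a module over $\mathrm{Gal}(\Kvinf/\QQ_\ell)$.

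The key structural point is that $\ell$ is finitely decomposed in $\Kinf/\QQ$, and $\mathrm{Gal}(\Kvinf/\QQ_{\ell,n_0})$ (for the layer $n_0$ where the decomposition stabilizes) is procyclic, topologically generated by a Frobenius lift; the $\lambda$-invariant of a cofinitely generated cotorsion $\Lambda$-module that is a successive extension of copies of $\QQ_p/\ZZ_p$ with a Frobenius-type action is computed by counting how often the ``eigenvalue'' of that generator is congruent to $1$ modulo $\varpi$. Concretely, I would pass to characteristic zero: $(V_{f,s}(\theta^{-1}))_{I_\ell}$ is an unramified $G_{\QQ_\ell}$-representation on which $\mathrm{Frob}_\ell$ acts with characteristic polynomial $P_{\ell,s,\theta}(X)$ (up to the standard normalization convention built into the definition of $P_{\ell,s,\theta}$), and the coinvariants $(\Afs(\theta^{-1})(-1))_{I_\ell}$ form, up to bounded error, the reduction of a lattice in this representation twisted by $\chi^{-1}$, i.e. with $\mathrm{Frob}_\ell$ scaled by $\ell^{-1}$. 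The corank over $\Lambda$ of $H^1_{/i}(\Kvinf, -)$ being zero (by Remark~\ref{rk:mu}, which cites \cite[Proposition~2.4]{greenbergvatsal}) already tells us $X = \tilde\ell^{-1}$ is not a root of the analogous polynomial over $\QQ_p$; the $\lambda$-invariant then counts, with multiplicity, the eigenvalues of $\mathrm{Frob}_\ell$ on $(V_{f,s}(\theta^{-1}))_{I_\ell}$ that reduce to $\tilde\ell$ modulo $\varpi$ — equivalently the multiplicity of $X = \tilde\ell^{-1}$ as a root of $\tilde P_{\ell,s,\theta}$. Since this count does not depend on the choice of $v \mid \ell$ (all the $\mathrm{Frob}_v$ on the relevant module are a common power of $\mathrm{Frob}_\ell$, and the resulting multiplicities agree), we get $\tau_{v,f}(s,\theta) = d_{\ell,f}(s,\theta)$ for every $v \mid \ell$.

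I expect the main obstacle to be the bookkeeping in the last step: carefully matching the $\Lambda$-module $\lambda$-invariant of $H^1_{/i}(\Kvinf,-)$ — a direct-limit object with a slightly subtle relationship to the characteristic ideal — to the ``number of Frobenius eigenvalues congruent to $\tilde\ell$'' count, while correctly tracking all the twists (the Tate twist $(-1)$ from $H^1(I_v,-)$, the twist by $\chi^s$ from $\Afs$, and the finite-order twist by $\theta^{-1}$) and the normalization in the definition of $P_{\ell,s,\theta}$. The cleanest route is probably to invoke \cite[Proposition~2.4]{greenbergvatsal} (or the computation in its proof) directly: that result already computes this singular-quotient $\lambda$-invariant in terms of the action of Frobenius on the inertia-coinvariants of the residual representation, so the real work is to translate its output into the polynomial-root formulation of the statement, i.e. to check that the multiplicity appearing there is exactly the multiplicity of $X = \tilde\ell^{-1}$ in $\tilde P_{\ell,s,\theta}$. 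The subtlety that $\Kinf/\QQ$ may not be totally split at $\ell$ is handled by the standard observation that replacing $\QQ_\ell$ by the appropriate finite layer only replaces $\mathrm{Frob}_\ell$ by a fixed power, which changes neither the ``is an eigenvalue $\equiv \tilde\ell$'' condition nor the total multiplicity summed over $v \mid \ell$.
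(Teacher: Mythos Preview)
Your proposal is correct and lands on essentially the same approach as the paper: the paper's entire proof is the single sentence ``This follows from the same proof as \cite[Proposition~2.4]{greenbergvatsal},'' and you explicitly identify that citation as the cleanest route after sketching what that argument contains. One small simplification you could make: over $\Kvinf$ for $v\nmid p$ the unramified subgroup $H^1_\ur(\Kvinf,\Afs(\theta^{-1}))$ is already zero (the residue field of $\Kvinf$ has no nontrivial pro-$p$ extension), so $H^1_{/i}=H^1$ and the ``singular quotient'' step is vacuous.
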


\begin{proof}
This follows from the same proof as \cite[Proposition 2.4]{greenbergvatsal}.
\end{proof}

\begin{corollary}\label{cor:slodd}
For each $\ell \in \Sigma_0$, we have $\delta_{\ell,f}(s,\theta) \equiv d_{\ell,f}(s,\theta) \mod 2$.
\end{corollary}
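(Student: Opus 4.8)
The plan is to relate the $\lambda$-invariant $\delta_{\ell,f}(s,\theta) = \sum_{v\mid\ell}\tau_{v,f}(s,\theta)$ to the quantity $d_{\ell,f}(s,\theta)$ modulo $2$, using Proposition~\ref{prop:compute-dl}, which already tells us that each individual $\tau_{v,f}(s,\theta)$ equals $d_{\ell,f}(s,\theta)$. Thus $\delta_{\ell,f}(s,\theta) = g_\ell \cdot d_{\ell,f}(s,\theta)$, where $g_\ell$ is the number of places $v$ of $\Kinf$ lying above $\ell$. So the entire content of the corollary reduces to the claim that $g_\ell \cdot d_{\ell,f}(s,\theta) \equiv d_{\ell,f}(s,\theta) \pmod 2$, i.e. that $(g_\ell - 1)\, d_{\ell,f}(s,\theta)$ is even.

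First I would recall the structure of how $\ell \ne p$ splits in the cyclotomic $\Zp$-extension $\Kinf/\QQ$. Since $\Gal(\Kinf/\QQ) \cong \Gamma_0 \cong \Zp$ and $\ell$ is unramified, the decomposition group at $\ell$ is the closed subgroup generated by $\mathrm{Frob}_\ell$; it is either all of $\Gamma_0$ (in which case $\ell$ is inert, $g_\ell = 1$) or an open subgroup of index $p^m$ for some $m \ge 1$ (in which case $g_\ell = p^m$). In the first case $g_\ell - 1 = 0$ and there is nothing to prove. In the second case $g_\ell = p^m$ is odd since $p$ is an odd prime (as fixed at the start of the notation section), so $g_\ell - 1 = p^m - 1$ is even, and hence $(g_\ell-1)\, d_{\ell,f}(s,\theta)$ is even regardless of the value of $d_{\ell,f}(s,\theta)$.

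Putting these together: in every case $g_\ell$ is odd (it is either $1$ or a positive power of the odd prime $p$), so $\delta_{\ell,f}(s,\theta) = g_\ell \cdot d_{\ell,f}(s,\theta) \equiv d_{\ell,f}(s,\theta) \pmod 2$, as desired. The only real step is the observation that the number of primes above $\ell$ in a $\Zp$-extension is always a power of $p$, combined with the standing assumption that $p$ is odd; this is elementary, so I do not anticipate any genuine obstacle — the "hard part," such as it is, is simply remembering to invoke Proposition~\ref{prop:compute-dl} to collapse the sum $\sum_{v\mid\ell}\tau_{v,f}$ into $g_\ell \cdot d_{\ell,f}$ before doing the parity count.
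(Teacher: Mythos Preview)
Your proposal is correct and follows essentially the same argument as the paper: both observe that by Proposition~\ref{prop:compute-dl} one has $\delta_{\ell,f}(s,\theta) = r_\ell \cdot d_{\ell,f}(s,\theta)$ where $r_\ell$ is the number of places of $\Kinf$ above $\ell$, and then note that $r_\ell$ is a power of the odd prime $p$ and hence odd. Your write-up is slightly more detailed in spelling out why the number of primes above $\ell$ in a $\Zp$-extension is a $p$-power, but the substance is identical.
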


\begin{proof}
For any $v \mid \ell$, let $\Gamma_v$ denote the corresponding decomposition subgroup of $\Gamma$; then the number of $v$ above $\ell$ is exactly $r_\ell = [\Gamma_0 : \Gamma_v]$. It follows that
$$\delta_{\ell,f}(s,\theta) = r_\ell\times d_{\ell,f}(s,\theta).$$
 Since $r_\ell$ is a power of $p$, it is necessarily odd, so the result follows.
\end{proof}

We now compute the parity of $d_{\ell,f}(s,\theta)$ for each $\ell \in \Sigma_0$. First we deal with primes which do not divide the level of $f$.

\begin{lemma}\label{lem:lnotdivideN}
If $\ell \nmid N $, then $d_{\ell,f}(s,\theta) =1$ if and only if $a_\ell(f)\equiv  \theta(\ell)\ell^{s+1} + \epsilon(\ell)\theta^{-1}(\ell)\ell^{k-s-2} \mod \vpi$ and $ \epsilon(\ell)^{-1}\theta^2(\ell) \ell^{2s-k+3} \not\equiv 1 \mod \vpi$.
\end{lemma}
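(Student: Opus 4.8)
The plan is to unwind the definitions: $d_{\ell,f}(s,\theta)$ is the multiplicity of $X = \tilde\ell^{-1}$ as a root of $\tilde P_{\ell,s,\theta}(X)$, where $P_{\ell,s,\theta}(X) = \det(1 - \mathrm{Frob}_\ell X \mid (V_{f,s}(\theta^{-1}))_{I_\ell})$. Since $\ell \nmid N$, the representation $V_f$ is unramified at $\ell$, so the twist $V_{f,s}(\theta^{-1}) = V_f(s)(\theta^{-1})$ is also unramified at $\ell$ (the character $\theta$ factors through $\Delta \subset \Gamma$, which is unramified at $\ell \neq p$, and the cyclotomic twist is unramified at $\ell$ as well). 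Hence $(V_{f,s}(\theta^{-1}))_{I_\ell} = V_{f,s}(\theta^{-1})$ is two-dimensional, and $P_{\ell,s,\theta}(X)$ is the full characteristic polynomial $\det(1 - \mathrm{Frob}_\ell X)$ on this space.

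First I would record the eigenvalues of $\mathrm{Frob}_\ell$ on $V_f$: by the Eichler--Shimura relation the characteristic polynomial of arithmetic Frobenius at $\ell$ on $V_f$ is $X^2 - a_\ell(f) X + \epsilon(\ell)\ell^{k-1}$ (with the appropriate normalization so that $V_f$ has Hodge--Tate weights $\{0, 1-k\}$; one must be slightly careful whether the relevant Frobenius is arithmetic or geometric, but this only permutes the roles of the two roots). Call the roots $\alpha_\ell, \beta_\ell$, so $\alpha_\ell\beta_\ell = \epsilon(\ell)\ell^{k-1}$ and $\alpha_\ell + \beta_\ell = a_\ell(f)$. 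Twisting by $\chi^s$ multiplies eigenvalues by $\ell^s$ (since $\mathrm{Frob}_\ell$ acts on $\ZZ_p(1)$ by $\ell$), and twisting by $\theta^{-1}$ multiplies them by $\theta^{-1}(\ell)$ — here I'd note that $\theta(\mathrm{Frob}_\ell)$ makes sense because $\theta$ is a Dirichlet character of $p$-power conductor dividing the conductor of $\Qinf/\QQ$, evaluated via the cyclotomic identification. So on $V_{f,s}(\theta^{-1})$ the Frobenius eigenvalues are $\ell^s\theta^{-1}(\ell)\alpha_\ell$ and $\ell^s\theta^{-1}(\ell)\beta_\ell$, and thus
\[
P_{\ell,s,\theta}(X) = (1 - \ell^s\theta^{-1}(\ell)\alpha_\ell X)(1 - \ell^s\theta^{-1}(\ell)\beta_\ell X).
\]

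Next I would reduce modulo $\vpi$ and ask when $X = \tilde\ell^{-1}$ is a root, i.e. when $\ell^s\theta^{-1}(\ell)\alpha_\ell \equiv \ell$ or $\ell^s\theta^{-1}(\ell)\beta_\ell \equiv \ell \pmod{\vpi}$, equivalently $\alpha_\ell \equiv \theta(\ell)\ell^{1-s}$ or $\beta_\ell \equiv \theta(\ell)\ell^{1-s} \pmod \vpi$. Hmm — I should reconcile this with the claimed congruence $a_\ell(f) \equiv \theta(\ell)\ell^{s+1} + \epsilon(\ell)\theta^{-1}(\ell)\ell^{k-s-2}$; the discrepancy in exponents is exactly the normalization/geometric-vs-arithmetic Frobenius issue, and I would fix conventions so that the root condition becomes $\alpha_\ell\theta^{-1}(\ell)\ell^{-s-1} \equiv 1$ or $\beta_\ell\theta^{-1}(\ell)\ell^{-s-1}\equiv 1$. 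Using $\alpha_\ell\beta_\ell = \epsilon(\ell)\ell^{k-1}$, if one root reduces to $\theta(\ell)\ell^{s+1}$ then the other reduces to $\epsilon(\ell)\ell^{k-1}/(\theta(\ell)\ell^{s+1}) = \epsilon(\ell)\theta^{-1}(\ell)\ell^{k-s-2}$; so "$X=\tilde\ell^{-1}$ is a root" is equivalent to $a_\ell(f) = \alpha_\ell+\beta_\ell \equiv \theta(\ell)\ell^{s+1} + \epsilon(\ell)\theta^{-1}(\ell)\ell^{k-s-2} \pmod\vpi$, giving $d_{\ell,f}(s,\theta)\geq 1$. The remaining point is to decide when the multiplicity is exactly $1$ rather than $2$: the multiplicity is $2$ precisely when \emph{both} reduced eigenvalues equal $\tilde\ell \cdot \widetilde{(\theta^{-1}(\ell)\ell^{-s})}^{-1}$... more cleanly, when $\tilde P_{\ell,s,\theta}(X)$ is a perfect square $(1 - \tilde\ell X)^2$, which forces the two reduced eigenvalues to coincide; computing their ratio $\alpha_\ell/\beta_\ell$, coincidence mod $\vpi$ together with the root condition translates (using $\alpha_\ell\beta_\ell$ and $\alpha_\ell+\beta_\ell$) exactly into $\epsilon(\ell)^{-1}\theta^2(\ell)\ell^{2s-k+3} \equiv 1 \pmod\vpi$. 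Hence $d_{\ell,f}(s,\theta) = 1$ iff the congruence on $a_\ell(f)$ holds \emph{and} $\epsilon(\ell)^{-1}\theta^2(\ell)\ell^{2s-k+3}\not\equiv 1$.

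The main obstacle I anticipate is not conceptual but bookkeeping: getting every normalization right — the precise Hodge--Tate/Frobenius convention for $V_f$ versus $\T_f = T_f(k-1)$, whether $\mathrm{Frob}_\ell$ in the statement is arithmetic or geometric, and how $\theta$ (a character of $\Delta$) is evaluated at $\ell$ via the cyclotomic character — so that the two displayed exponents $\ell^{s+1}$ and $\ell^{k-s-2}$ (rather than, say, $\ell^{s}$ and $\ell^{k-s-1}$) come out correctly. I would pin these down once at the start using the fact, stated in the excerpt, that $V_f$ has Hodge--Tate weights $\{0,1-k\}$ and that $V_{f,s} = V_f(s)$, and then the rest is the short eigenvalue computation sketched above. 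A secondary subtlety is confirming that $(V_{f,s}(\theta^{-1}))_{I_\ell} = V_{f,s}(\theta^{-1})$ — i.e. that the full two-dimensional space survives — which is immediate from $\ell\nmid N$ and $\ell\neq p$ since then nothing in sight is ramified at $\ell$; this is what makes the $\ell\nmid N$ case cleaner than the general $\ell\in\Sigma_0$ case treated afterward.
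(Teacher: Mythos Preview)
Your proposal is correct and follows essentially the same approach as the paper: identify $(V_{f,s}(\theta^{-1}))_{I_\ell}$ with the full two-dimensional space since $\ell\nmid N$, write down the Frobenius polynomial, and read off the conditions for $\tilde\ell^{-1}$ to be a simple root. The paper works directly with the coefficients of $P_{\ell,s,\theta}(X)=1-a_\ell(f)\theta^{-1}(\ell)\ell^{-s}X+\epsilon(\ell)\theta^{-2}(\ell)\ell^{-2s+k-1}X^2$ (using the product of roots to locate the second root) rather than naming the eigenvalues $\alpha_\ell,\beta_\ell$, but this is cosmetic; your flagged normalization issue is exactly the point where care is needed, and once pinned down the computation is the same.
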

\begin{proof}
If $\ell \nmid N$, then $V_f$ is unramified at $\ell$, hence $(V_{f,s}(\theta^{-1}))_{I_\ell}=V_{f,s}(\theta^{-1})$ is two-dimensional, and it is a standard result that $P_{\ell,s,\theta}$ is given by
\begin{equation*}\label{frobcharbad}
P_{\ell,s,\theta}(X) = 1 - a_\ell(f)\theta^{-1}(\ell) \ell^{-s} X + \epsilon(\ell)\theta^{-2}(\ell)\ell^{-2s+k-1} X^2.
\end{equation*}
By Proposition \ref{prop:compute-dl}, $d_{\ell,f}(s,\theta)$ is odd if and only if $\tilde{\ell}^{-1}$ is a simple root of $\tilde{P}_{\ell,s,\theta}(X)$.

Since the product of the roots of $ P_{\ell,s,\theta}(X)$ is $\epsilon(\ell)^{-1}\theta^2(\ell)\ell^{2s-k+1}$, we have
\[
\tilde{\ell}^{-1} \ \text{is a root} \Longleftrightarrow \left({\epsilon(\ell)}^{-1}\theta^2(\ell)\ell\ ^{2s-k+2} \right)\widetilde{ }\ \  \text{is a root}.
\]

Therefore, $\tilde{\ell}^{-1}$ is a root if and only if
\[
a_\ell(f)\equiv  \theta(\ell)\ell^{s+1} + \epsilon(\ell)\theta^{-1}(\ell)\ell^{k-s-2} \mod \vpi.
\]

%Similarly, as the sum of the roots is $\widetilde{ a_\ell(f)\epsilon(\ell)^{-1}\theta(\ell)\ell^{s-k+2}}$, this implies
Hence, if $\tilde{\ell}^{-1}$ is a root, then it is simple if and only if $\ell^{-1} \not\equiv \epsilon(\ell)^{-1}\theta^2(\ell) \ell^{2s-k+2} \mod \vpi$, which proves the lemma.
\end{proof}

Now we consider primes which divide the level of $f$. Let $M$ denote the conductor of the nebentypus $\epsilon$.

\begin{lemma}\label{lem:ldivideN}
Suppose $\ell \mid N$. Then $d_\ell(f,s)=1$ if and only if $a_\ell(f) \equiv \theta(\ell)\ell^{s} \mod \vpi$.

\end{lemma}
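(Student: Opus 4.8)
The plan is to repeat the strategy behind the proof of Lemma~\ref{lem:lnotdivideN}: by Proposition~\ref{prop:compute-dl} we must evaluate $\tilde{P}_{\ell,s,\theta}$ at $X=\tilde\ell^{-1}$, where $P_{\ell,s,\theta}(X)=\det\bigl(1-\mathrm{Frob}_\ell X\mid (V_{f,s}(\theta^{-1}))_{I_\ell}\bigr)$. Since $\ell\ne p$, both $\chi^s$ and $\theta$ are unramified at $\ell$, so $(V_{f,s}(\theta^{-1}))_{I_\ell}=(V_f)_{I_\ell}\otimes\chi^s\theta^{-1}$ and $\mathrm{Frob}_\ell$ acts on it by an unramified scalar of the shape $\ell^{\mp s}\theta^{-1}(\ell)$ times its action on $(V_f)_{I_\ell}$. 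Everything therefore reduces to understanding the inertia coinvariants of the local representation $\rho_f|_{G_{\QQ_\ell}}$ together with the Frobenius action on them, for $\ell\mid N$; by local--global compatibility this is governed by the local component $\pi_\ell$ of $f$, and I would organise the argument by the standard classification of $\pi_\ell$, distinguishing where convenient the primes $\ell\mid M$ (at which $\epsilon$, hence $\det\rho_f$, ramifies) from the primes $\ell\nmid M$.

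First suppose $\pi_\ell$ is supercuspidal, a principal series $\pi(\mu_1,\mu_2)$ with $\mu_1$ and $\mu_2$ both ramified, or a ramified twist of the Steinberg representation $\mathrm{St}$. In each case inertia acts with no non-zero coinvariants, so $(V_f)_{I_\ell}=0$, $P_{\ell,s,\theta}(X)=1$ and $d_{\ell,f}(s,\theta)=0$; moreover the $U_\ell$-eigenvalue of the newform vanishes, i.e.\ $a_\ell(f)=0$, while $\theta(\ell)\ell^s\in\cO_E^\times$, so the congruence $a_\ell(f)\equiv\theta(\ell)\ell^s\pmod{\vpi}$ fails and both sides of the asserted equivalence are false. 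In the remaining cases — $\pi_\ell$ an unramified twist of $\mathrm{St}$ (forcing $\ell\|N$), or a principal series with exactly one of $\mu_1,\mu_2$ unramified (which forces $\ell\mid M$) — the space $(V_f)_{I_\ell}$ is one-dimensional, so $\tilde{P}_{\ell,s,\theta}$ is linear and $d_{\ell,f}(s,\theta)\in\{0,1\}$, equal to $1$ precisely when $X=\tilde\ell^{-1}$ is its (necessarily simple) root. It remains to express the Frobenius scalar on $(V_f)_{I_\ell}$ in terms of $a_\ell(f)$: local--global compatibility gives the Frobenius eigenvalue on the unramified line of $\rho_f|_{G_{\QQ_\ell}}$ in terms of $a_\ell(f)$ and $\det\rho_f$, and passing to the inertia coinvariants (using the monodromy relation in the Steinberg case) and folding in the twist by $\chi^s\theta^{-1}$ makes $\tilde{P}_{\ell,s,\theta}(\tilde\ell^{-1})=0$ equivalent to $a_\ell(f)\equiv\theta(\ell)\ell^s\pmod{\vpi}$.

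The main obstacle is not conceptual but a matter of careful bookkeeping: carrying out the local representation-theoretic trichotomy at $\ell\mid N$ and, above all, tracking the several normalisations consistently — the ``motivic'' twist versus the one implicit in $V_{f,s}$, the convention for $\mathrm{Frob}_\ell$, and the identification of which Frobenius eigenvalue survives on the inertia coinvariants (the sub in the Steinberg case, where one picks up an extra factor of $\ell$ via monodromy, versus the unramified summand in the principal-series case) — so that the final power of $\ell$ comes out uniformly as $\ell^s$. As a sanity check one can specialise to $k=2$, $s=0$ and $\theta$ trivial: the congruence becomes $a_\ell(f)\equiv 1\pmod{\vpi}$, recovering the familiar statement that a prime of split multiplicative reduction contributes ($a_\ell=1$), one of non-split multiplicative reduction does not ($a_\ell=-1$, and $p$ is odd), and one of additive reduction does not ($a_\ell=0$), which is exactly what \cite{greenbergvatsal} gives; the general case is no more than that computation together with the explicit description of $\rho_f|_{G_{\QQ_\ell}}$ at the bad primes.
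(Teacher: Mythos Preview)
Your proposal is correct and follows essentially the same approach as the paper: split according to whether $(V_f)_{I_\ell}$ vanishes (equivalently $a_\ell(f)=0$), and in the one-dimensional case read off the Frobenius polynomial $1-a_\ell(f)\theta^{-1}(\ell)\ell^{-s}X$ to obtain the congruence. The only difference is packaging --- the paper cites \cite{Li1975} and \cite{Vigneras1989} directly for the dichotomy and the Frobenius eigenvalue, whereas you unwind the same information through the local Langlands classification of $\pi_\ell$; this makes your version more self-contained but is mathematically the same argument.
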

\begin{proof}
It is known (see \cite{Li1975}) that when $\ell \mid N$ we have $a_\ell(f) \neq 0$ if and only if one of the following holds:
\begin{itemize}
\item $\ell \mid\mid N$ and $\ell \nmid M$; or
\item $\mathrm{ord}_\ell(M)=\mathrm{ord}_\ell(N)$.
\end{itemize}
By the results of \cite{Vigneras1989}, these are precisely the situations in which the residual mod $\vpi$ representation associated to $(V_{f,s}(\theta^{-1}))_{I_\ell}$ is one-dimensional, with Frobenius polynomial $\tilde{P}_\ell(X)=1-\tilde{a}_\ell(f) \theta^{-1}(\ell)\ell^{-s}( X$; in any other situation the residual $(V_{f,s}(\theta^{-1}))_{I_\ell}=0$. The lemma follows.
\end{proof}

We define $\cS_{f,s}^\theta \subset \Sigma_0$ to be the subset consisting of the primes $\ell$ such that
\begin{itemize}
\item $\ell \nmid N$, $a_\ell(f)\equiv  \theta(\ell)\ell^{s+1} + \epsilon(\ell)\theta^{-1}(\ell)\ell^{k-s-2} \mod \vpi$ and $ \epsilon(\ell)^{-1}\theta^2(\ell) \ell^{2s-k+3} \not\equiv 1 \mod \vpi$; or
\item $\ell \mid N$ and $a_\ell(f) \equiv \theta(\ell)\ell^{s} \mod \vpi$.
\end{itemize}
\begin{remark} For any particular example, this subset is easy to compute using a computer algebra system such as \cite{Magma}. See also Section \ref{section:example}. \end{remark}
The results of this section can be summarized by the following result.
\begin{proposition}\label{prop:delta-parity}
We have the congruence
\[
\sum_{\ell \in \Sigma_0} \delta_{\ell,f}(s,\theta) \equiv \vert \cS_{f,s}^\theta \vert \mod 2.
\]
\end{proposition}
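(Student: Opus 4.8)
The plan is to reduce the statement, prime by prime, to the local root counts already carried out in Lemmas~\ref{lem:lnotdivideN} and~\ref{lem:ldivideN}. By Corollary~\ref{cor:slodd} we have $\delta_{\ell,f}(s,\theta)\equiv d_{\ell,f}(s,\theta)\pmod{2}$ for every $\ell\in\Sigma_0$, so it suffices to prove
\[
\sum_{\ell\in\Sigma_0}d_{\ell,f}(s,\theta)\equiv|\cS_{f,s}^\theta|\pmod{2},
\]
and for this it is enough to show, for each individual $\ell\in\Sigma_0$, that $d_{\ell,f}(s,\theta)$ is odd precisely when $\ell\in\cS_{f,s}^\theta$.

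First I would treat the case $\ell\nmid N$. Here $V_f$ is unramified at $\ell$, so $(V_{f,s}(\theta^{-1}))_{I_\ell}$ is two-dimensional and $\tilde P_{\ell,s,\theta}$ has degree exactly $2$: its leading coefficient $\epsilon(\ell)\theta^{-2}(\ell)\ell^{-2s+k-1}$ is a $\vpi$-adic unit since $\ell\neq p$, $\ell\nmid N$, and $\theta(\ell)$ is a root of unity. Thus $d_{\ell,f}(s,\theta)\in\{0,1,2\}$, and the computation in the proof of Lemma~\ref{lem:lnotdivideN} shows that $\tilde\ell^{-1}$ is a root of $\tilde P_{\ell,s,\theta}$ exactly when the first congruence in the definition of $\cS_{f,s}^\theta$ holds, and that it is then a \emph{simple} root exactly when the second congruence also holds. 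Hence $d_{\ell,f}(s,\theta)=1$ if and only if $\ell\in\cS_{f,s}^\theta$, while the remaining (even) values $0$ and $2$ occur precisely when $\ell\notin\cS_{f,s}^\theta$. Next I would treat $\ell\mid N$. By the analysis in the proof of Lemma~\ref{lem:ldivideN}, the reduction $\tilde P_{\ell,s,\theta}$ has degree at most $1$, so $d_{\ell,f}(s,\theta)\in\{0,1\}$ and equals $1$ exactly when $a_\ell(f)\equiv\theta(\ell)\ell^{s}\pmod{\vpi}$, i.e. exactly when $\ell\in\cS_{f,s}^\theta$. Summing the resulting per-prime congruences over $\ell\in\Sigma_0$ gives the proposition.

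There is no genuine obstacle in this argument; the only step that needs care is the bookkeeping in the split case $\ell\nmid N$, where one must use that $\tilde P_{\ell,s,\theta}$ has degree exactly $2$ to conclude that a non-simple root at $\tilde\ell^{-1}$ has multiplicity $2$ and so is invisible modulo $2$ --- this is exactly why $\cS_{f,s}^\theta$ is defined via the simple-root condition rather than the mere root condition. A small additional check is that when $\ell\mid N$ and $a_\ell(f)\equiv 0\pmod{\vpi}$ one still has $\ell\notin\cS_{f,s}^\theta$, since $\theta(\ell)\ell^{s}$ is a $\vpi$-adic unit, consistently with $d_{\ell,f}(s,\theta)=0$.
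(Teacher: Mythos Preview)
Your proof is correct and follows essentially the same route as the paper, which simply cites Corollary~\ref{cor:slodd}, Lemmas~\ref{lem:lnotdivideN} and~\ref{lem:ldivideN}, and the definition of $\cS_{f,s}^\theta$. You have usefully made explicit the one point the paper leaves implicit: that in the case $\ell\nmid N$ the reduced polynomial $\tilde P_{\ell,s,\theta}$ has degree exactly~$2$, so the only odd value of $d_{\ell,f}(s,\theta)$ is~$1$, which is precisely what is needed to match membership in $\cS_{f,s}^\theta$ with the parity of $d_{\ell,f}(s,\theta)$.
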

\begin{proof}
This follows immediately from Corollary \ref{cor:slodd}, the definition of $\cS_{f,s}^\theta$, and Lemmas \ref{lem:lnotdivideN} and \ref{lem:ldivideN}.
\end{proof}

We are now prepared to prove the main result of this section, which relates the Selmer-coranks of $f$ and $g$.

\begin{theorem}\label{thm:main}
For any $\theta\in\hat\Delta$, we have the congruence
\[
\mathrm{corank}_{\cO_E}\ \Sel_i(\Afs(\theta^{-1})/\QQ) + \vert \cS_{f,s}^\theta \vert \equiv \mathrm{corank}_{\cO_E}\ \Sel_i(\Ags(\theta^{-1})/\QQ) + \vert \cS_{g,s}^\theta \vert \mod 2.
\]
\end{theorem}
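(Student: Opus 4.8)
The plan is to transfer the assertion from the Selmer group over $\QQ$ to Iwasawa-theoretic data over $\Qinf$, where the congruence-invariance we need is already available from Theorem~\ref{thm:sigma-invariants}, and to use the local computations of this section to reconcile the primitive and non-primitive pictures. I would start with a control theorem over $\QQ$: the inflation--restriction sequence together with \textbf{(inv)} and the analysis of the local condition at $p$ behind \eqref{eq:coinvariant} (as in the commutative-diagram arguments of Propositions~\ref{prop:surjectivefinite} and~\ref{prop:surjinv}) shows that
\[
\Sel_i(\Afs(\theta^{-1})/\QQ)\longrightarrow\bigl(\Sel_i(\Afs/\Qinf)^\theta\bigr)^{\Gamma_0},\qquad
\Sel_i^{\Sigma_0}(\Afs(\theta^{-1})/\QQ)\longrightarrow\bigl(\Sel_i^{\Sigma_0}(\Afs/\Qinf)^\theta\bigr)^{\Gamma_0}
\]
have finite kernel and cokernel (the second map is in fact an isomorphism, since at $p$ the quotient condition is controlled on the nose), and the same holds with $g$ in place of $f$. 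Taking $\Gamma_0$-invariants of the exact sequence in Corollary~\ref{cor:selmerquotient}, the primitive and non-primitive $\QQ$-coranks differ by $\sum_{\ell\in\Sigma_0}\corank_{\cO_E}H^1_{/i}(\QQ_\ell,\Afs(\theta^{-1}))$.

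The crux is then a parity computation at the level of $\Qinf$: I claim that modulo $2$,
\[
\corank_{\cO_E}\Sel_i(\Afs(\theta^{-1})/\QQ)\;+\;|\cS_{f,s}^\theta|\;\equiv\;\lambda\bigl(\Sel_i^{\Sigma_0}(\Afs/\Qinf)^\theta\bigr)\;+\;c_f(s,\theta),
\]
where $c_f(s,\theta)$ is a contribution supported at $p$ and $\infty$. This should follow from a global Euler-characteristic count, using: that $\Sel_i^{\Sigma_0}(\Afs/\Qinf)^\theta$ is $\Lambda$-cotorsion with vanishing $\mu$-invariant (Remark~\ref{rk:mu} and \textbf{($\boldsymbol{\mu=0}$)}) and has no finite-index $\Lambda$-submodule (the proof of Theorem~\ref{thm:finiteindex} adapts to the non-primitive conditions, dropping local conditions at $\Sigma_0$ only easing the surjectivity input), so that the $\cO_E$-coranks of its $\Gamma_0$-(co)invariants are governed by $\lambda$ and by the multiplicity of the trivial zero; the Poitou--Tate sequences \eqref{eq:PT1finite}--\eqref{eq:PT2finite}; the self-duality of the signed local conditions (Lemma~\ref{lem:orthfinite}); and finally Corollary~\ref{cor:slodd} together with Proposition~\ref{prop:delta-parity} to rewrite the various local multiplicities and the discrepancy between the $\QQ$-coranks and the $\Qinf$ $\lambda$-invariants in terms of $|\cS_{f,s}^\theta|$. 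The point of isolating $c_f(s,\theta)$ is that its only possibly-nontrivial pieces sit at $p$ — where the comparison uses that $\Col_{f,i}$ and $\Col_{g,i}$ agree modulo $\vpi$ by \eqref{eq:isomod} — and at the archimedean place, which sees only $T_f/\vpi T_f$; a short check then gives $c_f(s,\theta)=c_g(s,\theta)$.

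Granting the displayed congruence, the theorem follows at once: by Theorem~\ref{thm:sigma-invariants} (applicable under \textbf{($\boldsymbol{\mu=0}$)}) one has $\lambda\bigl(\Sel_i^{\Sigma_0}(\Afs/\Qinf)^\theta\bigr)=\lambda\bigl(\Sel_i^{\Sigma_0}(\Ags/\Qinf)^\theta\bigr)$, and comparing the congruence for $f$ and for $g$, with $c_f=c_g$, gives the assertion. The main obstacle is exactly the Euler-characteristic computation: in contrast with the ordinary case one must verify that the signed local condition at $p$ is compatible with taking $\Gamma_0$-(co)invariants and with reduction modulo $\vpi$, and pin down the parity of the global term. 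I expect the cleanest route is to run the whole count through the $\vpi$-torsion Selmer groups (Proposition~\ref{prop:torsionSel}) and the elementary identity $\corank_{\cO_E}D=\dim_{k_E}D[\vpi]-\dim_{k_E}D/\vpi D$ for a cofinitely generated $\cO_E$-module $D$, thereby reducing everything to comparing the mod-$\vpi$ Selmer groups of $\Afs(\theta^{-1})$ and of its Tate dual — both congruence-invariant by the over-$\QQ$ analogue of Proposition~\ref{prop:congSel} — with Theorem~\ref{thm:finiteindex} guaranteeing that no spurious finite submodules interfere with the comparison.
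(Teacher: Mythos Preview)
Your overall architecture is right: route through the $\lambda$-invariant of $\Sel_i^{\Sigma_0}(\Afs/\Qinf)^\theta$, which is congruence-invariant by Theorem~\ref{thm:sigma-invariants}, and account for the difference between primitive and non-primitive via the local $\delta_{\ell,f}$'s and Proposition~\ref{prop:delta-parity}. This is exactly what the paper does. But there is a real gap in your bridge between $\corank_{\cO_E}\Sel_i(\Afs(\theta^{-1})/\QQ)$ and $\lambda_{f,i}^\theta$. You assert that the defect $c_f(s,\theta)$ is supported only at $p$ and $\infty$ and hence depends only on the residual representation, but neither an Euler-characteristic count nor a control theorem yields this. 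A control theorem identifies $\corank_{\cO_E}\Sel_i(\Afs(\theta^{-1})/\QQ)$ with the order of vanishing at $T=0$ of the characteristic power series, and Poitou-Tate with self-dual local conditions gives a duality between the Selmer group and its dual, but neither tells you that $\lambda$ minus that order of vanishing is even, which is what $c_f=c_g$ amounts to.

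The paper closes this gap by invoking \cite[Proposition~3.10]{Gr}, whose argument rests on Flach's generalized Cassels-Tate pairing \cite{Flach}: the self-orthogonality of the signed local conditions (Lemma~\ref{lem:orthfinite}) guarantees that \cite[Theorem~2]{Flach} applies, producing a non-degenerate alternating pairing on the quotient of $\Sel_i$ by its maximal divisible subgroup, which forces that quotient to have square order at each finite layer; the resulting functional equation for the characteristic power series then pairs off its nontrivial zeros and gives $\corank\equiv\lambda\bmod 2$ directly (so in fact $c_f=0$, not merely $c_f=c_g$). Your alternative route through $\vpi$-torsion meets the same obstacle: the identity $\corank_{\cO_E} D=\dim_{k_E} D[\vpi]-\dim_{k_E} D/\vpi D$ is fine, but controlling $\dim_{k_E}\Sel_i(\Afs(\theta^{-1})/\QQ)/\vpi$ in terms of residual data again requires comparing the Selmer group with its dual, and the parity content of that comparison is precisely the alternating property of Flach's pairing.
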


\begin{proof}
Let $\lambda_{f,i}^\theta$ and $\lambda_{g,i}^\theta$ be the $\lambda$-invariants of   $\Sel_i(\Afs/\Qinf)^\theta$ and  $\Sel_i(\Afs/\Qinf)^\theta$ respectively. By Theorem \ref{thm:sigma-invariants}, the $\lambda$-invariants of $\Sel_i^{\Sigma_0}(\Afs/\Qinf)^\theta$ and $\Sel_i^{\Sigma_0}(\Ags/\Qinf)^\theta$ are equal, so by Corollary \ref{cor:selmerquotient} and Proposition \ref{prop:delta-parity} we have the congruence
\begin{equation}\label{eq:cong1}
\lambda_{f,i}^\theta + \vert \cS_{f,s}^\theta \vert \equiv \lambda_{g,i}^\theta + \vert \cS_{g,s}^\theta \vert \mod 2.
\end{equation}
%From the injectivity statement of Lemma \ref{lem:injsurj}, we deduce that for all $n \geq 0$, $\mathrm{corank}_{\ZZ_p}\ \Sel_i(\Afs / K_n)$ is bounded above by $\lambda_{f,i}$.
The proof of \cite[Proposition 3.10]{Gr}  shows that
\begin{equation}\label{eq:cong2}
\mathrm{corank}_{\cO_E}\ \Sel_i(\Afs(\theta^{-1})/\QQ) \equiv \lambda_{f,i}^\theta \mod 2,
\end{equation}
and similarly for $g$. We note that, while the proof in {\em loc. cit.} uses the Cassels-Tate pairing to prove that an appropriate-defined Shafarevich-Tate group has square order, for our purposes we must use the generalized pairing of Flach \cite{Flach}. The auto-orthogonality of Lemma \ref{lem:orthfinite} shows that \cite[Theorem 2]{Flach} applies, and the rest of the proof goes through verbatim.

Combining \eqref{eq:cong1} and \eqref{eq:cong2} now gives the desired result.
\end{proof}

Denote by $L_f(s)$ the complex $L$-function associated to the modular form $f$. Recall that $f$ is of even weight $k=2r$, and let $r_{\mathrm{an}}(f) = \mathrm{ord}_{s=r}L_f(s)$  denote the analytic rank of $f$. We have the following generalization of \cite[Theorem 4]{hatleyparity}.

\begin{corollary}\label{cor:parity}
Assume that $f$ and $g$ have trivial nebentypus. Then we have the congruence
\[
r_{\mathrm{an}}(f) + \vert \cS_{f,r-1}^{\mathbf{1}} \vert \equiv r_{\mathrm{an}}(g) + \vert \cS_{g,r-1}^{\mathbf{1}} \vert \mod 2,
\]
where $\mathbf{1}$ denotes the trivial character on $\Delta$.
\end{corollary}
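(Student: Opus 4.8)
The plan is to specialise Theorem~\ref{thm:main} to the self-dual twist and then feed the resulting congruence into the Parity Conjecture. Since $f$ has even weight $k=2r$, the $L$-function $L_f(s)$ has centre $s=r$, and because of the conventions fixed in \S\ref{S:signed} --- where $A_f=V_f/T_f(1)$, so that $A_{f,r-1}=A_f\otimes\chi^{r-1}=V_f(r)/T_f(r)$ --- the object attached to the central value is $A_{f,r-1}$. Concretely, I would apply Theorem~\ref{thm:main} with $\theta=\mathbf{1}$, $i=2$ and $s=r-1$; since $A_{f,r-1}(\mathbf{1}^{-1})=A_{f,r-1}$, this gives
\[
\corank_{\cO_E}\Sel_2(A_{f,r-1}/\QQ)+\vert\cS_{f,r-1}^{\mathbf{1}}\vert\equiv\corank_{\cO_E}\Sel_2(A_{g,r-1}/\QQ)+\vert\cS_{g,r-1}^{\mathbf{1}}\vert\mod 2.
\]

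Next I would translate the signed Selmer coranks into Bloch--Kato coranks. Since $r\ge1$ we have $s=r-1\in\{0,1,\ldots,k-2\}$, so Proposition~\ref{prop:signed-eq-bk} applies and yields $\Sel_2(A_{f,r-1}/\QQ)=\Sel_{\mathrm{BK}}(A_{f,r-1}/\QQ)$, and likewise for $g$. The $\cO_E$-corank of $\Sel_{\mathrm{BK}}(A_{f,r-1}/\QQ)$ equals the $E$-dimension of the Bloch--Kato Selmer group $H^1_f(\QQ,V_f(r))$; and from the determinant computation $\det V_f=\chi^{1-k}$ (valid because the nebentypus is trivial) one checks that $V_f(r)\cong V_f(r)^*(1)$, so $V_f(r)$ is Kummer self-dual and governs the central value $L_f(r)$.

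Finally I would invoke the (known cases of the) Parity Conjecture for newforms with trivial nebentypus, which asserts that $\dim_E H^1_f(\QQ,V_f(r))$ has the same parity as $\mathrm{ord}_{s=r}L_f(s)=r_{\mathrm{an}}(f)$, and similarly for $g$. Substituting the two congruences $\corank_{\cO_E}\Sel_{\mathrm{BK}}(A_{f,r-1}/\QQ)\equiv r_{\mathrm{an}}(f)\mod 2$ and its analogue for $g$ into the displayed congruence above yields the corollary.

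The main obstacle is not a computation but careful bookkeeping of normalisations: one must verify that the twist $A_{f,s}(\theta^{-1})$ in Theorem~\ref{thm:main}, evaluated at $\theta=\mathbf{1}$ and $s=r-1$, really is the self-dual object attached to the central $L$-value under all the conventions of \S\ref{S:signed} (in particular the extra Tate twist hidden in $A_f=V_f/T_f(1)$), and that Proposition~\ref{prop:signed-eq-bk} is applied with the index $i=2$, for which no auxiliary hypothesis is needed (the case $i=1$ would require the condition recorded in the remark following that proposition). A secondary subtlety is the identification of the $\cO_E$-corank of the discrete signed/Bloch--Kato Selmer group with the $E$-dimension of the corresponding $V_f$-coefficient Selmer group, so that the integer fed into the Parity Conjecture is literally $r_{\mathrm{an}}(f)$.
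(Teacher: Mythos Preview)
Your proposal is correct and follows essentially the same approach as the paper: specialise Theorem~\ref{thm:main} at $(s,\theta,i)=(r-1,\mathbf{1},2)$, use Proposition~\ref{prop:signed-eq-bk} to identify $\Sel_2(A_{f,r-1}/\QQ)$ with the Bloch--Kato Selmer group, and then invoke Nekov\'a\v{r}'s parity theorem. Your extra care in fixing $i=2$ and in tracking the normalisation $A_f=(V_f/T_f)(1)$ so that $A_{f,r-1}=V_f(r)/T_f(r)$ is exactly the bookkeeping needed, and the paper likewise appeals to the identification $\corank_{\cO_E}\Sel_i(\Afs/\QQ)=\dim_E\Sel_i(V_{f,s}/\QQ)$ (citing Greenberg) to pass from $\cO_E$-coranks to the integer governed by the parity theorem.
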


\begin{proof}
Let us first recall the parity result of Nekovar \cite[Theorem B]{nekovar}, which relates the complex $L$-function of $f$ to the Selmer group at the central critical twist:
\[
\mathrm{corank}_{\cO_E}\ \Sel_{\mathrm BK}(A_{f,r-1}/\QQ) \equiv r_{\mathrm{an}}(f) \mod 2.
\]
As shown in \cite[p.100]{Gr},  $$\mathrm{dim}_E\ \Sel_i(V_{f,s} / \QQ) = \mathrm{corank}_{\cO_E}\ \Sel_i(\Afs/\QQ)$$
for all $s\in\ZZ$. Furthermore, Proposition \ref{prop:signed-eq-bk} tells us that the signed Selmer group $\Sel_2(\Afs / \QQ)$ is equal to the Bloch-Kato Selmer group $\Sel_{\mathrm{BK}}(\Afs / \QQ)$ for $s\in\{0,\ldots, k-2\}$.
Therefore, we deduce that
\[
r_{\mathrm{an}}(f)\equiv\mathrm{corank}_{\cO_E}\ \Sel_i(A_{f,r-1}/\QQ)\mod 2
\]
and similarly for $g$.
Now the result follows from Theorem \ref{thm:main}.
\end{proof}

\begin{remark}
The hypotheses that $f$ and $g$ have trivial nebentypus is made solely so that we can use known cases of the parity conjecture \cite[Theorem B]{nekovar}. Since the parity conjecture is expected to hold more generally, our result should also hold for modular forms of nontrivial nebentypus; indeed, there are four pairs of modular forms of level $N \leq 60$ of weight $k=2$, with nontrivial nebentypus, which are congruent mod $p=3$ and satisfy $a_p(f)=0$, and each of these pairs has been computationally shown to satisfy this theorem. Finding congruences between such forms is computationally taxing, hence the small sample size. Once a congruence is known to exist between two modular forms, computing the relevant data to verify Corollary \ref{cor:parity} is fast. See Section \ref{section:example} for further discussion of the hypotheses imposed throughout our paper.
\end{remark}

\section{Unramified base fields and a transition formula}\label{sec:kida}

\subsection{Signed Selmer groups over unramified base fields}\label{sec:urselmer}

Let $K/\Qp$ be a finite unramified extension. We may identify $\Gamma$ with the Galois group $\Gal(K(\mu_{p^\infty})/K)$. If $T$ is a free rank-$d$ $\cO_E$-module equipped with a crystalline continuous $G_{K}$-action, we write $\NN_K(T)$ for its Wach module over $K$. This is now a module over $\cO_E\otimes\AA_{K}^+$, where $\AA_K^+$ is defined to be $\cO_K\lb\pi\rb=\cO_K\otimes\AQp$. Note that if $T$ is equipped with a $G_{\Qp}$-action, then $\NN_K(T)$ and  $\NN(T)$ (the Wach module over $\Qp$) are related by
\begin{equation}
\NN_K(T)=\cO_K\otimes_{\Zp}\NN(T),\quad\NN_K(T)^{G_{\Qpinf}}=\NN(T).\label{eq:basechangewach}
\end{equation}

Define $\HIw(K,T)$ to be the inverse limit $\varprojlim H^1(K(\mu_{p^n}),T)$. The isomorphism \eqref{eq:Herr} generalizes to
\[
\HIw(K,\T_f)\cong \NN_K(\T_f)^{\psi=1}.
\]
If $n_{1}$, $n_{2}$ is an $\cO_E\otimes\AQp$-basis of $\NN(T)$, then \eqref{eq:basechangewach} tells us that it is an $\cO_E\otimes\AA_K^+$-basis of $\NN_K(T)$. Furthermore, we may define the two Coleman maps
\[
\Col_{K,f,i}:\HIw(K,\T_f)\rightarrow \Lambda(\Gamma)\otimes\cO_K,
\]
given by the relation
\[
(1-\vp)(z)=\Col_{K,f,1}(z)\cdot (1+\pi)\vp(n_{1})+\Col_{K,f,2}(z)\cdot (1+\pi)\vp(n_{2}).
\]
Then, $\ker(\Col_{K,f,i})$ allows us to define the local conditions
\[
H^1_i(K(\mu_{p^\infty}),A_f)\subset H^1(K(\mu_{p^\infty}),A_f)
\]
via the Tate duality
\[
\HIw(K,\T_f)\times H^1(K(\mu_{p^\infty}),A_f)\rightarrow E/\cO_E
\]
as before. Consequently, if $F / \QQ$ is a number field where $p$ is unramified, we have the signed Selmer groups
\[
\Sel_i(A_f/F(\mu_{p^\infty}))=\ker\left(H^1(F(\mu_{p^\infty}),A_f)\rightarrow \prod_v H^1_{/i}(F(\mu_{p^\infty})_v,A_f)\right),
\]
where $v$ runs through all places of $F(\mu_{p^\infty})$ and the local conditions outside $p$ are defined using the unramified subgroups.
We may equally define the corresponding Selmer groups for twists of $A_f$, namely, for $s\in\ZZ$,
\[
\Sel_i(\Afs/F(\mu_{p^\infty}))=\ker\left(H^1(F(\mu_{p^\infty}),\Afs)\rightarrow \prod_v H^1_{/i}(F(\mu_{p^\infty})_v,\Afs)\right),
\]
where the local conditions $H^1_{i}(F(\mu_{p^\infty})_v,\Afs)$ are given by $H^1_{i}(F(\mu_{p^\infty})_v,A_f)\otimes\chi^s$. As before, we may take $\Sigma$ to be a finite set of primes containing $p$, $\infty$, and the primes which divide $N$, and then we have
\[
\Sel_i(\Afs/F(\mu_{p^\infty})) \cong \ker\left(H^1_\Sigma(F(\mu_{p^\infty}),\Afs)\rightarrow \prod_{v
\mid \ell, \ell \in \Sigma} H^1_{/i}(F(\mu_{p^\infty})_v,\Afs)\right),
\]
We write $F_\infty$ for the $\Zp$-cyclotomic extension of $F$. If $\theta\in\hat\Delta$, then we may take $\theta$-isotypic components everywhere as before and we have the local conditions
\[
H^1_i(F_\infty,\Afs(\theta^{-1}))=H^1_i(F(\mu_{p^\infty}),\Afs)^\theta
\]
and the Selmer groups
\[
\Sel_i(\Afs(\theta^{-1})/F_\infty)=\Sel_i(\Afs/F(\mu_{p^\infty}))^\theta.
\]

\subsection{Twists of signed Selmer groups and control theorems}

The local conditions at $p$ we defined in \S\ref{sec:urselmer} are compatible in the following sense.

\begin{lemma}\label{lem:compatible-local-condition}
Let $K/\Qp$ be a finite unramified extension, and let $N=\mathrm{Gal}(K(\mu_{p^\infty})/{\Qpinf})$. For any $s\in\ZZ$ and $i\in\{1,2\}$, we have the isomorphism
\[
H^1_i(K(\mu_{p^\infty}),\Afs)^N\cong
H^1_i(\Qpinf,\Afs).
\]
\end{lemma}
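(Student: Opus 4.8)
The plan is to deduce the statement from the base-change description of the Wach module in \eqref{eq:basechangewach} together with the duality definition of the local conditions. Write $\Qpinf = \QQ_p(\mu_{p^\infty})$ and recall that $N = \Gal(K(\mu_{p^\infty})/\Qpinf)$ is a finite group (of order $[K:\Qp]$), since $K/\Qp$ is unramified and hence $K(\mu_{p^\infty})/\Qpinf$ is the unramified extension cut out by $K$. The first step is to compare the two local conditions at the level of Iwasawa cohomology: by \eqref{eq:basechangewach} the basis $n_1, n_2$ of $\NN(\T_f)$ over $\cO_E \otimes \AQp$ is simultaneously a basis of $\NN_K(\T_f)$ over $\cO_E \otimes \AA_K^+ = \cO_K \otimes (\cO_E \otimes \AQp)$, and the Coleman maps $\Col_{K,f,i}$ are defined by literally the same relation $(1-\vp)(z) = \sum_i \Col_{K,f,i}(z)\cdot(1+\pi)\vp(n_i)$. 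Consequently $\Col_{K,f,i} = \cO_K \otimes \Col_{f,i}$ under the identification $\HIw(K,\T_f) = \cO_K \otimes_{\Zp} \HIw(\Qp,\T_f)$ coming from $\NN_K(\T_f) = \cO_K \otimes \NN(\T_f)$ and $\psi=1$ being $\cO_K$-linear; in particular $\ker(\Col_{K,f,i}) = \cO_K \otimes_{\Zp} \ker(\Col_{f,i})$, and taking $N$-invariants (equivalently, $G_{\Qpinf}$-invariants inside the $K$-coefficients) recovers $\ker(\Col_{f,i})$.

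Next I would transfer this to the $A_f$-side. The local condition $H^1_i(K(\mu_{p^\infty}),A_f)$ is by definition the orthogonal complement of (the image of) $\ker(\Col_{K,f,i})$ under the perfect Tate-duality pairing $\HIw(K,\T_f) \times H^1(K(\mu_{p^\infty}),A_f) \to E/\cO_E$, and similarly over $\Qpinf$. The group $N$ acts on both factors of the $K$-level pairing, compatibly and isometrically, and the $\Qpinf$-level pairing is obtained from it by taking $N$-invariants on the $\HIw$ side and $N$-(co)invariants on the $H^1$ side — more precisely, by inflation-restriction (using that $N$ is finite of order prime to nothing dangerous, or more robustly by the fact that $H^1(N, A_f^{G_{K(\mu_{p^\infty})}}) $ vanishes after the identifications, using \textbf{(inv)}) we have $H^1(K(\mu_{p^\infty}),A_f)^N \cong H^1(\Qpinf, A_f)$, just as in the proof of Lemma~\ref{lem:injsurj}. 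Under this last identification, the orthogonal complement of $\ker(\Col_{f,i})$ inside $H^1(\Qpinf,A_f)$ is exactly the $N$-invariants of the orthogonal complement of $\ker(\Col_{K,f,i}) = \cO_K\otimes\ker(\Col_{f,i})$ inside $H^1(K(\mu_{p^\infty}),A_f)$: this is a formal consequence of the orthogonal complement of an $N$-stable submodule being $N$-stable, together with the compatibility of the two pairings. This yields $H^1_i(K(\mu_{p^\infty}),A_f)^N \cong H^1_i(\Qpinf,A_f)$, and tensoring with $\chi^s$ (which is the definition of the twisted local conditions) gives the statement for general $s$.

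The step I expect to require the most care is the clean identification of the $K$-level Tate pairing with the $\Qpinf$-level one via $N$-invariance — i.e. checking that "orthogonal complement" genuinely commutes with taking $N$-fixed points here. This needs a small diagram chase combining inflation-restriction for the finite group $N$ (where \textbf{(inv)} guarantees the relevant $H^0$ and $H^1$ terms behave), the fact that corestriction/restriction are adjoint under Tate duality, and the $\cO_K$-linearity of everything in sight. Once that bookkeeping is set up, the rest — the explicit comparison $\Col_{K,f,i} = \cO_K\otimes\Col_{f,i}$ and hence $\ker(\Col_{K,f,i}) = \cO_K\otimes\ker(\Col_{f,i})$, and the final twist by $\chi^s$ — is routine.
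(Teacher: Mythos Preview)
Your overall strategy coincides with the paper's: reduce to $s=0$ by twisting, compare the Coleman maps over $K$ and over $\Qp$ via the base-change relation \eqref{eq:basechangewach}, use inflation--restriction to identify the ambient $H^1$'s, and pass through Tate duality. However, two technical points deserve correction.

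First, the operator $\psi$ on $\NN_K(\T_f)=\cO_K\otimes_{\Zp}\NN(\T_f)$ is \emph{not} $\cO_K$-linear: it is $\sigma^{-1}$-semilinear, where $\sigma$ is the arithmetic Frobenius on $\cO_K$. Hence your identification $\HIw(K,\T_f)=\cO_K\otimes_{\Zp}\HIw(\Qp,\T_f)$, and with it the formula $\ker(\Col_{K,f,i})=\cO_K\otimes_{\Zp}\ker(\Col_{f,i})$, does not follow as stated. The paper avoids this by arguing on the dual side: Pontryagin duality exchanges $N$-invariants on $H^1(-,A_f)$ with $N$-\emph{coinvariants} on $\HIw(-,\T_f)$, so the statement to prove becomes $(\ker\Col_{K,f,i})_N\cong\ker\Col_{f,i}$. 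Once one knows $\HIw(K,\T_f)_N\cong\HIw(\Qp,\T_f)$ (which follows by dualizing the inflation--restriction isomorphism on the $A_f$-side, i.e.\ via corestriction), the compatibility of the Coleman maps under $N$-coinvariants is an immediate consequence of \eqref{eq:basechangewach}, with no $\cO_K$-linearity of $\psi$ required. Your ``orthogonal complement commutes with $N$-invariants'' step is really this coinvariant statement in disguise; phrasing it as invariants on the $\T_f$-side is what led you to the erroneous linearity claim.

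Second, the inflation--restriction step needs $A_f^{G_{K(\mu_{p^\infty})}}=0$, not merely \textbf{(inv)} over $\Qpinf$. This is where the hypothesis that $K/\Qp$ is unramified enters: the paper invokes the proof of \cite[Lemma~4.4]{lei09} to extend \textbf{(inv)} to the unramified base $K$. You should make this explicit.
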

\begin{proof}
We may assume that $s=0$ by twisting. By duality, this is equivalent to saying that
\begin{equation}\label{eq:compatiblekernel}
\left(\ker\Col_{K,f,i}\right)_N\cong\ker\Col_{f,i}.
\end{equation}

We remark that since $K/\Qp$ is unramified, the proof of \cite[Lemma~4.4]{lei09} shows that $A_f^{G_{K(\mu_{p^\infty})}}=0$. Hence, the inflation-restriction exact sequence tells us that
\[
H^1(K(\mu_{p^\infty}),\Afs)^N\cong
H^1(\Qpinf,\Afs).
\]
This implies that
\[
\HIw(K,\T_f)_N\cong \HIw(\Qp,\T_f).
\]
Therefore, our claim in \eqref{eq:compatiblekernel} follows from the corresponding compatibility condition of the Coleman maps, which is a consequence of \eqref{eq:basechangewach}.
\end{proof}

From now on, we fix an integer $s$ and $\theta\in\hat\Delta$. In order to ease notation, we shall write $A$ for $\Afs(\theta^{-1})$.

Let $F$ be a number field as in the previous section. We suppose that $F/\QQ$ is Galois and write $G=\mathrm{Gal}(F_\infty/\Kinf)$. We now establish analogues of the results of \cite[\S 2.3]{pollackweston06}, beginning with the following control lemma.

\begin{lemma}\label{lem:control-base-field}
For $i\in\{1,2\}$, the restriction map
\[
\Sel_i(A / \Kinf) \to \Sel_i(A/F_\infty)^G
\]
has finite kernel and cokernel.
\end{lemma}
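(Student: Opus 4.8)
The plan is to follow the standard strategy for control theorems in Iwasawa theory, comparing the Selmer groups term-by-term via the inflation-restriction sequence. Concretely, I would set up the commutative diagram
\[
\begin{array}{ccccccc}
0&\rightarrow&\Sel_i(A/\Kinf)&\rightarrow&H^1_\Sigma(\Kinf,A)&\rightarrow&\prod_v H^1_{/i}(\Kvinf,A)\\
&&\downarrow&&\downarrow\beta&&\downarrow\gamma\\
0&\rightarrow&\Sel_i(A/F_\infty)^G&\rightarrow&H^1_\Sigma(F_\infty,A)^G&\rightarrow&\left(\prod_w H^1_{/i}(F_{\infty,w},A)\right)^G,
\end{array}
\]
and apply the snake lemma: the kernel and cokernel of the leftmost vertical map are then controlled by $\ker\beta$, $\coker\beta$ (or rather the piece of it landing in the sequence), and $\ker\gamma$. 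So the proof reduces to showing that $\ker\beta$ and $\ker\gamma$ are finite and that the relevant cokernel of $\beta$ is finite.

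First I would handle the global cohomology map $\beta$. By the inflation-restriction exact sequence, $\ker\beta = H^1(G, A^{G_{F_\infty}})$ and the cokernel is controlled by $H^2(G, A^{G_{F_\infty}})$ together with $H^1(G, H^1_\Sigma(F_\infty, A))$. Since $F/\QQ$ is a finite extension, $G$ is a finite group, and $A^{G_{F_\infty}}$ is a cofinitely generated $\cO_E$-module; the cohomology of a finite group acting on a cofinitely generated module is finite up to the divisible part, and in fact $H^i(G,-)$ for $i\geq 1$ of a finite group is annihilated by $|G|$, so these groups are cofinitely generated $\cO_E$-modules annihilated by $|G|$, hence finite. (Here one uses hypothesis \textbf{(inv)}, or rather its consequence that the relevant invariants vanish or are small, exactly as in Lemma~\ref{lem:injsurj} and the proof of Proposition~\ref{prop:surjectivefinite}.) This is routine.

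Next I would handle the local map $\gamma$, which is a product over places $v$ of $\Kinf$ of maps $H^1_{/i}(\Kvinf,A)\to \prod_{w\mid v} H^1_{/i}(F_{\infty,w},A)^{G_v}$ where $G_v$ is the relevant local Galois group. For $v\nmid p$ this is the same kind of argument as in Proposition~\ref{prop:surjinv} and \cite[Lemma 4.7]{Gr}: the kernel is a quotient of $H^1(G_v, A^{G_{F_{\infty,w}}})$, which is finite for the same reasons as above, and only finitely many $v$ are ramified so all but finitely many contribute nothing. For $v = p$ the key input is Lemma~\ref{lem:compatible-local-condition}, which says precisely that the local conditions at $p$ over $F_\infty$ restrict to the local conditions over $\Kinf$; combined with \eqref{eq:coinvariant}-type vanishing (i.e. $H^1_i(\Kpinf,A)_{G}$ and the corresponding invariants are controlled because $\image\Col_{f,i}$ has no $\Gamma_0$-coinvariants), one gets that the map on the quotients $H^1_{/i}$ at $p$ has finite (in fact often trivial) kernel and that its cokernel is finite.

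I expect the main obstacle to be the local analysis at $p$: one must check that the compatibility of Lemma~\ref{lem:compatible-local-condition} really does translate into finiteness of the kernel and cokernel of $\gamma$ at $p$, which requires carefully tracking the $G$-(co)invariants of the exact sequence $0\to H^1_i(F_{\infty,w},A)\to H^1(F_{\infty,w},A)\to H^1_{/i}(F_{\infty,w},A)\to 0$ and invoking the structure of $\image\Col_{K,f,i}$ as a $\Lambda(\Gamma)\otimes\cO_K$-module together with \eqref{eq:basechangewach}. Everything else is a standard diagram chase once the two vertical maps $\beta$ and $\gamma$ are understood, and the snake lemma then delivers the finiteness of the kernel and cokernel of $\Sel_i(A/\Kinf)\to\Sel_i(A/F_\infty)^G$.
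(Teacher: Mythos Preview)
Your proposal is correct and follows essentially the same approach as the paper: set up the commutative diagram, control $\ker\beta$, $\coker\beta$, and $\ker\gamma_v$ for $v\nmid p$ via inflation--restriction and the finiteness of $G$, invoke Lemma~\ref{lem:compatible-local-condition} at the places above $p$, and conclude with the snake lemma. The only difference is that the paper's treatment at $p$ is cleaner than you anticipate---Lemma~\ref{lem:compatible-local-condition} gives $\ker\gamma_v=0$ outright for $v\mid p$, so the ``main obstacle'' you flag dissolves immediately and no further analysis of $\image\Col_{K,f,i}$ is needed.
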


\begin{proof}
By definition of the signed Selmer groups, we have the commutative diagram
 \[\begin{array}{ccccccc}
 0&\rightarrow&\Sel_i(A / \Kinf)&\rightarrow &H^1_\Sigma(\Kinf,A)&\rightarrow&\prod H^1_{/i}(K_{\infty,\ell},A)\\
 &&\downarrow^{\alpha} &&\downarrow^{\beta} &&\downarrow^{\gamma_v} \\
 0&\rightarrow&\Sel_i(A/F_\infty)^{G}&\rightarrow &H^1_\Sigma(F_\infty,A)^{G}&\rightarrow&\prod H^1_{/i}(F_{\infty,v},A)^{G}.
 \end{array}
 \]
 Lemma~\ref{lem:compatible-local-condition} tells us that $\ker\gamma_v=0$ for all $v|p$.
Since $G$ is finite and $A$ is cofinitely generated, the inflation-restriction exact sequence shows that $\ker \beta$ and $\coker \ \beta$ are finite. Similarly, the same is true for $\ker\gamma_v$ whenever $v \nmid p$. Hence the result follows from the snake lemma.
\end{proof}

Recall our running assumptions \textbf{(tor)} and \textbf{($\boldsymbol{\mu=0}$)}, which say that $\Sel_i(A/ K_\infty)$ is $\Lambda$-cotorsion with zero $\mu$-invariant. The following result tells us that these properties persist over $F$.

\begin{proposition}\label{prop:tor-F}
If $\Sel_i(A / K_\infty)$ is $\Lambda$-cotorsion with $\mu$-invariant zero, then the same is true for $\Sel_i(A / F_\infty)$.
\end{proposition}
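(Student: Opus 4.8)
The plan is to transcribe the descent argument of \cite[\S 2.3]{pollackweston06}: the control lemma (Lemma~\ref{lem:control-base-field}) is the only input involving Galois cohomology, and everything afterwards is commutative algebra over the Iwasawa algebra $\Lambda_F := \cO_E\lb\Gal(F_\infty/F)\rb$ (non-canonically isomorphic to $\Lambda$) and its reduction $\Lambda_F/\vpi$. Write $\mathfrak{X} = \Sel_i(A/F_\infty)^\vee$ and $\mathfrak{X}_0 = \Sel_i(A/K_\infty)^\vee$. Since $\mathfrak{X}$ is a quotient of $H^1_\Sigma(F_\infty,A)^\vee$ it is finitely generated over $\Lambda_F$, and it carries an action of $G = \Gal(F_\infty/K_\infty)$ with $\cO_E\lb\Gal(F_\infty/\QQ)\rb = \Lambda_F[G]$ (a crossed product). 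Dualizing Lemma~\ref{lem:control-base-field} produces a map $\mathfrak{X}_G := \mathfrak{X}/I_G\mathfrak{X} \to \mathfrak{X}_0$ with finite kernel and cokernel, where $I_G$ is the augmentation ideal of $G$; in particular $\mathfrak{X}_G$ is $\Lambda$-torsion.

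Next I would run the following chain of implications. By hypothesis $\mathfrak{X}_0$ is a finitely generated torsion $\Lambda$-module with $\mu=0$, so $\mathfrak{X}_0/\vpi\mathfrak{X}_0$ is finite over $k_E := \cO_E/\vpi$ (for a finitely generated torsion $\Lambda$-module this is exactly what $\mu=0$ says, by comparing ranks over $k_E((X))$). Reducing the control map modulo $\vpi$, and using that finite groups stay finite after $\otimes_{\cO_E}k_E$ and after passage to $\vpi$-torsion, one gets that $(\mathfrak{X}/\vpi\mathfrak{X})_G = \mathfrak{X}_G/\vpi\mathfrak{X}_G$ is also finite over $k_E$. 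Now $\mathfrak{X}/\vpi\mathfrak{X}$ is finitely generated over the discrete valuation ring $\Lambda_F/\vpi\cong k_E\lb X\rb$, carrying a $k_E\lb X\rb$-linear action of $G$; since $k_E((X))[G]$ is local ($G$ is a $p$-group and $\mathrm{char}\,k_E=p$), any nonzero $k_E((X))[G]$-module has nonzero $G$-coinvariants, so applying this to $(\mathfrak{X}/\vpi\mathfrak{X})\otimes_{k_E\lb X\rb}k_E((X))$ shows that finiteness of $(\mathfrak{X}/\vpi\mathfrak{X})_G$ forces $\mathfrak{X}/\vpi\mathfrak{X}$ to have rank $0$ over $k_E\lb X\rb$, i.e.\ to be finite over $k_E$. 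Finally, for a finitely generated $\Lambda_F$-module, finiteness of the mod-$\vpi$ reduction over $k_E$ is equivalent to the module being $\Lambda_F$-torsion with $\mu$-invariant zero (again by comparing ranks over $k_E((X))$), which is the desired conclusion for $\Sel_i(A/F_\infty)$.

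The only non-formal step — and the one I expect to require care — is the implication "$(\mathfrak{X}/\vpi\mathfrak{X})_G$ finite $\Rightarrow$ $\mathfrak{X}/\vpi\mathfrak{X}$ finite": this genuinely uses that $G$ is a $p$-group, equivalently that $\cO_E\lb\Gal(F_\infty/\QQ)\rb$ is local with residue field $k_E$. This is the setting relevant to the Kida formula of Theorem~\ref{thm:kida}, where $F'/F$ is a finite Galois $p$-extension; and it is not a cosmetic hypothesis, since along a non-$p$ layer $G$ can act through a faithful character and kill the coinvariants of a free module — the algebraic shadow of Selmer rank growth there. The remaining ingredients — identifying $\mathfrak{X}_G$ with $(\Sel_i(A/F_\infty)^G)^\vee$, the two reformulations of $\mu=0$, and tracking finiteness through the control map — are routine and transcribe directly from the $p$-ordinary arguments of \cite{pollackweston06} and from \S\ref{section:invariants}.
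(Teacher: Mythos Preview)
Your argument is correct and follows essentially the same route as the paper, which simply invokes Lemma~\ref{lem:control-base-field} together with Nakayama's lemma for compact local rings (referring to \cite[Corollary~3.4]{hachimori-matsuno} for details) --- this is precisely the descent you unwind explicitly via $k_E((X))[G]$. Your emphasis that the step ``$(\mathfrak{X}/\vpi\mathfrak{X})_G$ finite $\Rightarrow$ $\mathfrak{X}/\vpi\mathfrak{X}$ finite'' genuinely requires $G$ to be a $p$-group is well taken: the paper's proof relies on this too (through the locality of the relevant group ring), and although the hypothesis $[F:\QQ]=p^j$ is only stated immediately after the proposition, it is implicit in the cited reference and in the intended application to Theorem~\ref{thm:kida}.
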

\begin{proof}
This follows from Lemma \ref{lem:control-base-field} and Nakayama's lemma for compact local rings. We refer the reader to the proof of \cite[Corollary 3.4]{hachimori-matsuno} for details.
\end{proof}

Now suppose $[F:\QQ]=p^j$ for some $j$, and fix an extension $E' / E$ which contains $\mu_{p^j}$. Let $\cO_{E'}$ be the ring of integers of $E'$, and let $\Lambda' = \cO_{E'}\lb\Gamma_0\rb$ denote the corresponding Iwasawa algebra. By abuse of notation, let $\hat{G}$ be the set of all characters $\psi : G \rightarrow \cO_{E'}^\times$; since $|G|$ is odd, any such character is necessarily even. For each $\psi \in \hat{G}$ we may form the twist
\[
A^\psi = A \otimes_{\cO_E} \cO_{E'}(\psi),
\]
and we regard the corresponding Selmer group $\Sel_i(A^\psi / K_\infty)$ as a $\Lambda'$-module (so its $\lambda$-invariant refers to its $\cO_{E'}$-corank rather than its $\cO_E$-corank). The following is \cite[Proposition 2.6]{pollackweston06}.

\begin{proposition}\label{prop:character-control}
If $G$ is abelian, then there is a natural map
\[
\sum_{\psi \in \hat{G}} \Sel_i(A^\psi/ K_\infty) \rightarrow \Sel_i(A/ F_\infty) \otimes_{\cO_E} \cO_{E'}
\]
with finite kernel and cokernel.
\end{proposition}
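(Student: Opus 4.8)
The plan is to deduce this exactly as in the proof of \cite[Proposition~2.6]{pollackweston06}, from Shapiro's lemma together with the decomposition of the group ring $\cO_{E'}[G]$; the only point that genuinely uses the machinery of the present paper, rather than a formal argument, is the compatibility of the signed local conditions at $p$, which is supplied by Lemma~\ref{lem:compatible-local-condition}. First I would apply Shapiro's lemma. Give $\cO_E[G]$ the $G_{\Kinf}$-action coming from $G_{\Kinf}\twoheadrightarrow G$; since $A=\Afs(\theta^{-1})$ extends to $G_{\Kinf}$ and $G_{F_\infty}\triangleleft G_{\Kinf}$, the usual identity $\mathrm{Ind}_{G_{F_\infty}}^{G_{\Kinf}}\bigl(A|_{G_{F_\infty}}\bigr)\cong A\otimes_{\cO_E}\cO_E[G]$ holds, yielding canonical isomorphisms $H^1_\Sigma(F_\infty,A)\cong H^1_\Sigma(\Kinf,A\otimes_{\cO_E}\cO_E[G])$ and $\bigoplus_{w\mid v}H^1(F_{\infty,w},A)\cong H^1(\Kvinf,A\otimes_{\cO_E}\cO_E[G])$ for every place $v$ of $\Kinf$, compatibly with localisation. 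For $v\nmid p$ these carry unramified subgroups to unramified subgroups by the classical base-change argument; for $v=p$, Lemma~\ref{lem:compatible-local-condition}, applied to the finite unramified extension $F_w/\QQ_p$ and to the decomposition group of $p$ inside $G$, identifies $\bigoplus_{w\mid p}H^1_i(F_{\infty,w},A)$ with $H^1_i\bigl(\Kpinf,A\otimes_{\cO_E}\cO_E[G]\bigr)$, the latter being the signed condition over the unramified base field from \S\ref{sec:urselmer}. Hence $\Sel_i(A/F_\infty)\cong\Sel_i\bigl(A\otimes_{\cO_E}\cO_E[G]/\Kinf\bigr)$, and as $\cO_{E'}$ is finite free over $\cO_E$ this base-changes to $\Sel_i(A/F_\infty)\otimes_{\cO_E}\cO_{E'}\cong\Sel_i\bigl(A\otimes_{\cO_E}\cO_{E'}[G]/\Kinf\bigr)$.

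Next I would decompose the group ring. Since $|G|=p^j$ and $\mu_{p^j}\subset E'$, the $\cO_{E'}$-linear, $G$-equivariant map
\[
f:\bigoplus_{\psi\in\hat G}\cO_{E'}(\psi)\longrightarrow\cO_{E'}[G],\qquad 1_\psi\mapsto\sum_{g\in G}\psi^{-1}(g)\,g,
\]
and the Fourier map $\iota:\cO_{E'}[G]\to\bigoplus_\psi\cO_{E'}(\psi)$, $g\mapsto(\psi(g))_\psi$, satisfy $f\circ\iota=\iota\circ f=|G|\cdot\mathrm{id}$ by orthogonality of characters. Thus $f$ is injective with cokernel $C$ killed by $p^j$, hence finite. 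Because $A$ is $p$-divisible and cofinitely generated over $\cO_E$, applying $A\otimes_{\cO_E}-$ to $0\to\bigoplus_\psi\cO_{E'}(\psi)\xrightarrow{f}\cO_{E'}[G]\to C\to0$ gives $A\otimes_{\cO_E}C=0$ and $\mathrm{Tor}_1^{\cO_E}(A,C)$ finite, so there is a $G_{\Kinf}$-equivariant map
\[
\bigoplus_{\psi\in\hat G}A^\psi\longrightarrow A\otimes_{\cO_E}\cO_{E'}[G]
\]
with finite kernel and trivial cokernel, compatible with all the local conditions described above. Composing with the identification of the first paragraph produces the asserted natural map $\bigoplus_{\psi\in\hat G}\Sel_i(A^\psi/\Kinf)\to\Sel_i(A/F_\infty)\otimes_{\cO_E}\cO_{E'}$.

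Finally, finiteness of kernel and cokernel: letting $D$ denote the finite kernel above, one applies the snake lemma to the commutative diagram of short exact sequences defining the two Selmer groups, so that the kernel and cokernel of the displayed map are sandwiched between groups built from $H^0$ and $H^1$ of $D$ over $\Kinf$ and its completions, cut out by the signed local conditions; these are finite by the same argument as in \cite[Proposition~2.6]{pollackweston06}, with Corollary~\ref{cor:vanishH2} and Theorem~\ref{thm:finiteindex} (absence of proper $\Lambda$-submodules of finite index) playing the role of the corresponding inputs there. \textbf{The main obstacle} is the step in the first paragraph matching the signed local conditions at the primes above $p$: away from $p$ and for the group-ring bookkeeping the argument is verbatim that of Pollack--Weston, and once Lemma~\ref{lem:compatible-local-condition} is invoked the remaining verifications are routine.
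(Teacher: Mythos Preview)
Your overall strategy---Shapiro's lemma plus the decomposition of $\cO_{E'}[G]$---is exactly the Pollack--Weston argument that the paper invokes, and you are right that the only genuinely new ingredient in the present setting is the compatibility of the signed local conditions at $p$, which is supplied by Lemma~\ref{lem:compatible-local-condition}. So the architecture of your proof matches the paper's.

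The discrepancy is in the finiteness step. You bound the kernel and cokernel by cohomology of the finite module $D$ over $\Kinf$ and invoke Corollary~\ref{cor:vanishH2} and Theorem~\ref{thm:finiteindex}. But $H^1_\Sigma(\Kinf,D)$ for a finite $D$ is typically \emph{not} finite (e.g.\ $H^1(\Kinf,\mu_p)$), so the snake-lemma bounds alone do not yield finiteness, and Theorem~\ref{thm:finiteindex} (absence of proper finite-index $\Lambda$-submodules) does not obviously help here. The paper instead cites Proposition~\ref{prop:tor-F}: once one knows $\Sel_i(A/F_\infty)$ is $\Lambda$-cotorsion with $\mu=0$, its tensor with $\cO_{E'}$ is cofinitely generated over $\cO_{E'}$; since your map becomes an isomorphism after inverting $p$ (because $E'[G]\cong\bigoplus_\psi E'(\psi)$ exactly), a map of cofinitely generated $\cO_{E'}$-modules that is an isomorphism on maximal divisible subgroups has finite kernel and cokernel. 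That is the input Pollack--Weston use, and it is what you should substitute for your appeal to Theorem~\ref{thm:finiteindex}. (Corollary~\ref{cor:vanishH2} is also not the right hook: it concerns $H^2$ of the divisible $\Afs$, not of the finite $D$, and its proof already relies on \textbf{(tor)}-type input.)
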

\begin{proof}Using Proposition~\ref{prop:tor-F}, the proof of \textit{loc. cit.} carries over verbatim.
%As $\cO_{E'}$-modules, we have $A \otimes_{\cO_E} \cO_{E'} \cong A^\psi$, so by Lemma \ref{lem:compatible-local-condition} it follows that
%\[\left( \Sel_i(A / F_\infty) \otimes_{\cO_E} \cO_{E'}(\psi) \right)^G = \Sel_i(A^\psi / F_\infty)^G.\]
%For any cofinitely-generated $\cO_E[G]$-module $S$, the natural map
%\[\sum_{\psi \in \hat{G}} \left(S \otimes \cO_{E'}(\psi)\right)^G \rightarrow S \otimes \cO_{E'}\]
%has finite kernel and cokernel. Combining these facts, we have a map
%\[\sum_{\psi \in \hat{G}} \Sel_i(A^\psi/ F_\infty )^G \rightarrow \Sel_i(A^\psi) \otimes \cO_E'\]
%with finite kernel and cokernel. The proposition now follows upon applying Lemma~\ref{lem:control-base-field} to each twist $A^\psi$.
\end{proof}

For each $\psi\in\hat G$, we write $\lambda(A^\psi,\Kinf)$ for the $\lambda$-invariant of $\Sel_i(A^\psi / \Kinf)$, and similarly for Selmer groups over $F_\infty$. The following is an immediate corollary.

\begin{corollary}\label{cor:lambda-sum}
If $\Sel_i(A / \Kinf)$ is $\Lambda$-cotorsion with zero $\mu$-invariant, then $\Sel_i(A^\psi / \Kinf)$ is $\Lambda'$-cotorsion with zero $\mu$-invariant for each $\psi \in \hat{G}$. Moreover, if $G$ is abelian, then we also have
\[
\lambda(A, F_\infty) = \sum_{\psi \in \hat{G}} \lambda(A^\psi, \Kinf).
\]
\end{corollary}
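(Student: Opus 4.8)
I would derive both assertions from Proposition~\ref{prop:character-control}, combined with Proposition~\ref{prop:tor-F}: the ``moreover'' clause is essentially a corank count, while the first assertion needs a short preliminary reduction to the case where $G$ is abelian.

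First suppose $G$ is abelian. By Proposition~\ref{prop:tor-F} the module $\Sel_i(A/F_\infty)$ is $\Lambda$-cotorsion with vanishing $\mu$-invariant, so (as $\cO_{E'}$ is free of finite rank over $\cO_E$) the $\Lambda'$-module $\Sel_i(A/F_\infty)\otimes_{\cO_E}\cO_{E'}$ is again cotorsion with $\mu=0$, and its $\lambda$-invariant, now measured as an $\cO_{E'}$-corank, equals the $\cO_E$-corank of $\Sel_i(A/F_\infty)$, hence $\lambda(A,F_\infty)$. Proposition~\ref{prop:character-control} provides a $\Lambda'$-linear map $\bigoplus_{\psi\in\hat G}\Sel_i(A^\psi/\Kinf)\to\Sel_i(A/F_\infty)\otimes_{\cO_E}\cO_{E'}$ with finite kernel and cokernel; since a pseudo-isomorphism preserves cotorsion-ness and the $\mu$- and $\lambda$-invariants, the source is $\Lambda'$-cotorsion with $\mu=0$, hence so is each summand $\Sel_i(A^\psi/\Kinf)$, and comparing $\cO_{E'}$-coranks gives $\sum_{\psi\in\hat G}\lambda(A^\psi,\Kinf)=\lambda(A,F_\infty)$.

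To remove the abelian hypothesis from the first assertion, fix $\psi\in\hat G$. Since $\Aut(G)$ is finite, the conjugation action of $\Gal(F_\infty/\QQ)$ on $G$ has finite image, so the normal core $G_1\subseteq\ker\psi$ of $\ker\psi$ in $\Gal(F_\infty/\QQ)$ has finite index in $G$, and $G/G_1$ embeds into a finite product of copies of the cyclic group $\image(\psi)$, hence is abelian. Because $G_1$ and $\Gal(F_\infty/F)$ are both normal in $\Gal(F_\infty/\QQ)$ (the latter since $F/\QQ$ is Galois), the fixed field $F':=F^{G_1}$ is a number field, Galois over $\QQ$, with $\QQ\subseteq F'\subseteq F$, in which $p$ remains unramified; one checks that $F'_\infty=F'\Kinf$ and $\Gal(F'_\infty/\Kinf)\cong G/G_1$, through which $\psi$ factors. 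Applying the abelian case just proved to $F'$ in place of $F$ shows that $\bigoplus_\eta\Sel_i(A^\eta/\Kinf)$, the sum over characters $\eta$ of $\Gal(F'_\infty/\Kinf)$ (among which is $\psi$), is $\Lambda'$-cotorsion with $\mu=0$; hence so is its summand $\Sel_i(A^\psi/\Kinf)$.

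\textbf{Main obstacle.} The only step that is not pure formalism is this last reduction: producing $F'$ and verifying that it meets the hypotheses under which Propositions~\ref{prop:tor-F} and~\ref{prop:character-control} were established --- chiefly that $p$ stays unramified in $F'$ and that $\Gal(F'_\infty/\Kinf)$ is abelian. Everything else --- stability of $\Lambda$-cotorsion-ness and of the $\mu$-invariant under $\otimes_{\cO_E}\cO_{E'}$, under passage to direct summands, and under pseudo-isomorphism, together with invariance of $\cO_{E'}$-coranks under maps with finite kernel and cokernel --- is routine.
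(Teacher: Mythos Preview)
Your argument is correct and matches the paper's (unstated) derivation from Propositions~\ref{prop:tor-F} and~\ref{prop:character-control}; the paper offers no proof beyond declaring the result an ``immediate corollary''. Your non-abelian reduction can be simplified: since $p$ is unramified in $F$ and totally ramified in $\Kinf$, one has $F\cap\Kinf=\QQ$ and hence $\Gal(F_\infty/\QQ)\cong G\times\Gamma_0$, so $\Gamma_0$ centralises $G$ and the normal core of $\ker\psi$ in $\Gal(F_\infty/\QQ)$ is $\ker\psi$ itself (already normal in $G$); you may therefore take $F'=F^{\ker\psi}$ directly, with $\Gal(F'_\infty/\Kinf)\cong\image(\psi)$ cyclic.
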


\subsection{Kida formula}

We retain the notation from the previous section. If $L$ is a finite extension of the cyclotomic $\Zp$-extension of $\QQ_\ell$, then by restriction we may view any $E$-linear representation $V$ of $G_\QQ$ as a representation of $G_L$, and we set
\[
m_L(V)=\mathrm{dim}_E (V_{I_L})^{G_L} .
\]

\begin{remark}\label{rem:m-description}
In the notation of \S \ref{section:parity}, we have $m_{\Kinf,v}(A) = \tau_{v,f}(s,\theta)$ by \cite[Proposition 2]{Gr89}. A similar description can be made of $m_{F_\infty,v}(A)$.
\end{remark}

Noting that $m_L(V)$ is invariant under extension of scalars, if $L' / L$ is a finite $p$-extension, we enlarge $E$ if necessary so that it contains the $[L':L]^{\mathrm{th}}$ roots of unity, and we define
\[
m(L' / L,V) = \displaystyle\sum_{\psi \in \mathrm{Gal(L'/L)}} m_L(V)-m_L(V(\psi)).
\]

Denote by $R(F_\infty / \Kinf)$ the set of prime-to-$p$ places of $F_\infty$ which are ramified in $F_\infty / \Kinf$. The results of this paper yield the following ``Kida-type formula'', which is a generalization of \cite[Theorem 2.8]{pollackweston06} to the non-ordinary setting.

\begin{theorem}\label{thm:kida}
Let $F/\QQ$ be a finite Galois $p$-extension which is unramified at $p$. If $\Sel_i(A / K_\infty)$ is $\Lambda$-cotorsion with $\mu$-invariant zero, then $\Sel_i(A/ F_\infty)$ is also $\Lambda$-cotorsion with $\mu$-invariant zero. The corresponding $\lambda$-invariants are related by the transition formula
\[
\lambda(A, F_\infty) = p^j \cdot \lambda(A, \Kinf) + \sum_{v \in R(F_\infty / \Kinf)} m(F_{\infty,v} / K_{\infty,\ell}, A)
\]
\end{theorem}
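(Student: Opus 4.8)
The plan is to follow the strategy of Pollack--Weston \cite{pollackweston06}: reduce to a single cyclic degree-$p$ step, handle that case via the character decomposition of Proposition~\ref{prop:character-control}, and then match the resulting local contributions with the $m$-terms in the statement. First, since $p$ is unramified in $F$ while every nontrivial layer $K_m$ of $\Kinf/\QQ$ is totally ramified at $p$, we have $F\cap\Kinf=\QQ$, so $G:=\Gal(F_\infty/\Kinf)\cong\Gal(F/\QQ)$ has order $p^j$. As $\Gal(F/\QQ)$ is a $p$-group it has a chief series, which yields a tower $\QQ=F_0\subset F_1\subset\cdots\subset F_j=F$ with each $F_k/\QQ$ Galois, each $F_{k+1}/F_k$ cyclic of degree $p$, and each $F_k$ unramified at $p$ (so $[(F_{k+1})_\infty:(F_k)_\infty]=p$). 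The apparatus of \S\S\ref{section:signedselmer}--\ref{section:parity} carries over with $\QQ$ replaced by any such $F_k$, as in \S\ref{sec:urselmer}; Proposition~\ref{prop:tor-F}, applied inductively up the tower, shows each $\Sel_i(A/(F_k)_\infty)$ is $\Lambda$-cotorsion with vanishing $\mu$-invariant, which already gives the first assertion of the theorem. It then suffices to prove the transition formula for one degree-$p$ step and iterate: the weights $p^{j-1-k}$ that appear on iterating reassemble into $p^j\lambda(A,\Kinf)$ plus $\sum_{v\in R(F_\infty/\Kinf)}m(F_{\infty,v}/K_{\infty,\ell},A)$ by the additivity of the ramification term along $\Zp$-cyclotomic towers, exactly as in \cite[\S\S2--3]{pollackweston06} and \cite{hachimori-matsuno}. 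So we may assume $G\cong\ZZ/p$.

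For the degree-$p$ case, fix $E'\supseteq E$ containing $\mu_p$. By Corollary~\ref{cor:lambda-sum} each $\Sel_i(A^\psi/\Kinf)$ for $\psi\in\hat G$ is cotorsion with zero $\mu$-invariant, and
\[
\lambda(A,F_\infty)=\sum_{\psi\in\hat G}\lambda(A^\psi,\Kinf)=\lambda(A,\Kinf)+\sum_{\psi\ne\mathbf1}\lambda(A^\psi,\Kinf),
\]
so the task is to compute $\lambda(A^\psi,\Kinf)-\lambda(A,\Kinf)$ for each nontrivial $\psi$. Since $\psi$ has $p$-power order it is $\equiv\mathbf1$ modulo the maximal ideal of $\cO_{E'}$, so $A^\psi$ and $A\otimes_{\cO_E}\cO_{E'}$ have isomorphic residual representations. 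Enlarging $\Sigma$ to contain the (finitely many, prime-to-$p$) primes ramified in $F_\infty/\Kinf$ and putting $\Sigma_0=\Sigma\setminus\{p,\infty\}$, the congruence arguments of \S\ref{section:invariants} (Propositions~\ref{prop:torsionSel} and~\ref{prop:congSel}, run as in the proof of Theorem~\ref{thm:sigma-invariants} and using $\mu=0$) identify the non-primitive $\lambda$-invariants of $A$ and $A^\psi$; combined with Corollary~\ref{cor:selmerquotient} this gives
\[
\lambda(A^\psi,\Kinf)-\lambda(A,\Kinf)=\sum_{\ell\in\Sigma_0}\bigl(\delta_\ell(A)-\delta_\ell(A^\psi)\bigr),
\]
where $\delta_\ell(-)$ denotes the $\lambda$-invariant of $\prod_{v\mid\ell}H^1_{/i}(K_{\infty,v},-)$, equal to $\sum_{v\mid\ell}m_{K_{\infty,v}}(-)$ by Remark~\ref{rem:m-description}.

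Summing over all $\psi\in\hat G$ (and rewriting $\lambda(A,\Kinf)+(p-1)\lambda(A,\Kinf)$ as $p\lambda(A,\Kinf)$) gives
\[
\lambda(A,F_\infty)=p\,\lambda(A,\Kinf)+\sum_{\ell\in\Sigma_0}\sum_{\psi\in\hat G}\bigl(\delta_\ell(A)-\delta_\ell(A^\psi)\bigr).
\]
For $\ell$ unramified in $F_\infty/\Kinf$, twisting by the unramified, $p$-power-order character $\psi$ multiplies the Frobenius eigenvalues on $A_{I_\ell}$ by a $p$-power root of unity, which permutes the eigenvalues lying in $\mu_{p^\infty}$ and hence leaves $m_{K_{\infty,v}}(-)$ unchanged; thus $\delta_\ell(A^\psi)=\delta_\ell(A)$ and such $\ell$ contribute nothing. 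For $\ell$ ramified in $F_\infty/\Kinf$ --- necessarily totally ramified, as $G\cong\ZZ/p$ --- each place $v_0\mid\ell$ of $\Kinf$ has a unique place $v\mid v_0$ of $F_\infty$ with $\Gal(F_{\infty,v}/K_{\infty,v_0})=G$, so by definition $m(F_{\infty,v}/K_{\infty,v_0},A)=\sum_{\psi\in\hat G}\bigl(m_{K_{\infty,v_0}}(A)-m_{K_{\infty,v_0}}(A(\psi))\bigr)$, and summing over $v_0\mid\ell$ yields $\sum_{\psi\in\hat G}(\delta_\ell(A)-\delta_\ell(A^\psi))$. Since $R(F_\infty/\Kinf)$ is precisely the set of places of $F_\infty$ lying above such ramified $\ell$, the displayed sum collapses to $\sum_{v\in R(F_\infty/\Kinf)}m(F_{\infty,v}/K_{\infty,\ell},A)$, which proves the formula when $[F:\QQ]=p$; the tower argument then finishes the general case.

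The conceptual ingredients --- persistence of cotorsion and $\mu=0$, and the equality of non-primitive $\lambda$-invariants for congruent representations --- are already packaged in Propositions~\ref{prop:tor-F}, \ref{prop:congSel} and~\ref{prop:character-control}, so I expect the main obstacles to be technical rather than conceptual: namely (i) checking carefully that the full machinery of \S\S\ref{section:signedselmer}--\ref{section:parity} (the Poitou--Tate sequences, the non-primitive comparison, and the hypotheses \textbf{(tor)} and \textbf{($\boldsymbol{\mu=0}$)}) genuinely transfers verbatim to an arbitrary unramified base field $F_k$, and (ii) the local bookkeeping in the last step --- the eigenspace cancellation at unramified primes and the tower-additivity of $\sum_v m(F_{\infty,v}/K_{\infty,\ell},A)$ when one iterates over the $j$ layers --- which is where one must be most careful, although it proceeds as in \cite{pollackweston06,hachimori-matsuno}.
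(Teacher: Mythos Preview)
Your proposal is correct and follows essentially the same strategy as the paper's proof: reduce to the cyclic degree-$p$ case (the paper invokes \cite[Lemma~2.9]{pollackweston06} for this, where you argue via a chief series), apply Corollary~\ref{cor:lambda-sum} to decompose $\lambda(A,F_\infty)$ as $\sum_{\psi}\lambda(A^\psi,\Kinf)$, use the congruence $A^\psi[\vpi']\cong A[\vpi']$ together with Proposition~\ref{prop:congSel} to equate the non-primitive $\lambda$-invariants, and then identify the local discrepancy with the $m$-terms via Remark~\ref{rem:m-description}. The paper's proof is terser---it simply says ``the formula follows from Proposition~\ref{prop:congSel} and Remark~\ref{rem:m-description}''---whereas you spell out the local bookkeeping (the vanishing of the contribution at primes unramified in $F_\infty/\Kinf$, and the matching of $\sum_\psi(\delta_\ell(A)-\delta_\ell(A^\psi))$ with $\sum_{v\mid\ell}m(F_{\infty,v}/K_{\infty,\ell},A)$ at ramified primes), but the underlying argument is the same.
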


\begin{proof} By \cite[Lemma 2.9]{pollackweston06} we may assume $F_\infty / \Kinf$ is cyclic of degree $p$. In light of Corollary \ref{cor:lambda-sum}, the first part of the theorem is already proved, and we have
\[
\lambda(A, F_\infty) = \sum_{\psi \in \hat{G}} \lambda(A^\psi, \Kinf).
\]
By our choice of $E'$, each $\psi \in \hat{G}$ is trivial modulo a uniformizer $\vpi'$ of $\cO_{E'}$. It follows that we have an isomorphism $A^\psi[\vpi'] \cong A[\vpi]$, and the formula follows from Proposition \ref{prop:congSel} and Remark \ref{rem:m-description}.
\end{proof}

\begin{remark}If $F / \QQ$ is a finite extension which is unramified at $p$, and if $F' / F$ is any finite $p$-extension which is unramified at $p$, then upon replacing the pair $(\QQ,F)$ with the pair $(F,F')$, the results of this section go through in this slightly more general setting.\end{remark}

\section{Hypotheses and Numerical Computations}\label{section:example}

At the beginning of this paper, we assumed $p$ is an odd prime, and we have considered modular forms $f,g$ of level $N$ and even weight $k \geq 2$ whose Fourier coefficients define totally real fields. At various points in the paper, we have added additional assumptions on our modular forms. For the reader's convenience, we begin by recalling these hypotheses, and when applicable we also give some sufficient conditions for each to hold.
%List hypotheses, discuss situations in which they hold, discuss possibility of failure.

\begin{center}
\begin{tabular}{ |p{10ex}|p{50ex}|p{25ex}| }
 \hline
 \textbf{Label} & \textbf{Hypothesis} & \textbf{Sufficient Conditions} \\
 \hline
 \hline
\textbf{(irred)} & The $G_\QQ$-representation $T_f/\vpi T_f$ is irreducible.& See Lemma \ref{lem:irred} below. \\
\hline
 \textbf{(inv)} & For all $m \geq 0$,  $A_f(m)^{G_{\Qpinf}}=0$. & $k \leq p$ \\
 \hline
 \textbf{(tor)}& $\Sel_i(A_{f,s}/\Qinf)^\theta$ is cotorsion over $\Lambda$ & $a_p(f)=0$ or $k \geq 3$ \\
 \hline
 \textbf{(BLZ)}& $\ord_p(a_p(f)),\ord_p(a_p(g))>\lfloor(k-2)/(p-1)\rfloor$ & \\
 \hline
 \textbf{(cong)}& $T_f/\vpi T_f\cong T_g/\vpi T_g$ as $G_\QQ$-representations. &  Sturm's bound\\
 \hline
\textbf{($\boldsymbol{\mu=0}$)}& The $\mu$-invariants of $\Sel_i(A_f/\Kinf)$ and $\Sel_i(A_g/\Kinf)$ vanish. & Conjecturally: $k=2$\\
 \hline
\end{tabular}
\end{center}
\vspace{3ex}

The following lemma is useful when checking whether $\textbf{(irred)}$ is satisfied.
\begin{lemma}\label{lem:irred}
Let $f=\sum a_n(f) q^n$ be a newform of weight $k$ and level $N$. Let $\vpi \mid p$ be a prime such that the $G_\QQ$-representation $T_f/\vpi T_f$ is reducible. If $p \nmid N$ and $p > k$, then
\[
a_\ell(f) \equiv \ell^{k-1} + 1 \mod \vpi\ \text{for all}\ \ell \equiv 1 \mod N.
\]
\end{lemma}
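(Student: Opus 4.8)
The plan is to exploit the reducibility of $\bar\rho = T_f/\vpi T_f$ to pin down its semisimplification, and then read off the trace of Frobenius at primes $\ell \equiv 1 \bmod N$. Since $\bar\rho$ is reducible, its semisimplification is a direct sum $\bar\chi_1 \oplus \bar\chi_2$ of two characters $G_\QQ \to k_E^\times$. These characters are unramified outside $Np$ (because $f$ has level $N$ and, by hypothesis, $p \nmid N$), so they factor through the Galois group of the maximal abelian extension of $\QQ$ unramified outside $Np$. The determinant relation $\det\bar\rho = \bar\varepsilon\,\bar\chi_{\mathrm{cyc}}^{k-1}$ (where $\bar\chi_{\mathrm{cyc}}$ is the mod $p$ cyclotomic character and $\bar\varepsilon$ the reduction of the nebentypus, which is trivial for a form of level $N$ with trivial nebentypus — or more generally has conductor dividing $N$) gives $\bar\chi_1\bar\chi_2 = \bar\varepsilon\,\bar\chi_{\mathrm{cyc}}^{k-1}$.

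The key step is to control the ramification of $\bar\chi_1$ and $\bar\chi_2$ at $p$. Here I would invoke the hypothesis $p > k$ together with the fact that $f$ is non-ordinary at $p$ — wait, actually in the present lemma $f$ need not be non-ordinary, but we do have $p > k$, so the weight lies in the Fontaine–Laffaille range; crystallinity of $\rho_f|_{G_{\Qp}}$ with Hodge–Tate weights $\{0, k-1\}$ and the bound $k-1 < p-1$ force, via Fontaine–Laffaille theory, that on inertia $I_p$ the characters $\bar\chi_1, \bar\chi_2$ are (in some order) the trivial character and $\bar\chi_{\mathrm{cyc}}^{k-1}|_{I_p}$. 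Away from $p$, each $\bar\chi_j$ is a character of conductor dividing $N$. Therefore $\bar\chi_1$ agrees on inertia at every prime with either $\mathbf{1}$ or $\bar\chi_{\mathrm{cyc}}^{k-1}$ times a character of $(\ZZ/N\ZZ)^\times$. Consequently, for a prime $\ell$ with $\ell \equiv 1 \bmod N$ (hence $\ell \nmid Np$), the Dirichlet-character-at-$N$ parts evaluate trivially, so $\bar\chi_1(\mathrm{Frob}_\ell)$ and $\bar\chi_2(\mathrm{Frob}_\ell)$ are $\{1, \ell^{k-1}\}$ in $k_E$; summing gives
\[
a_\ell(f) \equiv \mathrm{tr}\,\bar\rho(\mathrm{Frob}_\ell) = \bar\chi_1(\mathrm{Frob}_\ell) + \bar\chi_2(\mathrm{Frob}_\ell) \equiv \ell^{k-1} + 1 \pmod{\vpi}.
\]

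The main obstacle I expect is justifying the precise shape of $\bar\chi_1|_{I_p}$ and $\bar\chi_2|_{I_p}$: this is where the hypothesis $p > k$ does the real work, and one must be careful that the Fontaine–Laffaille (equivalently, Serre's results on the image of inertia on reducible modular mod $p$ representations in low weight) argument genuinely forces the inertia-at-$p$ characters to be $\mathbf{1}$ and $\bar\chi_{\mathrm{cyc}}^{k-1}$ rather than some other factorization of $\bar\chi_{\mathrm{cyc}}^{k-1}$ on tame inertia; a cleaner route may be to cite the structure of reducible modular residual representations (e.g. the arguments around Eisenstein congruences, or \cite[\S2]{epw}-type input) directly. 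Once the ramification at every place is understood, the evaluation at $\mathrm{Frob}_\ell$ for $\ell \equiv 1 \bmod N$ is immediate from Chebotarev and the congruence $a_\ell(f) \equiv \mathrm{tr}\,\bar\rho(\mathrm{Frob}_\ell)$, which holds because $\bar\rho$ and its semisimplification have the same trace.
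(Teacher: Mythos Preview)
Your proposal is correct and is precisely the standard argument; the paper itself does not give a self-contained proof but simply refers the reader to \cite[Lemma~2.4]{DFG}, whose proof proceeds exactly along the lines you sketch (semisimplify to $\bar\chi_1\oplus\bar\chi_2$, use $p>k$ and Fontaine--Laffaille to force $\{\bar\chi_1|_{I_p},\bar\chi_2|_{I_p}\}=\{\mathbf{1},\bar\chi_{\mathrm{cyc}}^{k-1}\}$, observe each $\bar\chi_j$ is then a power of $\omega$ times a Dirichlet character of conductor dividing $N$, and evaluate at $\mathrm{Frob}_\ell$ for $\ell\equiv 1\bmod N$). Your identification of the Fontaine--Laffaille step as the place where $p>k$ is genuinely used is exactly right, and your concern about ``some other factorization of $\bar\chi_{\mathrm{cyc}}^{k-1}$ on tame inertia'' is unfounded: since the semisimplification commutes with restriction, $\bar\chi_1|_{I_p}\oplus\bar\chi_2|_{I_p}$ is the semisimplification of $\bar\rho|_{I_p}$, which Fontaine--Laffaille pins down directly.
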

\begin{proof}
See the proof of \cite[Lemma 2.4]{DFG}.
\end{proof}

The explicit nature of the terms in Corollary \ref{cor:parity} make them highly amenable to computation using a computer algebra system such as Magma \cite{Magma}, and it is easy to compute many examples which verify this result. The code used by the authors to verify the following examples can be found at \cite{MagmaCode}.

 It is also interesting to test which of the above hypotheses are really necessary; for instance, computational experiments suggest that the theorem still holds when $k=2$ and $a_p(f) \neq 0$, which suggests that \textbf{(tor)} also holds in this setting.

\begin{remark} From the table, we see that the most interesting hypothesis (and the most difficult to check) is certainly \textbf{($\boldsymbol{\mu=0}$)}. As noted earlier, when $f$ is weight $2$ and nonordinary at $p$, its $\mu$-invariant is always expected to vanish, but the higher-weight cases are more mysterious. See for example \cite[Remark 5.2.3 and 5.2.4]{pollackweston11b}.
\end{remark}

\begin{example}

Let $p=5$ and $k=4$, so \textbf{(inv)} and \textbf{(tor)} are automatically satisfied. Consider the modular forms
\[
f = q - 8q^3 - 10q^5 - 16q^7 + 37q^9 + 40q^{11} + O(q^{12}) \in S_4(\Gamma_0(32)) \quad \text{[LMFDB Label 32.4.1.a]}
\]
and
\[
g = q - 3q^3 + 10q^5 - 4q^7 + 9q^9 + 20q^{11} + O(q^{12}) \in S_4(\Gamma_0(96))\quad \text{[LMFDB Label 96.4.1.c]}
\]
Both have trivial nebentypus and are defined over $\QQ$. It is plain to see that \textbf{(BLZ)} is satisfied, and one easily checks that these forms are congruent mod $5$ using Sturm's bound. Applying Lemma \ref{lem:irred} with $\ell = 97$ shows that \textbf{(irred)} holds.

Since $k=4$, we set $s=\frac{k}{2}-1=1$, and the levels give us $\Sigma_0 = \{ 2,3 \}$. We compute the following data to check Corollary \ref{cor:parity}. Note that, in the notation of Section \ref{section:parity}, $\epsilon = \theta = 1$.

\begin{table}[!ht]
  \centering
  \begin{tabular}{|c|c|c|}
    \cline{2-3}
    \multicolumn{1}{c|}{} & $f$ & $g$ \\ \hline
    $a_2$ & $0$ & $0$ \\ \hline
    $a_2 - 2^{s}$ \rule{0pt}{3ex}    & $3 \not\equiv 0 \mod 5$ & $3 \not\equiv 0 \mod 5$ \\  \hline
    $a_3$ & $-8$ & $-3$\\  \hline
    $a_3 - 3^{s}$\rule{0pt}{3ex}    & N/A & $-6 \not\equiv 0 \mod$ \\ \hline
    $a_3 - 3^{s+1} - 3^{k-s-2}$ \rule{0pt}{3ex}& $-20 \equiv 0 \mod 5$ & N/A \\ \hline
    $3^{2s-k+3} - 1$ \rule{0pt}{3ex}& $2 \not\equiv 0 \mod 5$  & N/A \\ \hline
    $\cS$ & $\{3\}$ &  $\emptyset$ \\ \hline
    Analytic Rank & $1$ & $0$\\ \hline
  \end{tabular}
\end{table}

\vspace{3ex}

Thus we have $|\cS_f|=1, |\cS_g| = 0$, $r_\mathrm{an}(f)=1$, and $r_\mathrm{an}(g)=0$, so
\[
|\cS_f| + r_\mathrm{an}(f) \equiv  |\cS_g| + r_\mathrm{an}(g) \mod 2.
\]
Since $f$ has positive analytic rank while $g$ does not, this example illustrates the role that the ``error terms'' $|\cS_f|$ and $|\cS_g|$ play in the congruence of Corollary \ref{cor:parity}.
\end{example}

We have verified our theorem on many pairs of modular forms; the major difficulty in computing these examples is in generating the pairs of congruent modular forms. Once a pair is in hand, computing the terms in Corollary \ref{cor:parity} is extremely fast. We have yet to find a pair of congruent modular forms which violates our theorem, which can be considered evidence for the vanishing of the $\mu$-invariant for each of these forms.

In the appendix, we present a table of the pairs of congruent modular forms for which we have computationally verified Corollary \ref{cor:parity} using the code \cite{MagmaCode}. Modular forms are listed according to their LMFDB label.

\section*{Acknowledgements} It is a pleasure to thank Rob Harron, Byoung Du Kim, Chan-Ho Kim, and Florian Sprung for their helpful conversations regarding an earlier version of this paper. We are especially grateful to Rob Pollack for pointing out an error in a previous version of this paper. We also thank the referee for helpful comments and suggestions that have been incorporated into the final version of this paper.

\section*{Appendix: Tables of Data}

\begin{table}[!ht]
\caption{Pairs $(f,g)$ for $p=3$, $k=2$}
  \begin{tabular}{cc}

  \begin{tabular}{|c|c|}
\hline
    17.2.1.a & 119.2.1.b  \\ \hline
     & 187.2.1.b \\ \hline
     & 187.2.1.c \\ \hline \hline

     26.2.1.b & 52.2.1.a \\ \hline
     & 182.2.1.c \\ \hline
     & 182.2.1.d \\ \hline \hline

     40.2.1.a & 200.2.1.a	\\ \hline \hline

     46.2.1.a & 92.2.1.a \\ \hline \hline

     52.2.1.a & 182.2.1.c	\\ \hline
     & 182.2.1.e	\\ \hline \hline

     58.2.1.a & 116.2.1.a \\ \hline \hline

     62.2.1.a & 124.2.1.b	\\ \hline \hline

     70.2.1.a & 140.2.1.b	\\ \hline \hline

     71.2.1.b	& 71.2.1.a	\\ \hline
     & 142.2.1.b	\\ \hline
     & 142.2.1.c	 \\ \hline
     & 142.2.1.d	 \\ \hline \hline

	71.2.1.a  & 142.2.1.b	\\ \hline
     & 142.2.1.c	 \\ \hline
     & 142.2.1.d	 \\ \hline

  \end{tabular}
&

  \begin{tabular}{|c|c|}
\hline
         73.2.1.a	& 73.2.1.c \\ \hline \hline

     73.2.1.c & 122.2.1.b	\\ \hline
     & 188.2.1.b	\\ \hline \hline

     77.2.1.a	& 154.2.1.a	\\ \hline
     & 154.2.1.c	 \\ \hline \hline

     94.2.1.a	& 188.2.1.b	 \\ \hline \hline

     119.2.1.b	& 187.2.1.b	\\ \hline
     &  187.2.1.c	\\ \hline \hline

     122.2.1.b	& 188.2.1.b	\\ \hline \hline

     142.2.1.b	& 142.2.1.c	\\ \hline
     & 142.2.1.d	\\ \hline \hline

	 142.2.1.c	& 142.2.1.d	\\ \hline \hline

	 154.2.1.a	& 154.2.1.c	\\ \hline \hline

	 163.2.1.a & 163.2.1.b	\\ \hline \hline

	 182.2.1.c	& 182.2.1.e	\\ \hline \hline

	 184.2.1.c & 184.2.1.d	\\ \hline \hline

	 187.2.1.b	 & 187.2.1.c \\ \hline \hline

	 200.2.1.c	& 200.2.1.e \\ \hline

  \end{tabular}

  \end{tabular}
  \end{table}

  \begin{table}[!ht]
\caption{Pairs $(f,g)$ for $p=5$, $k=2$}
  \begin{tabular}{c}

  \begin{tabular}{|c|c|}
\hline
    14.2.1.a	 & 182.2.1.b	  \\ \hline \hline

   37.2.1.b & 111.2.1.a	 \\ \hline \hline

   38.2.1.a	& 114.2.1.a	 \\ \hline \hline

   71.2.1.a	& 213.2.1.b	\\ \hline \hline

   92.2.1.b & 276.2.1.a	\\ \hline \hline

   197.2.1.a	& 197.2.1.b	\\ \hline \hline

   253.2.1.a	 & 267.2.1.e	\\ \hline \hline

   267.2.1.b	& 267.2.1.e	\\ \hline

  \end{tabular}

  \end{tabular}
  \end{table}

  \begin{table}[!ht]
\caption{Pairs $(f,g)$ for $p=5$, $k=4$}
  \begin{tabular}{c}

  \begin{tabular}{|c|c|}
\hline
    32.4.1.c	& 	96.4.1.c	  \\ \hline \hline

32.4.1.a	& 96.4.1.f	\\ \hline \hline

51.4.1.c	& 102.4.1.d	\\ \hline \hline

51.4.1.b	& 102.4.1.b	\\ \hline \hline

74.4.1.c	& 91.4.1.c	\\ \hline
& 118.4.1.d	 \\ \hline \hline

91.4.1.c	& 118.4.1.d	 \\ \hline \hline

118.4.1.a	&  118.4.1.d	\\ \hline

  \end{tabular}

  \end{tabular}
  \end{table}

\clearpage

\bibliographystyle{amsalpha}
\bibliography{references}
\end{document}